\newtheorem{thm}{Theorem}[section]
\newtheorem{pro}[thm]{Proposition} 
\newtheorem{rmk}[thm]{Remark} 
\newtheorem{ex}{Example}
\newtheorem{cor}[thm]{Corollary}
\newtheorem{lem}[thm]{Lemma}
\newtheorem{dfn}{Definition}
\title{Cross-connections of the singular transformation semigroup}
\author{P. A. Azeef Muhammed}
\address{School of Mathematics, IISER, Thiruvananthapuram, Kerala-695016, India.\footnote{Present address:\\
Institute of Mathematics and Computer Science,\\ 
Ural Federal University,\\ 
Lenina 51,\\ 
620000 Ekaterinburg, Russia.\\
E-mail address: a.a.parail@urfu.ru}}
\email{$azeefp@gmail.com$}
\author {A. R. Rajan}
\address{Department of Mathematics, University of Kerala, Thiruvananthapuram, Kerala-695581, India.}
\email{$arrunivker@yahoo.com$}
\keywords{Regular semigroup, cross-connections, normal category, transformation semigroup, powerset, partitions}
\subjclass[2010]{20M10, 20M17, 20M50}
\begin{document}

\maketitle

\begin{abstract}
Cross-connection is a construction of regular semigroups using certain categories called normal categories which are abstractions of the partially ordered sets of principal left (right) ideals of a semigroup. We describe the cross-connections in the semigroup $Sing(X)$ of all non-invertible transformations on a set $X$. The categories involved are characterized as the powerset category $\mathscr{P}(X)$ and the category of partitions $\Pi(X)$. We describe these categories and show how a permutation on $X$ gives rise to a cross-connection. Further we prove that every cross-connection between them is induced by a permutation and construct the regular semigroups that arise from the cross-connections. We show that each of the cross-connection semigroups arising this way is isomorphic to $Sing(X)$. We also describe the right reductive subsemigroups of $Sing(X)$ with the category of principal left ideals isomorphic to $\mathscr{P}(X)$. This study sheds light into the more general theory of cross-connections and also provides an alternate way of studying the structure of $Sing(X)$.
\end{abstract}

\section{Introduction}
In the development of the structure theory of regular semigroups, there are mainly two approaches. The first approach inspired by the work of W. D. Munn \cite{munn} for inverse semigroups uses the set of idempotents $E$ of a given regular semigroup to reconstruct the semigroup as a full subsemigroup of the semigroup of principal ideal isomorphisms of $E$. In this context, K. S. S. Nambooripad \cite{mem} abstractly characterized the set of idempotents of a (regular) semigroup as a \emph{(regular) biordered set}. It was later proved by D. Easdown \cite{eas} that given a biordered set $E$, there exists a semigroup $S$ such that the biordered set of $S$ is isomorphic to $E$.

The second approach initiated by T. E. Hall \cite{hall} uses the ideals of a given regular semigroup to analyse its structure. P. A. Grillet \cite{gril,gril1,gril2} refined Hall's theory to abstractly characterize the set of ideals as \emph{regular partially ordered sets} and constructed the fundamental image of the regular semigroup as a cross-connection semigroup. Again Nambooripad \cite{cross} generalized the idea to arbitrary regular semigroups by characterizing the set of ideals with appropriate morphisms as \emph{normal categories}. A \emph{cross-connection} between two normal categories determines a cross-connection semigroup and conversely every regular semigroup is isomorphic to a cross-connection semigroup for a suitable cross-connection.

Nambooripad's cross-connections are thus much more general than Grillet's and allows one to coordinatize arbitrary regular semigroups, and not only fundamental ones. As usual there is a price to pay: Nambooripad's theory involves rather a complicated system of notions and is more difficult to work with. Because of this and also of the circumstance that Nambooripad's treatise \cite{cross} was not easily available, the ideas of \cite{cross} have not found many applications so far.

In this paper, we present such an application describing the cross-connections of the singular part $Sing(X)$ of the full transformation semigroup on a set $X$, i.e., of the semigroup of all non-invertible  transformations of $X$. The purpose is twofold. On the one hand, we demonstrate how Nambooripad's theory works in a very concrete yet non-trivial situation. As the reader will see, all the abstract notions of the theory get rather transparent meanings in this case. On the other hand, we believe that our approach sheds some new light on the structure of $Sing(X)$. With each transformation $a \in Sing(X)$, its image Im $a$ and its kernel $\pi_a$ are connected, and naturally, the set $\mathscr{P}(X)$ of all non-empty proper subsets of $X$ and the set $\Pi(X)$ of all non-identity partitions of $X$ play an important role in understanding the structure of $Sing(X)$. (For instance, it is well known that $\mathscr{P}(X)$ and $\Pi(X)$ characterize Green's relations on $Sing(X)$ \cite{clif}.) Roughly speaking, our results show what structure should be imposed on $\mathscr{P}(X)$ and $\Pi(X)$ in order to make $Sing(X)$ become reconstructable from them. Clearly, both $\mathscr{P}(X)$ and $\Pi(X)$ come equipped with their natural order but one needs more ingredients, and we show that such additional ingredients can be comfortably expressed in the language of category theory.

The singular transformation semigroup $Sing(X)$ is a regular subsemigroup of the full transformation semigroup $\mathscr{T}_X$. Since every semigroup can be realized as a transformation semigroup, $\mathscr{T}_X$ and its subsemigroups have been studied extensively \cite{igd,howie1978,mstx,cfts}. $Sing(X)$ being a regular semigroup with a relatively nice structure makes a good candidate to describe some non-trivial cross-connections because it is not a monoid. (One can see that the cross-connection structure degenerates in the case of a regular monoid.) In the process, we identify the normal categories involved in the construction, their normal duals, some intermediary semigroups that arise, the cross-connections between the normal categories, and also the semigroups that arise from the cross-connections. The first author has done a similar analysis on the linear transformation semigroup \cite{tlx} in which, further, the construction yields an interesting structure theorem for the regular part of the so-called \emph{variants} of the semigroup, in the sense of I. Dolinka and J. East \cite{igd}.

The structure of the paper is as follows. In Section \ref{sccxn}, we discuss the relevant notions and definitions in the general theory of cross-connections. Although the \emph{abstract nonsense} is slightly bulky, it is necessary if one wants to place our results into a proper context and relate them to the general situation. A reader more interested in the application of the theory to $Sing(X)$ may skip this section since we provide a sort of restriction of the general theory to $Sing(X)$ in Section \ref{secpxc}. The normal categories arising from the singular transformation semigroup $Sing(X)$ are characterized as the powerset category and the category of partitions. We begin Section \ref{seccxn} by characterizing the \emph{normal duals}. We observe that the normal dual of the powerset category is isomorphic to the category of partitions, and vice versa, the normal dual of the partition category is isomorphic to the powerset category. We study some cross-connections between the powerset category and the partition category and describe how each permutation on the set $X$ gives rise to a cross-connection. We prove that \emph{every} cross-connection semigroup that arises from these categories is in fact induced by a permutation on $X$. In Section \ref{secrrs}, we conclude the paper by constructing the right reductive subsemigroups of $Sing(X)$ as cross-connection semigroups. 

\section{The theory of cross-connections}\label{sccxn}
In this section, we briefly describe the basic concepts and definitions in the general theory of cross-connections. It is based on \cite{cross}; and a more concise but a less detailed discussion on cross-connection theory is available at \cite{kvn}. We assume familiarity with the elementary ideas in semigroup theory and the basic definitions in category theory like functors, natural transformations etc. A reader unfamiliar with these notions may refer to \cite{clif,grillet,howie} for semigroup theory; and \cite{bucur,mac} for basic category theory. In this paper, all the functions are written in the order of their composition, i.e., from left to right (with a minor exception which shall be seen later). All the categories considered are \emph{small categories} and for a category $\mathcal C$, the set of objects of $\mathcal C$ is denoted by $v\mathcal C$ and the set of morphisms by $\mathcal C$ itself. Thus the set of all morphisms between objects $c,d \in v\mathcal{C}$ is denoted by $\mathcal{C}(c,d)$. Observe that since categories are treated as algebraic objects, to compare two categories, we will require the stronger notion of category isomorphism (category equivalence will not suffice).

Let $\mathcal{C}$ be a category and $\mathcal{P}$ be a subcategory of $\mathcal{C}$ such that $\mathcal{P}$ is a strict preorder with $v\mathcal{P} = v\mathcal{C}$. Then $(\mathcal{C} ,\mathcal{P})$ is called a \emph{category with subobjects}\index{category!with subobjects} if, first, every $f\in \mathcal{P}$ is a monomorphism in $\mathcal{C}$, second, and if $f=hg$ for $f,g\in \mathcal{P}$ and $h \in \mathcal{C}$, then $h\in \mathcal{P}$.

In a category $(\mathcal{C} ,\mathcal{P})$ with subobjects, if $f\colon c \to d $ is a morphism in $ \mathcal{P}$, then $f$ is said to be an \emph{inclusion}\index{inclusion}. We denote this inclusion by $j(c,d)$. 
An inclusion $j(c,d)$ \emph{splits} if there exists $e\in \mathcal{C}$ such that $j(c,d)e =1_c$. Then $e$ is called a \emph{retraction}. A morphism $u\colon c \to d$ is said to be an \emph{isomorphism}\index{isomorphism} if there exists a morphism $u^{-1}\colon d \to c$ such that $uu^{-1} = 1_c$ and $u^{-1}u = 1_d$. 
\begin{dfn}
A \emph{normal factorization}\index{factorization!normal} of a morphism $f \in \mathcal{C}(c,d)$ is a
factorization of the form $f=euj$ where $e\colon c\to c'$ is a retraction,
$u\colon c'\to d'$ is an isomorphism and $j=j(d',d)$ an inclusion where $c',d' \in v\mathcal{C}$ with $ c' \subseteq c$, $ d' \subseteq d$.\\
The morphism $f^\circ = eu$ is known as the \emph{epimorphic component}\index{epimorphic component!of a morphism} of the morphism $f$.
\end{dfn}
The below diagram illustrates a normal factorization $f=euj$ of $f\in\mathcal{C}(c,d)$.
\begin{equation*}\label{normalfact}
\xymatrixcolsep{5pc}\xymatrixrowsep{5pc}\xymatrix
{
 c' \ar@{.>}@<-3pt>[r]_{\subseteq} \ar@<-3pt>[d]_{u}   & c \ar@<-3pt>[l]_{e}\ar[d]^{f} \\       
 d' \ar@{.>}@<-3pt>[u]_{u^{-1}} \ar[r]^{\subseteq}_{j} & d 
}
\end{equation*}
A \emph{normal cone} in $\mathcal C$ is essentially a {nice} collection of morphisms in $\mathcal C$ associated with a distinguished vertex $d \in v\mathcal C$. 
\begin{dfn}\label{dfn1}
Let $\mathcal{C}$ be a category with subobjects and $d\in v\mathcal{C}$. A map $\gamma\colon v\mathcal{C}\to\mathcal{C}$ is called a \emph{cone}\index{cone} from the base $v\mathcal{C}$ to the vertex $d$ if $\gamma(c)\in \mathcal{C}(c,d)$ for all $c\in v\mathcal{C}$ and whenever $c\subseteq c'$ then $j(c,c')\gamma(c') = \gamma(c)$. A cone $\gamma$ with vertex $d$ is said to be \emph{normal}\index{cone!normal} if there exists $c\in v\mathcal{C}$ such that $\gamma(c)\colon c\to d$ is an isomorphism.
\end{dfn}
Given the cone $\gamma$, we denote by $c_{\gamma}$ the \emph{vertex}\index{vertex!of a cone} of
$\gamma$ and for each $c\in v\mathcal{C}$, the morphism $\gamma(c)\colon  c \to c_{\gamma}$
is called the \emph{component}\index{component!of a cone} of $\gamma$ at $c$. A normal cone $\gamma$ with vertex $d$ can be represented as follows.
\begin{equation*}\label{conediag}
\xymatrixrowsep{8pc}\xymatrixcolsep{3pc}\xymatrix
{
&&c_\gamma=d\\
c\ar[r]^\subseteq_{j(c,c')} \ar[rru]|-{\gamma(c)} &
c' \ar[ru]|-{\gamma(c')} &
d \ar@{--}[l] \ar[u]|-{\gamma(d)} &
q \ar@{--}[l] \ar[lu]|-{\gamma(q)} &
p \ar@{--}[l] \ar[llu]|-{\gamma(p)}
}
\end{equation*}
We define the \emph{M-set}\index{M-set} of a cone $\gamma$ as 
\begin{equation}
M\gamma = \{ c \in \mathcal{C}:\gamma(c)\text{ is an isomorphism} \}.
\end{equation} 
\begin{dfn}
A small category $\mathcal{C}$ with subobjects is called a \emph{normal category}\index{category!normal} if the following holds:
\begin{enumerate}
\item Any morphism in $\mathcal{C}$ has a normal factorization. 
\item Every inclusion in $\mathcal{C}$ splits.
\item For each $c \in v\mathcal{C} $ there is a normal cone $\gamma$ with vertex $c$ and $\gamma (c) = 1_c$.
\end{enumerate}
\end{dfn}
Observe that given a normal cone $\gamma$ and an epimorphism $f\colon c_\gamma \to d$, the map $\gamma*f \colon  a \mapsto\gamma(a)f$ from $v\mathcal{C}$ to $\mathcal{C}$ is a normal cone with vertex $d$. Hence, given two normal cones $\gamma$ and $\sigma $, we can compose them as follows.
\begin{equation} \label{eqnsg1}
\gamma \cdot \sigma = \gamma*(\sigma(c_\gamma))^\circ 
\end{equation} 
where $(\sigma(c_\gamma))^\circ$ is the epimorphic\index{epimorphic component!of a morphism} part of the morphism $\sigma(c_\gamma)$.

All the \emph{normal cones} in a normal category with this special binary composition form a regular semigroup $T\mathcal C$ known as the \emph{semigroup of normal cones} in $\mathcal C$. 

To describe cross-connections, Grillet \cite{gril} used the set of all \emph{normal} equivalence relations on a \emph{regular} partially ordered set. To extend this idea to normal categories, Nambooripad proposed the notion of a \emph{normal dual}. The normal dual $N^\ast\mathcal C$ of a normal category $\mathcal{C}$ is a full subcategory of the category $\mathcal{C}^\ast$ of all functors from $\mathcal C$ to $\bf{Set}$ \cite{mac}. The objects of $N^\ast\mathcal C$ are functors called $H$-functors and the morphisms are natural transformations between them. 

For each $\gamma \in T\mathcal{C}$, define the \emph{H-functor}\index{functor!H-} $H({\gamma};-)\colon  \mathcal{C}\to \mathbf{Set}$ on the objects and morphisms of $\mathcal{C}$ as follows. For each $c\in v\mathcal{C}$ and for each $g\in \mathcal{C}(c,d)$, we let
\begin{subequations} \label{eqnH11}
\begin{align}
H({\gamma};{c})&= \{\gamma\ast f^\circ : f \in \mathcal{C}(c_{\gamma},c)\} \text{ and }\\
H({\gamma};{g}) :H({\gamma};{c}) &\to H({\gamma};{d}) \text{ given by }\gamma\ast f^\circ \mapsto \gamma\ast (fg)^\circ
\end{align}
\end{subequations}
It can be shown that if $H({\gamma};-) = H({\gamma'};-)$, then the $M$-sets of the normal cones $\gamma$ and $\gamma'$ coincide; and hence we define the $M$-set of an $H$-functor as $MH(\gamma;-) = M\gamma$.
\begin{dfn}
If $\mathcal{C}$ is a normal category, the \emph{normal dual} $N^\ast \mathcal{C}$ is a category with
\begin{equation} \label{eqnH1}
v N^\ast \mathcal{C} = \{ H(\epsilon;-) \: : \: \epsilon \in E(T\mathcal{C}) \}.
\end{equation} 
A morphism in $N^*\mathcal{C}$ between two $H$-functors $H(\epsilon;-)$ and $H(\epsilon';-)$ is a natural transformation $\sigma$ as described in the following commutative diagram.
\begin{equation*}\label{Hfunct}
\xymatrixcolsep{4pc}\xymatrixrowsep{3pc}\xymatrix
{
 H(\epsilon;c) \ar[r]^{\sigma(c)} \ar[d]_{H(\epsilon;g)}  
 & H(\epsilon';c) \ar[d]^{H(\epsilon';g)} \\       
 H(\epsilon;d) \ar[r]^{\sigma(d)} & H(\epsilon';d) 
}
\end{equation*}
\end{dfn}
\begin{thm} \cite{cross} \label{thm1}
Let $\mathcal{C}$ be a normal category. $N^\ast \mathcal{C}$\index{normal dual} is a normal category such that for every morphism $\sigma \colon  H(\epsilon;-) \to H(\epsilon ';-)$ in $N^\ast \mathcal{C} $, there is a unique $\hat{\sigma} \colon  c_{\epsilon '} \to c_\epsilon$ in $\mathcal{C}$ where the component of the \emph{natural transformation}\index{natural transformation} $\sigma$ at $c$ is the map given by:
\begin{equation} \label{eqnH2}
\sigma(c) \colon  \epsilon \ast f^\circ \longmapsto \epsilon ' \ast (\hat{\sigma} f)^\circ. 
\end{equation}
\end{thm}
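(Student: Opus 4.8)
The plan is to deduce the morphism part of the statement from the Yoneda lemma, once one observes that every $H$-functor is a representable functor in disguise, and then to obtain normality of $N^{\ast}\mathcal{C}$ by transporting the normal structure of $\mathcal{C}$ across the resulting (contravariant) correspondence. Fix an idempotent cone $\epsilon\in E(T\mathcal{C})$; then $\epsilon(c_{\epsilon})=1_{c_{\epsilon}}$. The first step is to show that the assignment $\theta_{\epsilon}$ whose component at $c\in v\mathcal{C}$ sends $f\in\mathcal{C}(c_{\epsilon},c)$ to $\epsilon\ast f^{\circ}\in H(\epsilon;c)$ is a natural isomorphism from the representable functor $\mathcal{C}(c_{\epsilon},-)$ to $H(\epsilon;-)$. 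Surjectivity of each component is immediate from the definition of $H(\epsilon;c)$; injectivity follows because $(\epsilon\ast f^{\circ})(c_{\epsilon})=\epsilon(c_{\epsilon})f^{\circ}=f^{\circ}$ and because $f\mapsto f^{\circ}$ is already injective on $\mathcal{C}(c_{\epsilon},c)$ --- two morphisms with the same epimorphic component have the same image and are therefore followed by the same (unique) terminal inclusion; naturality is a one-line check against $H(\epsilon;g)\colon\epsilon\ast f^{\circ}\mapsto\epsilon\ast(fg)^{\circ}$.

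With these identifications, a natural transformation $\sigma\colon H(\epsilon;-)\to H(\epsilon';-)$ is exactly a natural transformation $\mathcal{C}(c_{\epsilon},-)\to\mathcal{C}(c_{\epsilon'},-)$, and by the Yoneda lemma such transformations are in bijection with $\mathcal{C}(c_{\epsilon'},c_{\epsilon})$, the one attached to $\hat\sigma$ having component $f\mapsto\hat\sigma f$. Transporting this back through $\theta_{\epsilon}$ and $\theta_{\epsilon'}$ gives precisely formula~\eqref{eqnH2}; conversely one checks directly that for any $g\in\mathcal{C}(c_{\epsilon'},c_{\epsilon})$ the prescription $\epsilon\ast f^{\circ}\mapsto\epsilon'\ast(gf)^{\circ}$ is well defined (again by injectivity of $f\mapsto\epsilon\ast f^{\circ}$) and natural, so that $\sigma\mapsto\hat\sigma$ is a bijection onto $\mathcal{C}(c_{\epsilon'},c_{\epsilon})$. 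Uniqueness of $\hat\sigma$ may also be seen without invoking Yoneda: evaluating~\eqref{eqnH2} at $c=c_{\epsilon}$ on $\epsilon=\epsilon\ast(1_{c_{\epsilon}})^{\circ}$ yields $\sigma(c_{\epsilon})(\epsilon)=\epsilon'\ast\hat\sigma^{\circ}$, which pins down $\hat\sigma^{\circ}$ and hence $\hat\sigma$.

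It remains to show that $N^{\ast}\mathcal{C}$, a priori only a full subcategory of the functor category $\mathcal{C}^{\ast}$, is a normal category, and for this I would carry the normal structure of $\mathcal{C}$ across the bijection $\sigma\leftrightarrow\hat\sigma$, keeping in mind that it is \emph{contravariant}. The non-obvious ingredient is the correct notion of inclusion in $N^{\ast}\mathcal{C}$: the inclusion $H(\epsilon;-)\to H(\epsilon';-)$ should be the morphism whose associated map in $\mathcal{C}$ is the retraction $\epsilon(c_{\epsilon'})\colon c_{\epsilon'}\to c_{\epsilon}$ (note $j(c_{\epsilon},c_{\epsilon'})\,\epsilon(c_{\epsilon'})=\epsilon(c_{\epsilon})=1_{c_{\epsilon}}$ when $c_{\epsilon}\subseteq c_{\epsilon'}$), available precisely when $c_{\epsilon}\subseteq c_{\epsilon'}$ and $\epsilon=\epsilon'\ast(\epsilon(c_{\epsilon'}))^{\circ}$. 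Granting that this relation is a strict preorder on $vN^{\ast}\mathcal{C}$ with the two defining properties of subobjects --- which follows from the corresponding facts in $\mathcal{C}$ together with the uniqueness of $\hat\sigma$ --- the contravariance interchanges inclusions of $\mathcal{C}$ with retractions of $N^{\ast}\mathcal{C}$ (and conversely) while fixing isomorphisms, so a normal factorization $\hat\sigma=euj$ in $\mathcal{C}$ transports to a factorization of $\sigma$ of the form (retraction)(isomorphism)(inclusion), and the section of the retraction underlying an inclusion of $N^{\ast}\mathcal{C}$ gives the required splitting.

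For the last axiom one must exhibit, for each $H(\epsilon;-)\in vN^{\ast}\mathcal{C}$, a normal cone in $N^{\ast}\mathcal{C}$ with that vertex whose component there is the identity. The natural candidate is the assignment $H(\eta;-)\mapsto$ (the $N^{\ast}\mathcal{C}$-morphism $H(\eta;-)\to H(\epsilon;-)$ whose associated map in $\mathcal{C}$ is the component $\eta(c_{\epsilon})\colon c_{\epsilon}\to c_{\eta}$ of the cone $\eta$); compatibility with inclusions reduces, via contravariance, to the cone identity $\eta(c_{\epsilon})=\eta'(c_{\epsilon})\,\eta(c_{\eta'})$, which is exactly what the compatibility clause in the definition of inclusions provides, and normality holds because the component at $\eta=\epsilon$ is $\epsilon(c_{\epsilon})=1_{c_{\epsilon}}$, an isomorphism. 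I expect the genuine work --- and the main obstacle --- to lie not in the Yoneda argument but in this stretch of bookkeeping: fixing the right definition of inclusion in $N^{\ast}\mathcal{C}$, verifying that it yields a strict preorder, and checking that the candidate cones above are well defined on $vN^{\ast}\mathcal{C}$ (independently of the representative $\eta$ of $H(\eta;-)$). This last point in particular is where one really uses that $N^{\ast}\mathcal{C}$ has ``enough'' objects, i.e.\ axiom~(3) for $\mathcal{C}$, which guarantees that every object of $\mathcal{C}$ occurs as some $c_{\eta}$.
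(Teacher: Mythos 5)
The paper itself gives no proof of Theorem \ref{thm1}: it is imported from Nambooripad's monograph \cite{cross}, so your attempt can only be measured against the source's argument. The first half of your proposal is correct and is essentially the standard route: $f\mapsto\epsilon\ast f^{\circ}$ is a natural bijection $\mathcal{C}(c_{\epsilon},-)\to H(\epsilon;-)$ (injectivity exactly as you argue, since $\epsilon(c_{\epsilon})=1_{c_{\epsilon}}$ recovers $f^{\circ}$ from the cone and $f=f^{\circ}j$), and Yoneda, or your direct evaluation of (\ref{eqnH2}) at $c=c_{\epsilon}$ on $\epsilon$, gives the existence and uniqueness of $\hat{\sigma}\in\mathcal{C}(c_{\epsilon'},c_{\epsilon})$ with the stated components.

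The gap is in the other half of the statement, that $N^{\ast}\mathcal{C}$ is a normal category; this is where the real content lies, and your text only outlines it, explicitly ``granting'' the key verifications. Two concrete problems. First, your proposed inclusion $H(\epsilon;-)\to H(\epsilon';-)$ is defined through chosen representatives: the condition ``$c_{\epsilon}\subseteq c_{\epsilon'}$ and $\epsilon=\epsilon'\ast(\epsilon(c_{\epsilon'}))^{\circ}$'' is not invariant under replacing $\epsilon$ by an $\mathscr{R}$-equivalent idempotent cone, although the object $H(\epsilon;-)$ is. In the paper's own model $N^{\ast}\mathscr{P}(X)\cong\Pi(X)$, Lemma \ref{lemH} shows $H(e;-)\subseteq H(f;-)$ componentwise exactly when $\pi_{f}\subseteq\pi_{e}$, and this can happen with $\mathrm{Im}\,e\not\subseteq\mathrm{Im}\,f$ (take $X=\{1,2,3,4\}$, $e$ with kernel $\{\{1,2\},\{3,4\}\}$ and image $\{1,3\}$, $f$ with kernel $\{\{1,2\},\{3\},\{4\}\}$ and image $\{2,3,4\}$): your condition fails for these representatives yet holds after replacing $f$ by the $\mathscr{R}$-equivalent idempotent with image $\{1,3,4\}$. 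So one must either define subobjects intrinsically, by componentwise containment $H(\epsilon;c)\subseteq H(\epsilon';c)$ as in \cite{cross}, or prove representative-independence, which you defer. Second, transporting a normal factorization $\hat{\sigma}=euj$ across the contravariant bijection requires idempotent cones whose vertices are the intermediate objects $c',d'$ (axiom (3) for $\mathcal{C}$ supplies some), and then a genuine verification that the three resulting factors of $\sigma$ are respectively a retraction, an isomorphism and an inclusion of $N^{\ast}\mathcal{C}$, that these inclusions split, and that the cones you propose for axiom (3) of $N^{\ast}\mathcal{C}$ are well defined on objects; none of this is carried out, and it is precisely this checking that constitutes the proof in \cite{cross}. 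As it stands, your proposal establishes the characterization (\ref{eqnH2}) of morphisms but not the normality of $N^{\ast}\mathcal{C}$.
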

\begin{dfn}\label{ideal}
Let $\mathcal{C}$ be a small category with subobjects. An \emph{ideal}\index{ideal!of a category} $\langle c \rangle$ of $\mathcal{C}$ is the full subcategory of $\mathcal{C}$ whose objects are subobjects of $c$ in $\mathcal{C}$. It is called the principal ideal generated by $c$.
\end{dfn}
\begin{dfn} \label{lociso}
Let $\mathcal{C}$ and $\mathcal{D}$ be normal categories. A functor $F\colon  \mathcal{C} \to \mathcal{D}$ is said to be a \emph{local isomorphism}\index{local isomorphism} if $F$ is inclusion preserving, fully faithful and for each $c \in v\mathcal{C}$, $F_{|\langle c \rangle}$ is an isomorphism of the ideal $\langle c \rangle$ onto $\langle F(c) \rangle$.
\end{dfn}
\begin{dfn} \label{cxn}
Let $\mathcal{C}$ and $\mathcal{D}$ be normal categories. A \emph{cross-connection}\index{cross-connection} from $\mathcal{D}$ to $\mathcal{C}$ is a triplet $(\mathcal{D},\mathcal{C};{\Gamma})$ where $\Gamma\colon  \mathcal{D} \to N^\ast\mathcal{C}$ is a local isomorphism such that for every $c \in v\mathcal{C}$, there is some $d \in v\mathcal{D}$ such that $c \in M\Gamma(d)$.
\end{dfn}
\begin{rmk}\label{rmkdual}
Given a cross-connection $(\mathcal{D},\mathcal{C};{\Gamma})$, there always exists a unique \emph{dual cross-connection} $\Delta$ from $\mathcal{C}$ to $N^\ast\mathcal{D}$. Further every result of cross-connection gives a corresponding result for the dual cross-connection.   
\end{rmk}
Given the cross-connection $\Gamma$ with the dual cross-connection $\Delta$, by category isomorphisms \cite{mac}, we have two associated bifunctors $\Gamma(-,-)\colon  \mathcal{C}\times\mathcal{D} \to \bf{Set}$ and $\Delta(-,-)\colon  \mathcal{C}\times\mathcal{D} \to \bf{Set}$ such that there is a natural isomorphism $\chi_\Gamma$ between them. Using the bifunctors, we obtain the following intermediary regular semigroups which are subsemigroups of $T\mathcal C$ and $T\mathcal D$ respectively.
\begin{subequations}
\begin{align}
U\Gamma = & \bigcup\: \{  \Gamma(c,d) : (c,d) \in v\mathcal{C} \times v\mathcal{D} \} \\
U\Delta = & \bigcup\: \{  \Delta(c,d) : (c,d) \in v\mathcal{C} \times v\mathcal{D} \}
\end{align}
\end{subequations}
The natural isomorphism $\chi_\Gamma$ is called the \emph{duality} associated with the cross-connection. Using $\chi_\Gamma$ we can get a \emph{linking} of some normal cones in $U\Gamma$ with those in $U\Delta$. Given a cross-connection $\Gamma$ with the dual $\Delta$, a cone $\gamma \in U\Gamma$ is said to be \emph{linked}\index{cone!linked} to $\delta \in U\Delta$ if there is a $(c,d) \in v\mathcal{C} \times v\mathcal{D}$ such that $\gamma \in \Gamma(c,d)$ and $ \delta = \chi_\Gamma(c,d)(\gamma)$. The pairs of linked cones $(\gamma,\delta)$ will form a regular semigroup which is called the \emph{cross-connection semigroup} $\tilde{S}\Gamma$ determined by $\Gamma$.
$$ \tilde{S}\Gamma = \:\{\: (\gamma,\delta) \in U\Gamma\times U\Delta : (\gamma,\delta) \text{ is linked }\:\} $$ 
For $(\gamma,\delta),( \gamma' , \delta') \in \tilde{S}\Gamma$, the binary operation is defined by 
$$ (\gamma , \delta) \circ ( \gamma' , \delta') = (\gamma . \gamma' , \delta' . \delta)    $$

Let $S$ be an arbitrary regular semigroup. The objects of the category $\mathcal{L}$ of principal left ideals are $Se$ for $e\in E(S)$, and the morphisms are partial right translations $\rho_u: u \in eSf$. Dually, the objects of the category $\mathcal{R}$ of principal right ideals are $eS$, and the morphisms are partial left translations $\lambda_w : w \in fSe$. They have an obvious \emph{choice of subobjects}, namely the preorder induced by set inclusions. It can be seen that they form normal categories with this choice of subobjects. Further if we do the cross-connection construction starting with the categories $\mathcal{L}$ and $\mathcal{R}$, then $\tilde{S}\Gamma$ will be isomorphic to $S$. This will give a faithful representation of the semigroup $S$ as a subsemigroup of $T\mathcal L \times (T\mathcal R)^\text{op}$, being a sub-direct product.

\section{Powerset category and the category of partitions}\label{secpxc}
Let $X$ be a non-empty set and in order to avoid trivialities, assume that $X$ has at least two elements. Given the set $X$, we can construct a very simple category $\mathscr{P}(X)$ from $X$ with the object set as the set of all proper subsets of $X$. Given any two subsets $A$, $B$; a morphism from $A$ to $B$ is a function $f$ from $A$ to $B$. We will call $\mathscr{P}(X)$ the \emph{powerset category} and it has a natural \emph{choice of subobjects}: the one provided by set inclusions. Here, the composition of two morphisms is function composition and inclusion morphisms are inclusion functions, i.e, we have an inclusion $j = j(A,B) \colon  A \to B$ if $A\subseteq B$. Given an inclusion $j(A,B)$, it is easy to see that there exists a function $q\colon  B \to A$ such that $j(A,B)q = 1_{A}$. Thus every inclusion splits in $\mathscr{P}(X)$ and $q$ is a retraction.

Now given any function $f$ from $A$ to $B$, it has a factorization of the form $f = quj$ where $q\colon A\to A'$ is a retraction, $u\colon A'\to B'$ is an isomorphism (i.e., a bijection) and $j=j(B',B)$ is an inclusion where $B' =$ Im $f$ and $A'$ is a cross-section of the partition $\pi_f$ of $A$ determined by $f$. Given a partition on a set $A$, a subset $A'$ of $A$ is called a \emph{cross-section}\index{cross-section} provided that $A'$ contains exactly one representative from each equivalence class. Note that $\pi_f \:=\: \{ (b)f^{-1} : b \in B'\}$. This is the \emph{normal factorization} of $f$ and $qu$ is the \emph{epimorphic component} $f^\circ$.

Given any  $D \subseteq X$, we associate with it a function $\sigma\colon v\mathscr{P}(X) \to \mathscr{P}(X)$ having the following properties. For each subset $A$ of $X$, there exists a map $\sigma(A)\colon  A \to D$, and whenever $A \subseteq B$, $j(A,B)\sigma(B) =\sigma(A)$. Further for some subset $C$ of $X$, the map $\sigma(C)\colon C \to D$ is a bijection. This collection of morphisms $\{\sigma(A) : A\in v\mathscr{P}(X)\}$ constitute a \emph{normal cone} $\sigma$ with vertex $D$ in the category $\mathscr{P}(X)$. In addition if $\sigma(D) =1_D$, then $\sigma$ will be called an \emph{idempotent} normal cone.

Let $u\colon  X \to D $ be a transformation such that $(x)u = x $ for every $x \in D$. Then for any $A \subseteq X$, define 
$$\sigma(A) = u_{|A} \colon  A \to D.$$
Then $\sigma$ is an idempotent normal cone since $\sigma(D) = 1_D$. So $\mathscr{P}(X)$ is a small category with subobjects such that each morphism in $\mathscr{P}(X)$ has a normal factorization and each $D \in v\mathscr{P}(X)$ has an associated idempotent normal cone; and hence $\mathscr{P}(X)$ is a normal category as defined in Section \ref{sccxn}.

Let $\gamma$ be a normal cone with vertex $C$ and $f\colon  C\to D$ an onto map in $\mathscr{P}(X)$. Then we can see that $\gamma*f\colon  A\mapsto \gamma(A) f$ gives a normal cone in $\mathscr{P}(X)$ with vertex $D$.

Now suppose $\gamma$, $\delta$ are two normal cones in $\mathscr{P}(X)$ with vertices $C$ and $D$ respectively, we can compose them as follows. 
\begin{equation} \label{eqnsg}
\gamma \cdot \delta = \gamma*(\delta(C))^\circ
\end{equation} 
where $(\delta(C))^\circ$ is the epimorphic component of the morphism $\delta(C)$. Then $\gamma \cdot \delta$ is a normal cone with vertex $D$. The set of all normal cones in $\mathscr{P}(X)$ under the binary operation defined in (\ref{eqnsg}) forms a regular semigroup $T\mathscr{P}(X)$ called the \emph{semigroup of normal cones} in $\mathscr{P}(X)$.

It can be shown that every normal cone $\sigma$ in $\mathscr{P}(X)$ with vertex $A$ defines a transformation $a\colon  X \to A$ as follows.
\begin{equation}\label{cone}
(x) a = (x)\sigma(\{x\}) \text{ for all } x \in X
\end{equation} 
where $\sigma(\{x\})$ is the component of $\sigma$ at the singleton $\{x\} \in v \mathscr{P}(X)$; and conversely every transformation $a\colon X \to A$ determines a normal cone in $\mathscr{P}(X)$ \cite{ltx}.

If we denote this normal cone generated by the transformation $a$ as $\rho^a$; then we can show that $\phi\colon  Sing(X) \to T\mathscr{P}(X) : a \mapsto \rho^a$ is a semigroup homomorphism. Such a homomorphism always exists between a regular semigroup $S$ and $T\mathcal L$ where $\mathcal L$ is the normal category associated with $S$ \cite[Theorem 3.16]{cross}. Although in general, $\phi$ need not be injective or onto, in the case of $Sing(X)$, we can show that $\phi$ is indeed a bijection. Summarising, we have the following.
\begin{thm}\cite{ltx}\label{thmpx}
$\mathscr{P}(X)$ is a normal category and $T\mathscr{P}(X)$ is isomorphic to $Sing(X)$.
\end{thm}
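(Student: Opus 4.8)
The plan is to establish the two assertions of Theorem~\ref{thmpx} in turn. That $\mathscr{P}(X)$ is a normal category has essentially been verified in the preceding discussion: we exhibited a choice of subobjects via set inclusions, checked that every inclusion $j(A,B)$ splits (any function $q\colon B\to A$ that restricts to the identity on $A$ is a retraction), produced a normal factorization $f=quj$ for an arbitrary morphism $f$ using the image and a cross-section of the kernel partition $\pi_f$, and associated to each vertex $D$ an idempotent normal cone coming from an idempotent transformation $u\colon X\to D$. So the first clause requires only that I assemble these observations and confirm axioms (1)--(3) of the definition of normal category, which I would do briskly.

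For the isomorphism $T\mathscr{P}(X)\cong Sing(X)$, the plan is to analyze the map $\phi\colon Sing(X)\to T\mathscr{P}(X)$, $a\mapsto\rho^a$, where $\rho^a$ is the normal cone with $\rho^a(A)=a|_A\colon A\to\mathrm{Im}\,a$ (equivalently $\rho^a$ is recovered from $a$ via $(x)a=(x)\rho^a(\{x\})$). First I would record that $\phi$ is a semigroup homomorphism, which is the general fact cited from \cite[Theorem 3.16]{cross} specialised to $S=Sing(X)$ and $\mathcal L=\mathscr{P}(X)$; alternatively one checks directly that $\rho^{ab}=\rho^a\cdot\rho^b$ by comparing components at singletons and using the formula \eqref{eqnsg} together with the fact that the epimorphic component of $\rho^b(\mathrm{Im}\,a)$ is essentially $b$ corestricted appropriately. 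Next, injectivity: if $\rho^a=\rho^b$ then in particular $\rho^a(\{x\})=\rho^b(\{x\})$ for every $x\in X$, and by \eqref{cone} this forces $(x)a=(x)b$ for all $x$, so $a=b$. The key point here is that the singleton components of a cone in $\mathscr{P}(X)$ already determine the whole cone, because the cone compatibility condition $j(\{x\},A)\sigma(A)=\sigma(\{x\})$ means $\sigma(A)$ agrees with $\sigma(\{x\})$ on each $x\in A$, i.e. every component is determined by its values at singletons.

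The substantive step is surjectivity of $\phi$: every normal cone $\sigma$ in $\mathscr{P}(X)$ is of the form $\rho^a$ for some $a\in Sing(X)$. Given $\sigma$ with vertex $A$, define $a\colon X\to A$ by $(x)a=(x)\sigma(\{x\})$; this is a well-defined transformation into the proper subset $A$, hence non-invertible, so $a\in Sing(X)$. I then need to verify that $\sigma=\rho^a$, i.e. that $\sigma(B)=a|_B$ for every $B\in v\mathscr{P}(X)$ — which again reduces, by the singleton-determination observation above, to checking agreement at singletons, where it holds by construction — and also that the vertex and the isomorphism-component data match, i.e. that $\mathrm{Im}\,a=A$; this last uses normality of $\sigma$, namely that $\sigma(C)\colon C\to A$ is a bijection for some $C$, which gives $\mathrm{Im}(a|_C)=A$ and hence $\mathrm{Im}\,a=A$. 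I expect the main obstacle to be exactly this bookkeeping: confirming that the transformation $a$ reconstructed from the singleton components genuinely reproduces all of $\sigma$'s structure — vertex, every component, and the normal-cone axioms — and that the correspondence is inverse to $\phi$ on both sides. Since the cited source \cite{ltx} already establishes the cone/transformation bijection, I would lean on it for the technical core and present the argument as the verification that this bijection is the semigroup isomorphism $\phi^{-1}$, concluding that $\phi$ is an isomorphism and therefore $T\mathscr{P}(X)\cong Sing(X)$.
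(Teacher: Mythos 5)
Your proposal is correct and follows essentially the same route as the paper: the normality of $\mathscr{P}(X)$ is assembled from the preceding observations (split inclusions, normal factorizations via image and cross-section of $\pi_f$, idempotent cones), and the isomorphism is obtained from the map $\phi\colon a\mapsto\rho^a$ together with the cone--transformation correspondence through singleton components given in \eqref{cone}, with the homomorphism property drawn from \cite[Theorem 3.16]{cross} as in the text. The extra details you supply (injectivity and surjectivity via the fact that a cone is determined by its components at singletons, and the vertex identification $\mathrm{Im}\,a=A$ via normality) are exactly the verifications the paper delegates to \cite{ltx}, so there is nothing to correct.
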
  
\begin{rmk}
Observe that $T\mathscr{P}(X)$ is a representation of the semigroup $Sing(X)$; wherein an element in $Sing(X)$ is represented as a normal cone in $\mathscr{P}(X)$, which is nothing but a generalization of \emph{normal mapping} of Grillet \cite{gril}.
\end{rmk}
Now using the category $\mathscr{P}(X)$, we construct the \emph{normal dual} $N^*\mathscr{P}(X)$ of $\mathscr{P}(X)$. The objects of $N^*\mathscr{P}(X)$ are set-valued functors called $H$-functors. For an idempotent transformation $e \in Sing(X)$, define a $H$-functor $ H(e;-) \colon  \mathscr{P}(X) \to \mathbf{Set}$ as follows. For each $A \in v\mathscr{P}(X)$ and for each $g\colon A\to B$ in $\mathscr{P}(X)$,
\begin{subequations} \label{eqnH}
\begin{align}
H(e;A)= &\{ef : \: f:\text{Im }e\to A \} \text{ and }\\
H(e;g) :H(e;A) &\to H(e;B) \text{ given by }ef \mapsto efg.
\end{align}
\end{subequations}
\begin{lem}\cite{pix}\label{lemH}
Let $e \in  Sing(X) $ and $A\subseteq X$. Then
$$ H (e ; {A}) = \{  a \in Sing(X) \: : \: \pi_a \supseteq \pi_e \text{ and Im }a\subseteq A  \}$$
and if $g : {A}\to{B} $ then $ H (e ; g ) \colon  H(e;{A})  \to  H(e;{B})  $ is given by $a \mapsto ag$.
\end{lem}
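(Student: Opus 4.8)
The goal is to identify the $H$-functor $H(e;-)$, defined abstractly via $H(e;A) = \{ef : f\colon \mathrm{Im}\,e \to A\}$ as a set of cones, with the concrete set of transformations $\{a \in Sing(X) : \pi_a \supseteq \pi_e \text{ and } \mathrm{Im}\,a \subseteq A\}$. The plan is to exploit the isomorphism $\phi\colon Sing(X) \to T\mathscr{P}(X)$ of Theorem \ref{thmpx}, so that composites of cones correspond to products in $Sing(X)$, and then simply compute which transformations arise as $ef$ for $f\colon \mathrm{Im}\,e \to A$.

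First I would unwind the definition: by (\ref{eqnsg}) and the remarks preceding it, the cone $ef$ (with $f\colon \mathrm{Im}\,e \to A$ onto its image, extended via normal factorization) is the cone $\rho^e \ast f^\circ$, and under $\phi$ this corresponds to the transformation obtained by composing $e$ with (the epimorphic part of) $f$; since $\phi$ is a homomorphism, $ef$ as a cone is $\rho^{ef'}$ where $f'$ is the transformation $X \to A$ agreeing with $f$ on $\mathrm{Im}\,e$ in the appropriate sense. Thus the set $H(e;A)$, viewed inside $Sing(X)$ via $\phi^{-1}$, is exactly $\{eb : b \in Sing(X),\ \mathrm{Im}\,b \subseteq A\} = e\,Sing(X) \cap \{a : \mathrm{Im}\,a \subseteq A\}$ — more precisely the set of all transformations of the form $ea'$ where $a'$ ranges over maps into $A$. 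The second step is the routine characterization: a transformation $a$ lies in $e\cdot Sing(X)$ (with image in $A$) if and only if $\pi_a \supseteq \pi_e$ and $\mathrm{Im}\,a \subseteq A$. The forward direction is immediate since $x\,\pi_e\,y$ forces $(x)e = (y)e$, hence $(x)ea' = (y)ea'$; for the converse, given $a$ with $\pi_a \supseteq \pi_e$, one defines $b$ on $\mathrm{Im}\,e$ by choosing, for each $z \in \mathrm{Im}\,e$, a preimage $x$ under $e$ and setting $(z)b = (x)a$ — well-defined precisely because $\pi_a \supseteq \pi_e$ — and checks $eb = a$ with $\mathrm{Im}\,b \subseteq \mathrm{Im}\,a \subseteq A$. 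Finally, for the morphism part, $H(e;g)$ sends $ef \mapsto efg$ by definition (\ref{eqnH}b), and under the identification this is precisely $a \mapsto ag$, where $ag$ means composing the transformation $a\colon X \to A$ with $g\colon A \to B$.

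The main obstacle I anticipate is bookkeeping around the epimorphic components and the passage between "a function $f\colon \mathrm{Im}\,e \to A$" in the category $\mathscr{P}(X)$ and "a transformation in $Sing(X)$": one must be careful that $H(e;A)$ as defined consists of \emph{cones} $ef = \rho^e \ast f^\circ$ (equivalently elements of $T\mathscr{P}(X)$), and that transferring through $\phi^{-1}$ genuinely yields every transformation with the stated kernel and image constraints, with no spurious elements and no omissions. In particular one should verify that allowing $f$ to range over \emph{all} maps $\mathrm{Im}\,e \to A$ (not just injective ones) is what produces the full kernel class $\pi_a \supseteq \pi_e$ rather than only $\pi_a = \pi_e$, and that the image condition is $\subseteq A$ and not $= A$. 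Once these identifications are pinned down, both the object-level equality and the description of $H(e;g)$ follow without further difficulty; indeed this is essentially a specialization of the general fact (cf.\ the discussion after Theorem \ref{thm1} and equations (\ref{eqnH11})) that $H(\epsilon;c)$ describes the $\mathscr{H}$-class-like structure above the idempotent, here made concrete through Lemma-free direct computation in $Sing(X)$.
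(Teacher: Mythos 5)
Your proof is correct. Note that the paper itself gives no argument for this lemma --- it is quoted from \cite{pix} --- so there is no in-paper proof to compare with; your direct verification is exactly the expected one. Both inclusions are handled soundly: for $f\colon \mathrm{Im}\,e\to A$ the composite $ef$ clearly satisfies $\pi_{ef}\supseteq\pi_e$ and $\mathrm{Im}\,ef\subseteq A$, and conversely your choice-of-preimage construction of $f$ from a transformation $a$ with $\pi_a\supseteq\pi_e$, $\mathrm{Im}\,a\subseteq A$ does give $ef=a$ (the independence of the choice is exactly $\pi_a\supseteq\pi_e$). In fact, since $e$ is idempotent (as it must be for $H(e;-)$ to be an object of the normal dual, cf.\ (\ref{eqnH1}) and the definition preceding (\ref{eqnH})), you can shortcut this: $(x)e$ is $\pi_e$-related to $x$ for every $x$, so one may simply take $f=a|_{\mathrm{Im}\,e}\colon \mathrm{Im}\,e\to A$ and read off $ef=a$ directly. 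The bookkeeping point you flag is the only genuinely delicate one, and you resolve it correctly: elements of $H(e;A)$ are a priori normal cones $\rho^{e}\ast f^{\circ}$, and the identification with the composite transformation $ef$ rests on Theorem \ref{thmpx} and (\ref{cone}); since $\rho^{e}\cdot\rho^{b}=\rho^{e}\ast(\rho^{b}(\mathrm{Im}\,e))^{\circ}=\rho^{eb}$ for any $b$ extending $f$ with image in $A$, replacing ``$f$ ranging over morphisms $\mathrm{Im}\,e\to A$'' by ``$b$ ranging over transformations into $A$'' changes nothing, and the morphism part $H(e;g)\colon a\mapsto ag$ is then definitional from (\ref{eqnH}). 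So the proposal stands as a complete proof, marginally longer than necessary only in its detour through $\phi$ and epimorphic components.
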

Thus the $H$-functor $ H (e ;-)$ is completely determined by the partition of $e$. The morphisms in $N^\ast \mathscr{P}(X)$ which are natural transformations between the functors are characterized as follows.
\begin{lem}\cite{pix}\label{lemH2} 
Let $\sigma\colon  H(e;-) \to H(f;-)$ be a morphism in $N^\ast \mathscr{P}(X)$. Then there exists a unique $w \in f(Sing(X))e$ such that the component of the natural transformation $ \sigma({C}) \colon  H(e;C) \to H(f;C)$ is given by $ a \mapsto wa$ where $a \in H(e; {C})$ so that $\pi_a \supseteq \pi_e \text{ and Im }a\subseteq C$.
\end{lem}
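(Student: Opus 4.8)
The plan is to extract the element $w$ directly from $\sigma$ by evaluating the natural transformation at one carefully chosen object, and then to use naturality of $\sigma$ to recover its action at an arbitrary object $C$. Since $e\in E(Sing(X))$ is idempotent and non-invertible, $\operatorname{Im}e$ is a proper subset of $X$, hence an object of $\mathscr{P}(X)$, and $e\in H(e;\operatorname{Im}e)$ by Lemma~\ref{lemH}. So one sets $w:=\sigma(\operatorname{Im}e)(e)\in H(f;\operatorname{Im}e)$. Again by Lemma~\ref{lemH}, $\pi_w\supseteq\pi_f$ and $\operatorname{Im}w\subseteq\operatorname{Im}e$; the first condition gives $w=fw'$ for a suitable $w'\in Sing(X)$ (as $f$ is idempotent), and the second gives $we=w$ (as $e$ fixes $\operatorname{Im}e$ pointwise). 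Combining, $w=fw'e\in f(Sing(X))e$, as required. (Alternatively one could first invoke Theorem~\ref{thm1} to obtain the unique $\hat{\sigma}\colon\operatorname{Im}f\to\operatorname{Im}e$ in $\mathscr{P}(X)$ and then verify that the element just constructed is precisely $f$ followed by $\hat{\sigma}$; but the naturality argument keeps everything inside $Sing(X)$.)

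The main step is to check that $\sigma(C)(a)=wa$ for every object $C$ and every $a\in H(e;C)$. The key observation is that, since $\pi_a\supseteq\pi_e$, the transformation $a$ factors as $a=eh$ where $h:=a_{|\operatorname{Im}e}\colon\operatorname{Im}e\to C$ is a genuine morphism of $\mathscr{P}(X)$ (indeed $\operatorname{Im}h\subseteq\operatorname{Im}a\subseteq C$). Now write the naturality square of $\sigma$ for the morphism $h\colon\operatorname{Im}e\to C$, using $H(e;h)\colon b\mapsto bh$ and $H(f;h)\colon b\mapsto bh$ from Lemma~\ref{lemH}, and chase the element $e\in H(e;\operatorname{Im}e)$ around it: along one side $H(e;h)(e)=eh=a$ and then $\sigma(C)(a)$; along the other $\sigma(\operatorname{Im}e)(e)=w$ and then $H(f;h)(w)=wh$. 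Hence $\sigma(C)(a)=wh$, and since $we=w$ we get $wh=w(eh)=wa$, which is exactly the claim.

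For uniqueness, suppose $w_1\in f(Sing(X))e$ also satisfies $\sigma(C)(a)=w_1a$ for all $C$ and all $a\in H(e;C)$. Specialising to $C=\operatorname{Im}e$ and $a=e$ gives $w_1e=\sigma(\operatorname{Im}e)(e)=w$. But $w_1\in f(Sing(X))e$ forces $\operatorname{Im}w_1\subseteq\operatorname{Im}e$, whence $w_1e=w_1$; therefore $w_1=w$.

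The routine ingredients are the two elementary factorisation facts ($\pi_a\supseteq\pi_e\Rightarrow a=eh$ with $h=a_{|\operatorname{Im}e}$, and $\operatorname{Im}w\subseteq\operatorname{Im}e\Rightarrow we=w$), both immediate from idempotency of $e$ and $f$. The only point needing care is ensuring that every object appearing ($\operatorname{Im}e$, $\operatorname{Im}f$, $C$) is a legitimate object of $\mathscr{P}(X)$, i.e.\ a proper subset of $X$ — which is where non-invertibility of $e$ and $f$ enters — and that the naturality square is instantiated at the morphism $h$ rather than at some ambient map on $X$. I expect this bookkeeping, together with the concrete identification of $H$-functors supplied by Lemma~\ref{lemH}, to be the only genuine obstacle; the rest is diagram chasing.
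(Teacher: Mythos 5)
Your proof is correct. Note, however, that the paper itself does not prove this lemma: it is quoted from \cite{pix}, and within the framework of this paper it is the specialization of the general Theorem~\ref{thm1} to $\mathcal{C}=\mathscr{P}(X)$ --- one takes the unique $\hat{\sigma}\colon \operatorname{Im}f\to\operatorname{Im}e$ supplied by that theorem, translates the cones $\epsilon\ast g^\circ$ into transformations via the isomorphism $T\mathscr{P}(X)\cong Sing(X)$ (equation~(\ref{cone}) and Theorem~\ref{thmpx}), and identifies $w$ with the element of $f\,Sing(X)\,e$ determined by $\hat{\sigma}$. You instead bypass the general theory entirely: you extract $w=\sigma(\operatorname{Im}e)(e)$ by evaluating at a single object, verify $w=fwe$ from the two membership conditions of Lemma~\ref{lemH}, and recover $\sigma(C)$ on an arbitrary $a=eh$ by chasing $e$ around the naturality square at $h=a_{|\operatorname{Im}e}$, with a clean uniqueness argument from $w_1e=w_1$. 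The elementary facts you use ($ea=a$ and $fw=w$ when the kernels refine, $we=w$ when the image lies in $\operatorname{Im}e$, and properness of $\operatorname{Im}e$ since $e\in Sing(X)$) are all sound, and the left-to-right composition convention is respected throughout. What your route buys is a self-contained argument entirely inside $Sing(X)$, independent of the machinery of normal cones and of Theorem~\ref{thm1}; what the paper's route buys is that the same statement is obtained uniformly for any normal category, with the concrete description falling out of the general classification of morphisms in $N^\ast\mathcal{C}$. Both establish existence and uniqueness of $w\in f(Sing(X))e$, so there is no gap.
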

The above discussion regarding the normal dual of $\mathscr{P}(X)$ inspires us to define a category of partitions of $X$ as follows. A \emph{partition}\index{partition} $\pi$ of $X$ is a family of subsets $A_i$ of $X$ such that $\cup A_i \: = \: X$ and $A_i\cap A_j = \phi$ for $i \ne j$. A partition is said to be non-identity if at least one $A_i$ has more than one element. Given a non-identity partition $\pi$ of $X$, we denote by $\bar{\pi}$ the set of all functions from $\pi$  to $X$.

Let $\pi_1$, $\pi_2$ be partitions of $X$ and let ${\eta}$ be a function from $\pi_2$ to $\pi_1$. We define $\eta^\ast \colon  \bar{\pi}_1 \to \bar{\pi}_2$ by $(\alpha)\eta^\ast = {\eta}\alpha$ for every $\alpha \in \bar{\pi}_1$. 

Now the \emph{category of partitions}\index{category!of partitions} $\Pi(X)$ of the set $X$ has vertex set 
$$v\Pi(X) = \{ \: \bar{\pi} \: :\:\pi \text{ is a non-identity partition of }X \}$$
and a morphism in $\Pi(X)$ from $\bar{\pi}_1 \to \bar{\pi}_2$ is given by $\eta^\ast$ as defined above where $\eta\colon  \pi_2 \to \pi_1$. Observe that $\eta$ from $\pi_2$ to $\pi_1$ gives a morphism from $\bar{\pi}_1$ to $\bar{\pi}_2$; and it is precisely for this reason, the category of partitions is defined as above.

If $\pi_1 = \{ A_i : i \in I\}$ and $\pi_2 = \{ B_j : j \in J \}$, define a partial order on $\Pi(X)$ as 
\begin{gather*}
\bar{\pi}_1 \leq \bar{\pi}_2\text{ if and only if } {\pi_2} \leq {\pi_1} \text{; where }\\
{\pi_2} \leq {\pi_1}\text{ if and only if for each }j, B_j \subseteq A_i\text{ for some }i.
\end{gather*}
If $ {\pi_2} \leq {\pi_1}$, then the map $\nu\colon  B_j \mapsto A_i$ is the inclusion map from $\pi_2 \to \pi_1$ and $\nu^\ast \colon  \bar{\pi}_1 \to \bar{\pi}_2$ is a morphism in $\Pi(X)$. We consider $\nu^\ast$ as the inclusion\index{inclusion} morphism from $\bar{\pi}_1$ to $\bar{\pi}_2$.

If $\nu^\ast = j(\bar{\pi}_1,\bar{\pi}_2) \colon  \bar{\pi}_1 \to \bar{\pi}_2$ is the inclusion morphism, then there exists a retraction\index{retraction} $\zeta^\ast \colon  \bar{\pi}_2 \to \bar{\pi}_1$, i.e., $\nu^\ast\zeta^\ast = 1_{\bar{\pi}_1}$. Here, $\zeta\colon  \pi_1 \to \pi_2$ is defined by $(A_i)\zeta = B_j$ where $B_j \in \pi_2$ is chosen such that $ B_j \subseteq A_i $. Thus the inclusion $\nu^\ast$ splits.

Now if we define $\mathcal{P}$ as the subcategory of $\Pi(X)$ with $v\mathcal{P} = v\Pi(X)$ and morphisms as inclusion morphisms $\nu^\ast$, then we can verify that $(\Pi(X),\mathcal{P})$ forms a category with subobjects. The next theorem gives a normal factorization of a morphism in $\Pi(X)$.

\begin{thm}
Let $\eta^\ast$ be a morphism from $\bar{\pi}_1$ to $\bar{\pi}_2$ in $\Pi(X)$ with $\eta \colon  \pi_2 \to \pi_1$. Then there exist partitions $\pi_{\sigma}$ and $\pi_{\gamma}$; and $\nu\colon  \pi_2 \to \pi_{\sigma}$, $u\colon  \pi_{\sigma} \to \pi_{\gamma}$ and $\zeta\colon \pi_{\gamma} \to \pi_1$ such that $\eta^\ast = {\zeta^\ast}{u^\ast}{\nu^\ast}$ such that $\zeta^\ast$ is a retraction, $u^\ast$ is an isomorphism and $\nu^\ast$ is an inclusion in $\Pi(X)$.
\end{thm}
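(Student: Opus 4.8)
The plan is to reduce the statement to a factorization of the underlying map $\eta\colon\pi_2\to\pi_1$ of block sets and then dualize. From $(\alpha)\eta^\ast=\eta\alpha$ one sees at once that $(-)^\ast$ converts composition of maps of partitions into composition of morphisms of $\Pi(X)$ taken in the reverse order. So it is enough to produce non-identity partitions $\pi_\sigma,\pi_\gamma$ of $X$ together with maps $\nu\colon\pi_2\to\pi_\sigma$, $u\colon\pi_\sigma\to\pi_\gamma$, $\zeta\colon\pi_\gamma\to\pi_1$ satisfying $\eta=\nu u\zeta$ (functions written left to right), where $\nu$ sends each block to the block containing it, $u$ is a bijection, and $\zeta$ chooses a subblock inside each block; for then $\eta^\ast=(\nu u\zeta)^\ast=\zeta^\ast u^\ast\nu^\ast$, and the descriptions of inclusions, isomorphisms and retractions in $\Pi(X)$ recalled above show that $\nu^\ast$ is an inclusion, $u^\ast$ an isomorphism and $\zeta^\ast$ a retraction. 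The subobject relations $\bar{\pi}_\gamma\le\bar{\pi}_1$ and $\bar{\pi}_\sigma\le\bar{\pi}_2$ required for a normal factorization will be visible directly from the constructions.

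To build the inclusion factor I would pass to the kernel of $\eta$ on $\pi_2$: identify two blocks of $\pi_2$ when $\eta$ sends them to the same block of $\pi_1$, and merge the corresponding subsets of $X$. The resulting partition $\pi_\sigma$ has, for each $A$ in the image of $\eta$, the block $\bigcup\{\,B\in\pi_2 : \eta(B)=A\,\}$. Since $\eta$ is everywhere defined these blocks exhaust $X$; since $\pi_2$ is non-identity so is $\pi_\sigma$; and $\pi_2$ refines $\pi_\sigma$, so $\pi_2\le\pi_\sigma$ and hence $\bar{\pi}_\sigma\le\bar{\pi}_2$. The map $\nu\colon\pi_2\to\pi_\sigma$ sending a block to the block of $\pi_\sigma$ containing it is precisely the map defining the inclusion $j(\bar{\pi}_\sigma,\bar{\pi}_2)$, so $\nu^\ast$ is an inclusion in $\Pi(X)$.

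For the isomorphism and retraction factors, write $Y=\bigcup\operatorname{Im}\eta$ and fix $A_0\in\operatorname{Im}\eta$ (possible since $\pi_2\ne\emptyset$). Let $\pi_\gamma$ be the partition of $X$ whose blocks are $A_0\cup(X\setminus Y)$ together with all $A\in\operatorname{Im}\eta$ with $A\ne A_0$. Every block of $\pi_1$ either belongs to $\operatorname{Im}\eta$ or is disjoint from $Y$, so $\pi_1$ refines $\pi_\gamma$; thus $\pi_1\le\pi_\gamma$ and $\bar{\pi}_\gamma\le\bar{\pi}_1$. Since $\pi_1$ is non-identity it has a block of size at least two, and that block lies inside a block of $\pi_\gamma$, so $\pi_\gamma$ is non-identity as well. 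Define $\zeta\colon\pi_\gamma\to\pi_1$ by $\zeta(A)=A$ on the blocks $A\ne A_0\cup(X\setminus Y)$ and $\zeta\big(A_0\cup(X\setminus Y)\big)=A_0$; as $\zeta$ picks a $\pi_1$-subblock of each block of $\pi_\gamma$, the morphism $\zeta^\ast$ is a retraction of $j(\bar{\pi}_\gamma,\bar{\pi}_1)$. Finally the blocks of $\pi_\sigma$ and of $\pi_\gamma$ are both labelled bijectively by $\operatorname{Im}\eta$, which gives a bijection $u\colon\pi_\sigma\to\pi_\gamma$ and hence an isomorphism $u^\ast$. One checks $\nu u\zeta=\eta$ directly: a block $B$ of $\pi_2$ goes under $\nu$ to the $\pi_\sigma$-block labelled $\eta(B)$, under $u$ to the $\pi_\gamma$-block with the same label, and under $\zeta$ back to $\eta(B)$, whether or not $\eta(B)=A_0$.

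The delicate step — and the one I expect to be the real obstacle — is exactly the construction of $\pi_\gamma$: the image blocks of $\eta$ need not cover $X$, so one is forced to absorb the leftover set $X\setminus Y$ into one chosen image block in order to obtain a genuine, still non-identity partition of $X$, and one must check that this enlargement does not spoil the retraction property of $\zeta^\ast$ or the bijectivity underlying $u^\ast$. Everything else is bookkeeping: once $\eta=\nu u\zeta$ has been arranged, the identity $\eta^\ast=\zeta^\ast u^\ast\nu^\ast$ is immediate from the order reversal of $(-)^\ast$, and the three factors are of the prescribed types by the descriptions of inclusions, isomorphisms and retractions in $\Pi(X)$ given earlier.
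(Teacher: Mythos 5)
Your proposal is correct and follows essentially the same route as the paper's proof: the paper's $\sigma_\eta$ is exactly your kernel partition $\pi_\sigma$, its $\gamma_\eta$ (with $B=\cup\{A_j : A_j\notin \operatorname{Im}\eta\}$ absorbed into a fixed image block $A_1$) is exactly your $\pi_\gamma$ with $X\setminus Y$ merged into $A_0$, and the maps $\nu$, $u$, $\zeta$ and the verification $\nu u\zeta=\eta$ followed by dualizing to $\eta^\ast=\zeta^\ast u^\ast\nu^\ast$ coincide with the paper's argument. The step you flag as delicate (absorbing the uncovered part of $X$ into one chosen image block) is precisely the device the paper uses, so there is no gap.
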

\begin{proof}
For each partition $\pi = \{C_k : k \in K \}$ and $x \in X$, we denote by $[x]= [x]_\pi$ the set $C_k \in \pi$ such that $x \in C_k$. That is $[x]_\pi$ is the equivalence class of $x$ in $\pi$, considering $\pi$ as an equivalence relation. Let $\sigma = \sigma_\eta$ be the equivalence relation on $X$ defined as follows.
$$ \sigma =\{ (x,y) \: :\: [x]_{\pi_2}\eta = [y]_{\pi_2}\eta \} $$
Then as equivalence relations $\pi_2 \subseteq \sigma$. Let $\nu \colon  \pi_2 \to \sigma$ be the inclusion given by $[x]_{\pi_2} \mapsto [x]_\sigma$. Since $\nu\colon  \pi_2 \to \sigma$ is the inclusion map, we see that $\nu^\ast \colon  \bar{\sigma} \to \bar{\pi}_2$ is the inclusion morphism.

Let $\gamma = \gamma_\eta$ be the partition of $X$ defined as follows. Let $\pi_1 = \{ A_i : i \in I\}$. Then Im $\eta =\{ A_i : i \in I'\}$ for some $I'\subseteq I$. Then fix an element $1 \in I'$ and define $\gamma = \{ B \cup A_1\} \cup \{ A_i : i \in I'\setminus\{1\} \}$ where $B = \cup\{ A_j : A_j \notin \text{Im }\eta \} $. Then clearly $\gamma$ is a partition of $X$ and $\pi_1 \subseteq \gamma$.

Let $\zeta \colon  \gamma \to \pi_1$ be defined as follows.
\begin{equation*}
\begin{split}
B\cup A_1 &\mapsto A_1\\
A_j & \mapsto A_j \text{ for } j \in I'\setminus\{1\}
\end{split}
\end{equation*}
Then we see that $\zeta^\ast$ is a retraction from $\bar{\pi}_1$ to $\bar{\gamma}$. Now define $u\colon  \sigma \to \gamma$ as follows.
$$([x]_\sigma) u=
\begin{cases}
[x]_{\pi_2}\eta & \text{ if } [x]\eta \neq A_1\\
B\cup A_1 & \text{ if } [x]\eta = A_1
\end{cases}$$
Clearly $u$ is well-defined and is a bijection. Since $u$ is a bijection, $u^\ast \colon  \bar{\gamma} \to \bar{\sigma}$ is also a bijection and so is an isomorphism in $\Pi(X)$.

To show that $\eta^\ast = \zeta^\ast u^\ast \nu^\ast$, for any $[x]_{\pi_2} \in \pi_2$,
\vspace*{.3cm} 
\begin{align*}
([x]_{\pi_2})\nu u \zeta\: = \: ([x]_\sigma)u \zeta \:&=\: 
\begin{cases}
([x]_{\pi_2}\eta)\zeta & \text{ if } [x]\eta \neq A_1\\
(B\cup A_1)\zeta & \text{ if } [x]\eta = A_1
\end{cases}\\  
&=\:
\begin{cases}
[x]_{\pi_2}\eta & \text{ if } [x]\eta \neq A_1\\
A_1 & \text{ if } [x]\eta = A_1
\end{cases}\\  
&=\: [x]_{\pi_2}\eta.
\end{align*}
Hence, for any $\alpha \in \bar{\pi}_1$, 
$$(\alpha)\eta^\ast = (\alpha)(\nu u \zeta)^\ast = \nu u \zeta\alpha = (\alpha){\zeta}^\ast{u}^\ast{\nu}^\ast.$$
Thus $\eta^\ast = {\zeta}^\ast{u}^\ast{\nu}^\ast$ and so every morphism in $\Pi(X)$ has a normal factorization.
\end{proof}
Also let $\pi =\{A_i : i \in I\}$ be a partition of $X$ such that $\pi= \pi_e$ for an idempotent $e\colon X\to X$. For each partition $\pi_\alpha = \{ B_{j} : j \in J\}$, define $e_\alpha \colon  \pi \to {\pi_\alpha} $ by $A_i \mapsto B_j$ such that $(A_i)e \in B_j$. Then a cone $\bar{e}$ defined by 
$$\bar{e}(\bar{\pi}_{\alpha}) = (e_\alpha)^\ast \text{ for all } \bar{\pi}_\alpha \in \Pi(X)$$
is an idempotent normal cone in $\Pi(X)$ with vertex $\bar{\pi}$; and hence $\Pi(X)$ is a normal\index{category!normal} category \cite{pix}. We conclude this section by characterizing the semigroup $T\Pi(X)$ of all normal cones in $\Pi(X)$. 
\begin{thm}
The semigroup $T\Pi(X)$ of all normal cones in $\Pi(X)$ is anti-isomorphic to $Sing(X)$.
\end{thm}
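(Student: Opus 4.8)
The plan is to mirror, on the dual side, the proof of Theorem~\ref{thmpx} that $T\mathscr{P}(X)\cong Sing(X)$: the category $\Pi(X)$ plays for $Sing(X)$ the role that the category of principal \emph{right} ideals plays for a general regular semigroup, and it is the passage from left to right ideals — encoded in the contravariant operation $\eta\mapsto\eta^\ast$ built into $\Pi(X)$ — that turns the isomorphism into an anti-isomorphism. Concretely, I would build a bijection $\psi\colon Sing(X)\to T\Pi(X)$, $a\mapsto\lambda^a$, and show it reverses products. For $a\in Sing(X)$ and any partition $\pi_\beta$ of $X$, let $a_\beta\colon\pi_a\to\pi_\beta$ send the $\pi_a$-class of $x$ to the $\pi_\beta$-class of $(x)a$ (well defined, since $[x]_{\pi_a}=[y]_{\pi_a}$ forces $(x)a=(y)a$), and put $\lambda^a(\bar\pi_\beta):=(a_\beta)^\ast\in\Pi(X)(\bar\pi_\beta,\bar\pi_a)$. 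Then $\lambda^a$ is a cone with vertex $\bar\pi_a$: if $\bar\pi_1\subseteq\bar\pi_2$ (so $\pi_2$ refines $\pi_1$) with $j(\bar\pi_1,\bar\pi_2)=\nu^\ast$, $\nu\colon\pi_2\to\pi_1$ the refinement map, then $\nu^\ast(a_2)^\ast=(a_2\nu)^\ast=(a_1)^\ast$, because the $\pi_1$-class of $(x)a$ contains its $\pi_2$-class; and $\lambda^a$ is normal, since a non-identity partition $\pi_\beta$ meeting $\mathrm{Im}\,a$ in exactly one point per block makes $a_\beta$, hence $(a_\beta)^\ast$, an isomorphism. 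So $\psi$ is a well-defined map into $T\Pi(X)$.

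\emph{Bijectivity.} To recover a transformation from a normal cone $\lambda$ on $\Pi(X)$ with vertex $\bar\pi$ and components $\lambda(\bar\pi_\beta)=\eta_\beta^\ast$, $\eta_\beta\colon\pi\to\pi_\beta$, I would use the behaviour of $\lambda$ on the maximal objects of $\Pi(X)$, namely the $\overline{\pi_{\{z,z'\}}}$ with $\pi_{\{z,z'\}}$ having a single two-element block $\{z,z'\}$: for a class $C$ of $\pi$, the value $(x)a$ for $x\in C$ is the element $w$ for which $(C)\eta_{\{z,z'\}}$ is the singleton block $\{w\}$ for every such $\overline{\pi_{\{z,z'\}}}$ with $w\notin\{z,z'\}$ (the cone relations among these objects give existence of $w$, and comparing two of them pins $w$ down — this is the one place where $|X|\ge 3$ is genuinely needed, and where the statement is sharp). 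One then checks $\lambda=\lambda^a$, whence $\bar\pi=\bar\pi_a$, $\pi=\pi_a$, and $a\in Sing(X)$; combined with the obvious injectivity of $\psi$ (equal cones have equal vertices and components, hence give back the same transformation) this makes $\psi$ a bijection. This is the analogue of the nontrivial surjectivity step in the proof of Theorem~\ref{thmpx}.

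\emph{Product reversal.} It remains to prove $\lambda^a\cdot\lambda^b=\lambda^{ba}$ for all $a,b\in Sing(X)$. By the composition rule \eqref{eqnsg1}, $\lambda^a\cdot\lambda^b=\lambda^a\ast(\lambda^b(\bar\pi_a))^\circ$, where $\lambda^b(\bar\pi_a)=(b_{\pi_a})^\ast$ with $b_{\pi_a}\colon\pi_b\to\pi_a$, $[y]_{\pi_b}\mapsto[(y)b]_{\pi_a}$. Feeding $\eta=b_{\pi_a}$ into the normal factorization of morphisms in $\Pi(X)$ established above, one finds $\sigma_\eta=\pi_{ba}$ (since $[(x)b]_{\pi_a}=[(y)b]_{\pi_a}$ is equivalent to $(x)(ba)=(y)(ba)$), so the epimorphic component is $(\lambda^b(\bar\pi_a))^\circ=\zeta^\ast u^\ast\colon\bar\pi_a\to\overline{\pi_{ba}}$ with $u\zeta\colon\pi_{ba}\to\pi_a$ equal to $[x]_{\pi_{ba}}\mapsto[(x)b]_{\pi_a}$. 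Starring onto $\lambda^a$, the component of $\lambda^a\ast(\lambda^b(\bar\pi_a))^\circ$ at $\bar\pi_\beta$ is $(a_\beta)^\ast\zeta^\ast u^\ast=(u\zeta a_\beta)^\ast$, and $u\zeta a_\beta\colon[x]_{\pi_{ba}}\mapsto[(x)b]_{\pi_a}\mapsto[((x)b)a]_{\pi_\beta}=[(x)(ba)]_{\pi_\beta}$ is precisely $(ba)_\beta$; hence this component equals $((ba)_\beta)^\ast=\lambda^{ba}(\bar\pi_\beta)$, and matching vertices gives $\lambda^a\cdot\lambda^b=\lambda^{ba}$. Therefore $\psi(ab)=\lambda^{ab}=\lambda^b\cdot\lambda^a=\psi(b)\psi(a)$, so $\psi$ is an anti-isomorphism.

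\emph{Main obstacle.} The delicate part is the product-reversal step: one has to keep the contravariant $(-)^\ast$, the epimorphic-component bookkeeping coming from the normal factorization in $\Pi(X)$, and the behaviour of kernels of products ($\pi_b\subseteq\pi_{ba}$, with $\pi_{ba}$ arising exactly as $\sigma_\eta$) all consistent and in the correct order — keeping every arrow pointing the right way is where the work lies. A close second is the recovery step and the resulting surjectivity of $\psi$; and, as in Theorem~\ref{thmpx}, the normality check for $\lambda^a$ when $\mathrm{Im}\,a$ is large is absorbed into the standing assumptions on $X$.
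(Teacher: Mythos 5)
Your construction is the same as the paper's: your $\lambda^a$ is exactly the paper's $\sigma^a$ (the paper writes the component at $\bar{\pi}_\alpha$ as $(a\alpha)^\ast$, which coincides with your $(a_\beta)^\ast$ once an idempotent representative of the partition is used), and your detailed verification of $\lambda^a\cdot\lambda^b=\lambda^{ba}$ through the normal factorization (computing $\sigma_\eta=\pi_{ba}$ and $u\zeta a_\beta=(ba)_\beta$) is a correct expansion of the multiplication rule that the paper merely asserts. The well-definedness, cone and normality checks are also fine.

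The genuine gap is in your surjectivity (recovery) step. The maximal objects $\overline{\pi_{\{z,z'\}}}$ are pairwise incomparable in $\Pi(X)$: a common refinement of two distinct such partitions is the identity partition, which is not an object, and the cone axiom constrains components only along inclusions, i.e.\ along common coarsenings. So ``the cone relations among these objects'' impose \emph{no} relation at all among the components $\eta_{\{z,z'\}}$; any relation has to be routed through intermediate partitions such as the one with single non-trivial block $\{z,z'\}\cup\{t,t'\}$, and extracting the point $w$ from those constraints is precisely the missing work. The failure is not merely expository: for $X=\{1,2,3\}$ the only object below the three maximal ones is $\overline{\{X\}}$, whose hom-set into any object is a singleton, so the three components of a cone with vertex $\overline{\{\{1,2\},\{3\}\}}$ may be chosen completely independently. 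For instance, taking the identity at the vertex, the constant map onto the block $\{1,3\}$ at $\overline{\{\{1,3\},\{2\}\}}$, and the map $\{1,2\}\mapsto\{2,3\}$, $\{3\}\mapsto\{1\}$ at $\overline{\{\{2,3\},\{1\}\}}$ yields a normal cone equal to no $\lambda^a$; hence no choice of $w$ with the property you describe exists, and your claim that $|X|\ge 3$ suffices is wrong --- the recovery argument needs $|X|\ge 4$ and a genuine pass through the intermediate coarsenings. (For comparison, the paper's own proof does not carry out this step either; it asserts that $a\mapsto\sigma^a$ is a bijection, with the supporting detail deferred to the cited companion work. Since you elected to prove it, this is the point where your proposal breaks down as written.)
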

\begin{proof}
Observe that given any transformation $a \in Sing(X)$, we have a normal cone $\sigma^a$ in $\Pi(X)$ as follows. Define $$\sigma^a(\bar{\pi}_\alpha) = (a\alpha)^\ast\text{ for all } \bar{\pi}_\alpha \in v\Pi(X).$$ 
Then $a\alpha_{|\text{Im}a} \colon \text{ Im }a \to \text{ Im }\alpha$ will give a map $\eta_{a\alpha}\colon [x]_{\pi_a} \mapsto [xa\alpha]_{\pi_\alpha}$ from $\pi_a \to \pi_\alpha$. $(a\alpha)^\ast$ will be a morphism from $\bar{\pi}_\alpha$ to $\bar{\pi}_a$ for all $\bar{\pi}_\alpha \in v\Pi(X)$.

So $\sigma^a$ is a normal cone in $\Pi(X)$ with vertex $\bar{\pi}_a$. Also $\sigma^a.\sigma^b = \sigma^{ba}$. Hence $\sigma^a \mapsto a$ is an anti-isomorphism and so the semigroup $T\Pi(X)$ of all normal cones in $\Pi(X)$ is anti-isomorphic\index{isomorphism!of semigroups} to $Sing(X)$.
\end{proof}

\section{Cross-connections}\label{seccxn}
Having characterized the normal categories in $Sing(X)$ as $\mathscr{P}(X)$ and $\Pi(X)$, now we proceed to discuss the cross-connections between them. But for that, first we need to describe the normal duals of $\mathscr{P}(X)$ and $\Pi(X)$. We show below that $\mathscr{P}(X)$ and $\Pi(X)$ are mutual normal duals of each other.
\subsection{Normal duals}
To characterize the normal dual of $\mathscr{P}(X)$, we define functors $P \colon  N^\ast \mathscr{P}(X) \to \Pi(X) $ and $Q \colon  \Pi(X) \to N^\ast \mathscr{P}(X)$ as follows. For $ H(e;-) \in vN^\ast \mathscr{P}(X)$ and $\sigma \colon  H(e;-) \to H(f;-)$,
\begin{equation} \label{eqnP}
P (H(e;-)) = \bar{\pi}_e \quad\text{and}  \quad
P(\sigma) = w^\ast
\end{equation}
where $\pi_e$ is the partition of $X$ induced by the transformation $e$, and $w^\ast$ is given as follows. By Lemma \ref{lemH2}, $\sigma$ induces a unique map $w \colon  \text{ Im }f \to \text{ Im }e$. Then $w^\ast \colon  \bar{\pi}_{e} \to \bar{\pi}_{f} $ is determined by the map $\eta_w\colon  \pi_f \to \pi_e$ induced by $w$. Further $Q$ is defined by
\begin{equation} \label{eqnQ}
Q (\bar{\pi}) = H(e;-) \quad\text{and}  \quad
Q(\eta^\ast) = \sigma
\end{equation}
where $e$ is an idempotent mapping such that $\pi_e = \pi$. Observe that that if $\pi_e=\pi_f$, then $e\mathscr{R}f$ and so $H(e;-)=H(f;-)$ \cite{cross}. For ${\eta}\colon \pi_f \to \pi_e$, $\sigma\colon  H(e;-) \to H({f} ;-)$ is defined by $ a \mapsto {wa} $ where $w\colon  \text{ Im }f \to \text{ Im }e$ is determined uniquely by $\eta$.

$PQ$ is a functor from $N^\ast \mathscr{P}( X)$ to itself such that $PQ = 1_{N^\ast \mathscr{P}( X)}$. Similarly $QP$ is a functor from $\Pi(X)$ to itself such that $QP = 1_{\Pi(X)}$.
Since $P$ is an order isomorphism preserving the inclusions, $N^\ast \mathscr{P}(X)$ is isomorphic to $\Pi(X)$ as normal categories.
\begin{thm} \cite{pix}\label{thmP}
The normal dual $N^\ast \mathscr{P}(  X )$ is isomorphic to the category of partitions $\Pi(X)$. \end{thm}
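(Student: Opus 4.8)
The plan is to prove the statement by verifying that the two functors $P\colon N^\ast\mathscr{P}(X)\to\Pi(X)$ and $Q\colon\Pi(X)\to N^\ast\mathscr{P}(X)$ introduced in (\ref{eqnP}) and (\ref{eqnQ}) are mutually inverse isomorphisms of categories with subobjects. Lemmas \ref{lemH} and \ref{lemH2} already give completely explicit descriptions of the objects and morphisms of $N^\ast\mathscr{P}(X)$, so the whole argument reduces to a sequence of bookkeeping verifications; the only genuine subtlety is keeping track of the several direction-reversals hidden in the assignments $e\mapsto\bar\pi_e$, $w\mapsto\eta_w$ and $\eta\mapsto\eta^\ast$.

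First I would settle the object level. By Lemma \ref{lemH} the functor $H(e;-)$ depends on the idempotent $e$ only through its partition $\pi_e$, so $H(e;-)\mapsto\bar\pi_e$ is well defined; conversely, as recorded after (\ref{eqnQ}), $\pi_e=\pi_f$ forces $e\,\mathscr{R}\,f$ and hence $H(e;-)=H(f;-)$, so the choice of an idempotent $e$ with $\pi_e=\pi$ in the definition of $Q$ is immaterial. These assignments are visibly inverse to one another, giving a bijection $vN^\ast\mathscr{P}(X)\leftrightarrow v\Pi(X)$. For morphisms, given $\sigma\colon H(e;-)\to H(f;-)$, Lemma \ref{lemH2} produces a unique $w\in f(Sing(X))e$ with $\sigma(C)\colon a\mapsto wa$; restricting $w$ to a map $\mathrm{Im}\,f\to\mathrm{Im}\,e$ and transporting it across the canonical bijections $\pi_f\leftrightarrow\mathrm{Im}\,f$, $\pi_e\leftrightarrow\mathrm{Im}\,e$ yields $\eta_w\colon\pi_f\to\pi_e$, and we set $P(\sigma)=\eta_w^\ast=w^\ast$. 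Faithfulness of $P$ on hom-sets is exactly the uniqueness clause of Lemma \ref{lemH2}, while fullness holds because every morphism of $\Pi(X)$ is of the form $\eta^\ast$ and every $\eta\colon\pi_f\to\pi_e$ is realised by a suitable $w$. For functoriality one checks that $1_{H(e;-)}$ corresponds to $w=e$, whose induced map on $\pi_e$ is the identity; and that if $\sigma_1,\sigma_2$ correspond to $w_1,w_2$, then the composite $\sigma_1\sigma_2$ corresponds to the product $w_2w_1$, while on partitions $\eta_{w_2w_1}=\eta_{w_2}\eta_{w_1}$, so that $P(\sigma_1\sigma_2)=(w_2w_1)^\ast=w_1^\ast w_2^\ast=P(\sigma_1)P(\sigma_2)$ — the reversal in $w_2w_1$ and the reversal built into $(-)^\ast$ cancel, and $P$ comes out covariant, consistent with the composition convention of $\Pi(X)$. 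The functor $Q$ is then literally the inverse assignment ($\eta^\ast\mapsto$ the natural transformation $a\mapsto wa$ for a $w$ inducing $\eta$), and $PQ=1_{N^\ast\mathscr{P}(X)}$, $QP=1_{\Pi(X)}$ drop out by matching these descriptions.

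It remains to check compatibility with the subobject structures. One first makes the inclusion relation of the abstract normal dual $N^\ast\mathscr{P}(X)$ (which exists by Theorem \ref{thm1}) concrete: $H(e;-)$ is a subobject of $H(f;-)$ precisely when the appropriate containment of $H$-sets holds, and the associated inclusion morphism is induced by an idempotent $w$; restricting $w$ to images and passing to $\eta_w$ gives exactly the inclusion map of partitions $\nu\colon\pi_f\to\pi_e$ used to define the inclusions $\nu^\ast$ of $\Pi(X)$. Hence $P$ sends inclusions to inclusions and, being a bijection on objects, it is an order isomorphism of the subobject preorders; the same then holds for $Q$. A fully faithful, bijective-on-objects, inclusion-preserving functor with a two-sided inverse is an isomorphism of categories with subobjects, and since both $N^\ast\mathscr{P}(X)$ and $\Pi(X)$ are normal categories this yields $N^\ast\mathscr{P}(X)\cong\Pi(X)$ as normal categories.

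I expect the main obstacle to be precisely this last identification together with the composition check: one must pin down the inclusion relation and the composition of natural transformations in the functor-category $N^\ast\mathscr{P}(X)$ explicitly enough to trace them faithfully through the chain $\sigma\leftrightarrow w\leftrightarrow\eta_w\leftrightarrow w^\ast$, and it is exactly here that an overlooked direction reversal would spoil functoriality or inclusion-preservation. Nothing in the argument is deep, but the cascade of dualities makes it error-prone.
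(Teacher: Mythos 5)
Your proposal is correct and follows essentially the same route as the paper: the paper likewise establishes Theorem~\ref{thmP} by introducing the functors $P$ and $Q$ of (\ref{eqnP})--(\ref{eqnQ}) via Lemmas~\ref{lemH} and~\ref{lemH2}, observing $PQ=1_{N^\ast\mathscr{P}(X)}$ and $QP=1_{\Pi(X)}$, and noting that $P$ is an inclusion-preserving order isomorphism, hence an isomorphism of normal categories. Your write-up merely makes explicit the bookkeeping (well-definedness on objects, functoriality with the cancelling direction reversals, and the identification of inclusions) that the paper leaves to the cited reference.
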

By duality\index{dual}, we can characterize the normal dual $N^\ast\Pi(X)$ of the partition category. Each normal cone in $\Pi(X)$ can be realized using a transformation $a \in Sing(X)$ and the $H$-functor $H(e;-) \in vN^\ast\Pi(X)$ can be seen as completely characterized by Im $e$. Corresponding to $\sigma\colon H(e;-) \to H(f;-)$ in $N^\ast\Pi(X)$, we have a unique morphism $\eta^\ast\colon \bar{\pi}_{f} \to \bar{\pi}_{e}$ in $\Pi(X)$ and $\eta^\ast$ will determine a map $\eta = \eta_w \colon  \pi_e \to \pi_f$; which in turn gives a unique map $w\colon  \text{ Im }e \to \text{ Im } f$. Hence we can define a functor $R\colon  N^\ast \Pi(X) \to \mathscr{P}(X)$ as follows.
\begin{equation} \label{eqnR}
R (H(e;-)) = \text{ Im }e \quad\text{ and }  \quad
R(\sigma) = w
\end{equation}
We can show that $R$ is an inclusion preserving covariant functor which is an order isomorphism, $v$-injective, $v$-surjective and fully faithful. Hence $N^\ast \Pi(X)$  is isomorphic to the powerset category $\mathscr{P}(X)$. And hence we have the following theorem.
\begin{thm} \label{thmP2}
The normal dual $N^\ast \Pi(X)$ is isomorphic to the powerset category $\mathscr{P}(X)$.
\end{thm}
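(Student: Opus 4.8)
The plan is to invoke the duality principle of Remark~\ref{rmkdual} together with Theorem~\ref{thmP}, and then to carry out the direct verification already foreshadowed by the definition of the functor $R$ in~(\ref{eqnR}). Concretely, I would first pin down the objects of $N^\ast\Pi(X)$. Since $T\Pi(X)$ is anti-isomorphic to $Sing(X)$, the idempotent normal cones of $\Pi(X)$ are exactly the cones $\sigma^e$ with $e\in E(Sing(X))$, so that $vN^\ast\Pi(X)=\{H(\epsilon;-):\epsilon\in E(T\Pi(X))\}$ is parametrised by idempotent transformations of $X$. The key object-level fact, dual to Lemma~\ref{lemH}, is that $H(e;-)$ depends on $e$ only through $\mathrm{Im}\,e$ (equivalently, $e\mathscr{L}f$ implies $H(e;-)=H(f;-)$); this is what makes $R$ well defined on objects and simultaneously $v$-injective. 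For $v$-surjectivity one observes that every proper nonempty $A\subseteq X$ equals $\mathrm{Im}\,e$ for the idempotent $e$ retracting $X$ onto $A$, so $R(H(e;-))=A$.

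Next I would treat the morphisms. By Theorem~\ref{thm1} applied to the normal category $\Pi(X)$, every morphism $\sigma\colon H(e;-)\to H(f;-)$ in $N^\ast\Pi(X)$ is encoded by a unique $\hat\sigma\colon \bar\pi_{f}\to\bar\pi_{e}$ in $\Pi(X)$, and by the definition of $\Pi(X)$ such a morphism is $\eta^\ast$ for a unique $\eta\colon \pi_e\to\pi_f$. Because $e$ is idempotent, $\mathrm{Im}\,e$ is a cross-section of $\pi_e$, so there is a canonical bijection between $\pi_e$ and $\mathrm{Im}\,e$; transporting $\eta$ along these bijections yields a unique $w=w_\eta\colon \mathrm{Im}\,e\to\mathrm{Im}\,f$. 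Setting $R(\sigma)=w$ as in~(\ref{eqnR}), fullness and faithfulness of $R$ reduce to the fact that each of the passages $\sigma\leftrightarrow\hat\sigma$, $\hat\sigma=\eta^\ast\leftrightarrow\eta$, $\eta\leftrightarrow w$ is a bijection. Functoriality, i.e. $R(1_{H(e;-)})=1_{\mathrm{Im}\,e}$ and $R(\sigma\tau)=R(\sigma)R(\tau)$, then follows by chasing the composites; here one must be attentive to the two order reversals --- the one built into morphisms of $\Pi(X)$ and the one built into morphisms of $N^\ast\Pi(X)$ via $\hat\sigma$ --- which cancel so that $R$ comes out covariant. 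It remains to check that $R$ preserves inclusions and restricts to an isomorphism $\langle H(e;-)\rangle\to\langle\mathrm{Im}\,e\rangle$ of principal ideals: an inclusion $H(e;-)\to H(f;-)$ in $N^\ast\Pi(X)$ corresponds to the inclusion $\eta^\ast=j(\bar\pi_f,\bar\pi_e)$ and hence to the set inclusion $\mathrm{Im}\,e\subseteq\mathrm{Im}\,f$, so $R$ is an order embedding of $(vN^\ast\Pi(X),\subseteq)$ into $(v\mathscr{P}(X),\subseteq)$; combined with $v$-surjectivity it is an order isomorphism, and full faithfulness upgrades the ideal restrictions to isomorphisms. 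An inclusion-preserving, fully faithful functor that is bijective on objects and an isomorphism on every principal ideal is a local isomorphism (Definition~\ref{lociso}) and, being a $v$-bijection, an isomorphism of normal categories; this gives $N^\ast\Pi(X)\cong\mathscr{P}(X)$.

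I expect the main obstacle to be the morphism bookkeeping in the previous paragraph, namely verifying that the three-step passage $\sigma\mapsto\hat\sigma\mapsto\eta\mapsto w$ really is natural and bijective and that it converts composition in $N^\ast\Pi(X)$ into composition in $\mathscr{P}(X)$ without a variance error. The object-level claims ($v$-injectivity via $H(e;-)$ depending only on $\mathrm{Im}\,e$, $v$-surjectivity, inclusion preservation) are routine once the dual of Lemma~\ref{lemH} is in hand, and indeed much of this could alternatively be deduced \emph{en bloc} from Theorem~\ref{thmP} and Remark~\ref{rmkdual} by observing that $\mathscr{P}(X)$ and $\Pi(X)$ are essentially the categories $\mathcal{L}$ and $\mathcal{R}$ of principal left and right ideals of $Sing(X)$, to which the general duality $N^\ast\mathcal{R}\cong\mathcal{L}$ applies; I would, however, prefer to keep the argument self-contained by exhibiting $R$ explicitly as above.
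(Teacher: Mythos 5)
Your proposal follows essentially the same route as the paper: identify each $H$-functor $H(e;-)\in vN^\ast\Pi(X)$ with $\mathrm{Im}\,e$, use Theorem~\ref{thm1} to pass $\sigma\mapsto\hat\sigma=\eta^\ast\mapsto\eta\mapsto w$, and exhibit the explicit functor $R$ of~(\ref{eqnR}) as an inclusion-preserving, fully faithful, $v$-bijective (hence normal category) isomorphism, with the general duality of Remark~\ref{rmkdual} available as a shortcut. Your bookkeeping of the two order reversals and of the cross-section bijection $\pi_e\leftrightarrow\mathrm{Im}\,e$ is exactly the verification the paper leaves implicit, so the argument is correct and matches the paper's proof.
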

In the next lemma, we identify the $M$-set\index{$M$-set} associated with a cone $\sigma$ in $\mathscr{P}(X)$, with a cross-section of a partition.
\begin{lem}\label{lemm}
Let $\sigma$ be a normal cone in $\mathscr{P}(X)$. Then 
$$M\sigma \:=  \{ A\subseteq X : A \text{ is a cross-section of } \pi_a \}$$ where $\sigma = \rho^a$ for some $a \in Sing(X)$ and $\pi_a$ is the partition of $a$.
\end{lem}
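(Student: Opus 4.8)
The plan is to unwind the definitions on both sides and show the two sets coincide by mutual inclusion. Recall that by the discussion preceding Theorem~\ref{thmpx}, every normal cone $\sigma$ in $\mathscr{P}(X)$ is of the form $\rho^a$ for a uniquely determined transformation $a\colon X\to A$, where $A$ is the vertex of $\sigma$ and $(x)a = (x)\sigma(\{x\})$ for all $x\in X$; moreover $\operatorname{Im}a = A$ since $\sigma(A)\colon A\to A$ is an isomorphism (indeed $A\in M\sigma$) and the cone condition forces $\sigma(A)$ to be the identity up to the retraction relating $\{x\}\subseteq A$. So fix such an $a$ with partition $\pi_a$ on $X$. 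The first step is to make explicit, for an arbitrary subobject $C\in v\mathscr{P}(X)$, what the component $\sigma(C) = \rho^a(C)\colon C\to A$ is: using the defining relation $j(\{x\},C)\sigma(C) = \sigma(\{x\})$ for each $x\in C$, one gets $(x)\sigma(C) = (x)\sigma(\{x\}) = (x)a$, i.e.\ $\sigma(C) = a_{|C}$, the restriction of $a$ to $C$.

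Given this, membership $C\in M\sigma$ means precisely that $a_{|C}\colon C\to A$ is a bijection onto $A=\operatorname{Im}a$. The second step is the routine set-theoretic observation that a restriction $a_{|C}$ of a surjection $a\colon X\to A$ is a bijection onto $A$ if and only if $C$ meets every fibre $(y)a^{-1}$ (surjectivity of $a_{|C}$) in exactly one point (injectivity of $a_{|C}$) — that is, if and only if $C$ is a cross-section of the partition $\pi_a = \{(y)a^{-1} : y\in A\}$. This gives both inclusions at once: if $C\in M\sigma$ then $a_{|C}$ is bijective onto $A$, hence $C$ is a cross-section of $\pi_a$; conversely if $C$ is a cross-section of $\pi_a$, then $a_{|C}$ hits each fibre exactly once, so it is a bijection $C\to A$, so $\sigma(C) = a_{|C}$ is an isomorphism in $\mathscr{P}(X)$ and $C\in M\sigma$.

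One technical point to verify along the way is that any cross-section $C$ of $\pi_a$ is indeed a \emph{proper} subset of $X$, so that $C\in v\mathscr{P}(X)$ and the statement is not vacuous on that side: this holds because $\pi_a$ is a non-identity partition (as $a\in Sing(X)$ is non-invertible, some fibre has at least two points), so any cross-section omits at least one point of $X$. I expect the only mild obstacle to be the bookkeeping in Step~1 — pinning down $\sigma(C) = a_{|C}$ cleanly from the cone axiom and confirming $\operatorname{Im}a$ equals the vertex $A$ — after which the equivalence ``$a_{|C}$ bijective $\iff$ $C$ a cross-section'' is immediate. Everything else is a direct translation of definitions, so the lemma follows.
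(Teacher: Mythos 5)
Your route is sound and genuinely different from the paper's. The paper computes nothing inside $\mathscr{P}(X)$: it quotes the general fact from Nambooripad's theory \cite{cross} that for a regular semigroup $S$ one has $M\rho^a = \{Se : e \in E(R_a)\}$, and then translates $\{Sb : \pi_b = \pi_a,\ b^2 = b\}$ into cross-sections of $\pi_a$ under the identification of principal left ideals with subsets of $X$. You instead argue directly in the category: the cone axiom applied to the singletons $\{x\}\subseteq C$ gives $\sigma(C) = a_{|C}$, and then $C\in M\sigma$ iff $a_{|C}$ is a bijection onto the vertex iff $C$ meets every fibre of $a$ exactly once, i.e.\ is a cross-section of $\pi_a$. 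Your argument is self-contained and makes the lemma transparent, at the cost of a little bookkeeping; the paper's proof is shorter but leans entirely on the general machinery.

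One claim in your write-up is false and must be repaired, though the repair is a one-liner. You assert that $\sigma(A)\colon A\to A$ is an isomorphism and ``indeed $A\in M\sigma$'', where $A$ is the vertex. The vertex of $\rho^a$, namely $\operatorname{Im}a$, need not be a cross-section of $\pi_a$: on $X=\{1,2,3,4\}$ take $a$ with $1,2,3\mapsto 2$ and $4\mapsto 3$; then $\operatorname{Im}a=\{2,3\}$ while $\pi_a=\{\{1,2,3\},\{4\}\}$, so $a_{|\operatorname{Im}a}$ is not injective and $\operatorname{Im}a\notin M\sigma$. What your proof actually needs is only the equality $\operatorname{Im}a = A$, and that follows from normality of the cone: some component $\sigma(C)=a_{|C}$ is an isomorphism onto $A$, hence $A=\operatorname{Im}a_{|C}\subseteq \operatorname{Im}a\subseteq A$. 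With that substitution the remainder of your argument (a restriction of $a$ is bijective onto $\operatorname{Im}a$ iff its domain meets each fibre exactly once, plus the properness remark guaranteeing the cross-sections lie in $v\mathscr{P}(X)$) goes through and establishes the lemma.
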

\begin{proof}
Let $\sigma = \rho^a$ be a normal cone in $\mathscr{P}(X)$. For a regular semigroup $S$, $M{\rho^a} = \{ Se : e \in E(R_a) \}$ where $R_a$ is the $\mathscr{R}$-class of $S$ containing $a$ \cite{cross}.\\
So $M\sigma \:= \{ S b \: : \:  b \in E(R_a) \}$. Hence 
$$M\sigma \:= \{ S b \: : \: \pi_ b = \pi_a \text{ and } b^2 =  b \} \:=\: \{ A\: :\: A \text{ is a cross-section of }\pi_a \}.$$ 
\end{proof}
Thus, given $\sigma \in E(T\mathscr{P}(X))$, we have a natural identification of the $H$-functor $H(\sigma;-)$ with a partition $\pi$; and henceforth we write $M(\pi)$ for $MH(\sigma;-) = M\sigma$.\vspace*{.5cm}

Now we are ready with all the necessary ingredients to discuss the cross-connections between $\mathscr{P}(X)$ and $\Pi(X)$. Since $N^\ast\mathscr{P}(X)$ is isomorphic to $\Pi(X)$ (by Theorem \ref{thmP}), the local isomorphism $\Gamma\colon  \Pi(X) \to N^\ast\mathscr{P}(X)$ of a cross-connection from $\Pi(X)$ to $\mathscr{P}(X)$ can be taken as a local isomorphism $\Gamma\colon  \Pi(X) \to \Pi(X)$. Using Lemma \ref{lemm}, we have the following.
\begin{lem}\label{lemcxn}
$(\Pi(X),\mathscr{P}(X);\Gamma)$ is a cross-connection if $\Gamma\colon  \Pi(X) \to \Pi(X)$ is a local isomorphism such that for every $A \in v\mathscr{P}(X)$, there is some $\bar{\pi} \in v\Pi(X)$ such that $A $ is a cross-section of $\Gamma(\bar{\pi})$.
\end{lem}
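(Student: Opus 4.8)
The plan is to recognise Lemma~\ref{lemcxn} as nothing more than Definition~\ref{cxn} transported along the isomorphism $N^\ast\mathscr{P}(X)\cong\Pi(X)$ of Theorem~\ref{thmP} together with the identification of $M$-sets with cross-sections supplied by Lemma~\ref{lemm}. By Definition~\ref{cxn}, a cross-connection from $\Pi(X)$ to $\mathscr{P}(X)$ is a triplet $(\Pi(X),\mathscr{P}(X);\Gamma')$ in which $\Gamma'\colon\Pi(X)\to N^\ast\mathscr{P}(X)$ is a local isomorphism and, for every $A\in v\mathscr{P}(X)$, there is some $\bar\pi\in v\Pi(X)$ with $A\in M\Gamma'(\bar\pi)$. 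So, starting from a local isomorphism $\Gamma\colon\Pi(X)\to\Pi(X)$ with the stated cross-section property, I would build the corresponding $\Gamma'$ and check the two clauses of Definition~\ref{cxn} in turn.

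First I would put $\Gamma'=\Gamma Q$, where $Q\colon\Pi(X)\to N^\ast\mathscr{P}(X)$ is the isomorphism of normal categories of \eqref{eqnQ}. Being an isomorphism of normal categories, $Q$ is inclusion preserving, fully faithful, and restricts to an isomorphism on every principal ideal; and one checks straight from Definition~\ref{lociso} that post-composing a local isomorphism with such a functor again gives a local isomorphism. Hence $\Gamma'$ is a local isomorphism $\Pi(X)\to N^\ast\mathscr{P}(X)$. Conversely, since $\Gamma=\Gamma'P$ recovers $\Gamma$ from $\Gamma'$, nothing is lost by the convention, announced just before the lemma, of describing the cross-connection by the self-functor $\Gamma$ of $\Pi(X)$.

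Next I would translate the $M$-set clause. Fix $\bar\pi\in v\Pi(X)$ and write $\Gamma(\bar\pi)=\bar\tau$ for the relevant non-identity partition $\tau$ of $X$; choose an idempotent $e\in Sing(X)$ with $\pi_e=\tau$, so that $\Gamma'(\bar\pi)=Q(\bar\tau)=H(e;-)$ by \eqref{eqnQ}. By Lemma~\ref{lemm} and the identification recorded immediately afterwards, $M\Gamma'(\bar\pi)=MH(e;-)=M(\tau)$ is precisely the family of cross-sections of $\tau$. Therefore $A\in M\Gamma'(\bar\pi)$ holds if and only if $A$ is a cross-section of $\Gamma(\bar\pi)$, and substituting this equivalence into the second clause of Definition~\ref{cxn} turns it verbatim into the hypothesis of Lemma~\ref{lemcxn}. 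Combined with the first step, this shows $(\Pi(X),\mathscr{P}(X);\Gamma)$ is a cross-connection.

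I do not anticipate a genuine obstacle: the whole argument is bookkeeping with the three dictionaries $N^\ast\mathscr{P}(X)\leftrightarrow\Pi(X)$, objects of $N^\ast\mathscr{P}(X)\leftrightarrow$ partitions of $X$, and $M$-sets $\leftrightarrow$ cross-sections. The only point that deserves care is the compatibility of $Q$ with $M$-sets --- i.e. confirming that $Q$ carries $\bar\tau$ to an $H$-functor whose $M$-set is literally the set of cross-sections of $\tau$ and not some differently transported set --- which is exactly why I would route the second step explicitly through Lemma~\ref{lemm}; a secondary minor nuisance is keeping the left-to-right composition convention straight when asserting that $\Gamma Q$ inherits the local-isomorphism property.
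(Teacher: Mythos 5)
Your proposal is correct and follows essentially the same route as the paper, which simply invokes the isomorphism $N^\ast\mathscr{P}(X)\cong\Pi(X)$ of Theorem~\ref{thmP} to regard the local isomorphism as a self-functor of $\Pi(X)$ and then applies Lemma~\ref{lemm} to translate the $M$-set condition of Definition~\ref{cxn} into the cross-section condition. Your version merely spells out the bookkeeping (the functor $Q$, the choice of idempotent $e$ with $\pi_e=\tau$) that the paper leaves implicit.
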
 
Now given a cross-connection $\Gamma \colon  \Pi(X) \to \Pi(X)\cong N^\ast\mathscr{P}(X) $, we have a bifunctor\index{functor!bifunctor} $\Gamma \colon  \mathscr{P}(X) \times \Pi(X) \to \bf{Set}$ as follows. For all $(A,\bar{\pi}) \in v\mathscr{P}(X) \times v\Pi(X)$ and $(f,v^\ast)\colon (A,\bar{\pi}) \to (B,\bar{\pi}')$ 
\begin{subequations}
\begin{align}
\Gamma (A,\bar{\pi})\:&=\: \{ a \in Sing(X) : \text{ Im } a \subseteq A \text{ and } \bar{\pi}_{a} \subseteq \Gamma(\bar{\pi}) \}\label{eqngx1}\\
\Gamma (f,v^\ast)\:& : \: a \mapsto (ua)f = u(a f)\label{eqngx2}
\end{align}
\end{subequations}
where $u^\ast = \Gamma(v^\ast)$.

By Remark \ref{rmkdual}, given a cross-connection $(\Pi(X),\mathscr{P}(X);\Gamma)$, there exists a unique dual cross-connection $\Delta \colon  \mathscr{P}(X) \to \mathscr{P}(X)$. Just as in the case of $\Gamma$, we have a bifunctor $\Delta \colon  \mathscr{P}(X) \times \Pi(X) \to \bf{Set}$ as follows. For all $(A,\bar{\pi}) \in v\mathscr{P}(X) \times v\Pi(X)$ and $(f,v^\ast)\colon (A,\bar{\pi}) \to (B,\bar{\pi}')$
\begin{subequations}
\begin{align}
\Delta (A,\bar{\pi})\:&=\: \{ a \in Sing(X) : \text{ Im } a \subseteq \Delta(A) \text{ and } \bar{\pi}_{a} \subseteq \bar{\pi} \}\label{eqndx1}\\
\Delta (f,v^\ast)\:& : \: a \mapsto (va)g = v(a g)\label{eqndx2}
\end{align}
\end{subequations}
where $g = \Delta(f)$.

\subsection{Cross-connections induced by permutations on $X$}\label{secper}
Recall that a \emph{permutation}\index{permutation} on a set $X$ is an injective map from $X$ onto itself. In this section, we show that given a permutation $\theta$ on $X$, we can define a cross-connection from the powerset category $\mathscr{P}(X)$ to the partition category $\Pi(X)$ induced by $\theta$. In the sequel, since $\theta$ is often used to represent a functor, we write the argument on the right side of $\theta$ for uniformity; but the order of composition is from left to right as earlier. First we see that a permutation $\theta$ induces an isomorphism $\theta_{\mathscr{P}}$ on the powerset category $\mathscr{P}(X)$. 
\begin{pro}\label{prorps}
Let $\theta$ be a permutation on the set $X$. Define a functor $\theta_{\mathscr{P}} \colon  \mathscr{P}(X) \to \mathscr{P}(X)$ as follows. For a subset $A$ of $X$, define $\theta_{\mathscr{P}}\colon A \mapsto \theta(A)$ and for $f\colon A\to B$ in $\mathscr{P}(X)$, define $ \theta_{\mathscr{P}}(f) = \theta(f)= \theta^{-1} f \theta \colon  \theta(A) \to \theta(B)$. Then $\theta_{\mathscr{P}}$ is a normal category isomorphism\index{isomorphism!of normal categories} on $\mathscr{P}(X)$.
\end{pro}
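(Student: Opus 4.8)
The plan is to verify, in order, the defining properties of a normal category isomorphism for the functor $\theta_{\mathscr{P}}$: that it is a well-defined covariant functor, that it is a bijection on objects and on morphisms (hence a category isomorphism), and that it respects the subobject structure, i.e.\ it preserves inclusions, splittings and normal factorizations. First I would check functoriality. Since $\theta$ is a bijection on $X$, for $A \subseteq X$ the image $\theta(A)$ is again a proper subset of $X$ (properness is preserved because $\theta$ is injective and surjective), so $\theta_{\mathscr{P}}$ maps $v\mathscr{P}(X)$ to itself. For a morphism $f\colon A \to B$, the composite $\theta^{-1} f \theta$ is genuinely a function from $\theta(A)$ to $\theta(B)$: an element of $\theta(A)$ is $\theta(x)$ for a unique $x \in A$, it is sent to $(x)f \in B$ by $\theta^{-1}$ applied first, and then $\theta$ lands it in $\theta(B)$. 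Functoriality then follows from a short computation: $\theta_{\mathscr{P}}(1_A) = \theta^{-1} 1_A \theta = 1_{\theta(A)}$, and for $f\colon A\to B$, $g\colon B\to C$ we get $\theta_{\mathscr{P}}(fg) = \theta^{-1} f g \theta = (\theta^{-1} f \theta)(\theta^{-1} g \theta) = \theta_{\mathscr{P}}(f)\theta_{\mathscr{P}}(g)$, using $\theta\theta^{-1} = 1$.

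Next I would establish bijectivity. On objects, $A \mapsto \theta(A)$ is a bijection of the set of proper subsets of $X$ onto itself with inverse $B \mapsto \theta^{-1}(B)$. On hom-sets, the assignment $f \mapsto \theta^{-1} f \theta$ from $\mathscr{P}(X)(A,B)$ to $\mathscr{P}(X)(\theta(A),\theta(B))$ has the obvious two-sided inverse $h \mapsto \theta h \theta^{-1}$, so $\theta_{\mathscr{P}}$ is fully faithful; combined with $v$-bijectivity this makes it a category isomorphism. It is worth noting that this already forces isomorphisms to go to isomorphisms and retractions to retractions, which will streamline the subobject verification.

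Then I would treat the subobject structure. If $A \subseteq B$, then $\theta(A) \subseteq \theta(B)$ since $\theta$ is monotone with respect to inclusion, and $\theta_{\mathscr{P}}(j(A,B)) = \theta^{-1} j(A,B) \theta$ is exactly the inclusion $j(\theta(A),\theta(B))$ — one checks it sends $\theta(x) \mapsto x \mapsto x \mapsto \theta(x)$ for $x \in A$. So $\theta_{\mathscr{P}}$ restricts to an isomorphism of the preorder $\mathcal{P}$ of inclusions, hence is inclusion preserving; and since $v$-bijectivity gives $\theta_{\mathscr{P}}(\langle A\rangle) = \langle \theta(A)\rangle$, we in fact have a (normal category) isomorphism in the strong sense. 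Finally, because $\theta_{\mathscr{P}}$ is an isomorphism of categories with subobjects, it automatically carries a normal factorization $f = quj$ of any morphism to a normal factorization $\theta_{\mathscr{P}}(f) = \theta_{\mathscr{P}}(q)\theta_{\mathscr{P}}(u)\theta_{\mathscr{P}}(j)$ of $\theta_{\mathscr{P}}(f)$ — with $\theta_{\mathscr{P}}(q)$ a retraction, $\theta_{\mathscr{P}}(u)$ an isomorphism and $\theta_{\mathscr{P}}(j)$ an inclusion — and likewise transports normal cones to normal cones, so all three axioms of a normal category are visibly preserved. Concretely, one can check directly that $\pi_{\theta^{-1} f \theta} = \theta(\pi_f)$ and $\mathrm{Im}(\theta^{-1} f\theta) = \theta(\mathrm{Im}\, f)$, which confirms the normal factorization is transported as claimed.

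I do not anticipate a genuine obstacle here: every step is a routine diagram chase once one is careful about the left-to-right composition convention and the distinction between $\theta$ acting on points of $X$ and $\theta_{\mathscr{P}}$ acting on $\mathscr{P}(X)$. The only point demanding a little care is the bookkeeping in $\theta^{-1} f \theta$ — making sure the conjugation is on the correct side so that it genuinely defines a map $\theta(A) \to \theta(B)$ and not $\theta(B)\to\theta(A)$ — and verifying that properness of subsets (needed so we stay inside $v\mathscr{P}(X)$, which excludes $X$ itself) is preserved, which it is precisely because $\theta$ is a permutation rather than an arbitrary injection.
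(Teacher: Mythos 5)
Your proposal is correct and follows essentially the same route as the paper's proof: bijectivity on objects from the bijectivity of $\theta$, functoriality of conjugation $f \mapsto \theta^{-1}f\theta$ via $\theta\theta^{-1}=1$, fullness and faithfulness via the inverse conjugation $h \mapsto \theta h\theta^{-1}$, and preservation of inclusions together with $\theta_{\mathscr{P}}(j(A,B)) = j(\theta(A),\theta(B))$. The extra remarks you add (identity morphisms, transport of normal factorizations and cones, preservation of properness) are harmless elaborations of the same argument.
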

\begin{proof}
Clearly for $A \in \mathscr{P}(X)$, $\theta_{\mathscr{P}}(A) = \theta(A) \in \mathscr{P}(X)$. Since $\theta$ is injective, $\theta_{\mathscr{P}}$ is injective. $\theta_{\mathscr{P}}$ is onto, since $\theta$ is onto. Hence $\theta_{\mathscr{P}} \colon v\mathscr{P}(X) \to v\mathscr{P}(X)$ is a bijection on $v\mathscr{P}(X)$. Now given $f\colon A\to B$ in $\mathscr{P}(X)$, $ \theta^{-1} f \theta$ is a map $\theta(f)$ from $\theta(A)$ to $\theta(B)$.

Also if $f\colon A \to B$ and $g\colon B \to C$, then $ \theta(f\circ g) = \theta^{-1} (f\circ g) \theta = \theta^{-1} f (\theta \theta^{-1}) g \theta = (\theta^{-1} f \theta)(\theta^{-1} g \theta) = \theta(f)\circ\theta(g)$. Hence $\theta_{\mathscr{P}}$ is a covariant functor on $\mathscr{P}(X)$.

Also given $g \colon  \theta(A) \to \theta(B)$, we see that $\theta g\theta^{-1}$ is a map from $A \to B$; and $\theta(\theta g\theta^{-1}) = \theta^{-1}\theta g\theta^{-1}\theta = g$; and hence $\theta_{\mathscr{P}}$ is full. Also if $\theta^{-1} f \theta = \theta^{-1} g \theta$, then $f=g$ and so $\theta_{\mathscr{P}}$ is faithful.

Now $A\subseteq B$ if and only if $\theta(A) \subseteq \theta(B)$. Also $\theta(j(A,B)) = j(\theta(A),\theta(B))$. Thus $\theta_{\mathscr{P}}$ is a normal category isomorphism on $\mathscr{P}(X)$. 
\end{proof}
When there is no ambiguity regarding notations, we shall denote $\theta_{\mathscr{P}}$ by $\theta$ itself, as above. Now we see that a permutation $\theta$ induces an isomorphism $\theta$ on the partition category $\Pi(X)$ of $X$. For that we need the following lemma which describes a permutation $\theta$ on the set of partitions of $X$.
\begin{lem}\label{lemrp1}
Let $\theta$ be a permutation on the set $X$ and $\sigma = \{ A_i : i \in I \} $ be a partition of $X$. Then $\theta^{-1}(\sigma)$ defined as follows is also a partition of $X$.
\begin{equation}\label{eqnrpt}
\theta ^{-1}(\sigma)\: = \: \{\theta^{-1}(A_i) : i \in I\}
\end{equation}
\end{lem}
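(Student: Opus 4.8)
The plan is to verify directly the two defining properties of a partition, namely that the sets $\theta^{-1}(A_i)$ are pairwise disjoint and that their union is all of $X$. Since $\theta$ is a permutation (a bijection), this amounts to the standard fact that the preimage of a partition under a bijection is again a partition; the only thing to be careful about is keeping the notation consistent with the convention in the paper that functions act on the right.

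First I would check disjointness. Suppose $x \in \theta^{-1}(A_i) \cap \theta^{-1}(A_j)$. Then the image of $x$ under $\theta$ lies in both $A_i$ and $A_j$; since $\sigma$ is a partition, $A_i \cap A_j = \phi$ unless $i = j$, so we conclude $i = j$. Next I would check that the union covers $X$. Given any $x \in X$, its image under $\theta$ lies in some $A_i$ because $\sigma$ is a partition of $X$; hence $x \in \theta^{-1}(A_i)$, and so $x \in \bigcup_{i \in I} \theta^{-1}(A_i)$. The reverse inclusion $\bigcup_{i \in I} \theta^{-1}(A_i) \subseteq X$ is immediate since each $\theta^{-1}(A_i) \subseteq X$. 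I would also remark that one can suppress the empty blocks, if any arise, though since each $A_i$ is non-empty and $\theta$ is onto, each $\theta^{-1}(A_i)$ is automatically non-empty.

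There is essentially no obstacle here — this is a short routine verification. The only mild subtlety is bookkeeping: because the paper writes composition from left to right, one should state carefully that $\theta^{-1}(A_i) = \{x \in X : (x)\theta \in A_i\}$, and that surjectivity and injectivity of $\theta$ are used respectively for the covering and the disjointness. I would present it as a two-line argument, emphasizing that the notation $\theta^{-1}(\sigma)$ introduced in \eqref{eqnrpt} is well-defined precisely because $\theta$ is a bijection, which is exactly what makes this lemma the partition-side analogue of the set-side action $A \mapsto \theta(A)$ used in Proposition \ref{prorps}.
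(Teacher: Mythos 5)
Your verification is correct and is exactly the routine argument the paper has in mind; indeed the paper states Lemma \ref{lemrp1} without any proof, treating it as immediate. One small correction to your closing remark: disjointness of the blocks $\theta^{-1}(A_i)$ needs no injectivity (preimages of disjoint sets are disjoint under any map) and covering needs no surjectivity (only that $\theta$ is defined on all of $X$); surjectivity is what guarantees each $\theta^{-1}(A_i)$ is non-empty, as you correctly observe earlier in the paragraph.
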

Now using $\theta$, we can get an isomorphism of the category of partitions $\Pi(X)$.
\begin{pro}\label{prorpis}
Let $\theta$ be a permutation on the set $X$. Define a functor $\theta \colon  \Pi(X) \to \Pi(X)$ as follows. For $\bar{\sigma} \in v\Pi(X)$, $ \theta(\bar{\sigma}) = {\theta}^\ast(\bar{\sigma}) = \overline{\theta^{-1}(\sigma)}$ and for $\eta^\ast\colon \bar{\sigma}_{1}\to \bar{\sigma}_{2}$ in $\Pi(X)$, $\theta(\eta^\ast) = (\theta \eta \theta^{-1})^\ast$. Then this is a normal category isomorphism\index{isomorphism!of normal categories} on $\Pi(X)$.
\end{pro}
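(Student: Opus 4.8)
The plan is to mimic the proof structure of Proposition \ref{prorps}, checking in turn that $\theta\colon \Pi(X)\to\Pi(X)$ as defined is (i) a well-defined covariant functor, (ii) a bijection on objects, (iii) fully faithful, and (iv) inclusion-preserving, which together give that it is a normal category isomorphism. First I would verify well-definedness: by Lemma \ref{lemrp1}, $\theta^{-1}(\sigma)$ is a genuine partition of $X$, and it is non-identity precisely when $\sigma$ is (since $\theta$ is a bijection, $\theta^{-1}(A_i)$ has more than one element iff $A_i$ does), so $\overline{\theta^{-1}(\sigma)}\in v\Pi(X)$. Next, for a morphism $\eta^\ast\colon \bar\sigma_1\to\bar\sigma_2$ coming from $\eta\colon \sigma_2\to\sigma_1$, I need $\theta\eta\theta^{-1}$ to be a well-defined map from $\theta^{-1}(\sigma_2)$ to $\theta^{-1}(\sigma_1)$: given a block $\theta^{-1}(B_j)\in\theta^{-1}(\sigma_2)$, apply $\theta$ to land on the block $B_j\in\sigma_2$, then $\eta$ to get a block $A_i\in\sigma_1$, then $\theta^{-1}$ to land on $\theta^{-1}(A_i)\in\theta^{-1}(\sigma_1)$; hence $(\theta\eta\theta^{-1})^\ast\colon \overline{\theta^{-1}(\sigma_1)}\to\overline{\theta^{-1}(\sigma_2)}$ is a legitimate morphism in $\Pi(X)$.

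For functoriality, the key computation is that if $\eta_1\colon \sigma_2\to\sigma_1$ and $\eta_2\colon \sigma_3\to\sigma_2$ give composable morphisms $\eta_1^\ast\colon \bar\sigma_1\to\bar\sigma_2$ and $\eta_2^\ast\colon \bar\sigma_2\to\bar\sigma_3$, then (recalling that the $\ast$-construction reverses direction, so composition of morphisms in $\Pi(X)$ corresponds to composition $\eta_2\eta_1$ of the underlying maps) one has $\theta(\eta_1^\ast)\theta(\eta_2^\ast) = (\theta\eta_1\theta^{-1})^\ast(\theta\eta_2\theta^{-1})^\ast = \big((\theta\eta_2\theta^{-1})(\theta\eta_1\theta^{-1})\big)^\ast = (\theta\eta_2\eta_1\theta^{-1})^\ast = \theta((\eta_2\eta_1)^\ast) = \theta(\eta_1^\ast\eta_2^\ast)$, using $\theta^{-1}\theta = 1_X$ in the middle; identities are preserved since $\theta 1\theta^{-1}=1$. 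Bijectivity on objects is immediate: $\bar\sigma\mapsto\overline{\theta^{-1}(\sigma)}$ has inverse $\bar\tau\mapsto\overline{\theta(\tau)}$, since $\theta(\theta^{-1}(\sigma))=\sigma$ and $\theta^{-1}(\theta(\tau))=\tau$. Fullness and faithfulness follow the same way as in Proposition \ref{prorps}: given $\mu^\ast\colon \overline{\theta^{-1}(\sigma_1)}\to\overline{\theta^{-1}(\sigma_2)}$ with $\mu\colon \theta^{-1}(\sigma_2)\to\theta^{-1}(\sigma_1)$, the map $\eta = \theta^{-1}\mu\theta\colon \sigma_2\to\sigma_1$ satisfies $\theta(\eta^\ast) = (\theta\theta^{-1}\mu\theta\theta^{-1})^\ast = \mu^\ast$, giving fullness; and $(\theta\eta\theta^{-1})^\ast = (\theta\eta'\theta^{-1})^\ast$ forces $\theta\eta\theta^{-1}=\theta\eta'\theta^{-1}$ hence $\eta=\eta'$ hence $\eta^\ast=(\eta')^\ast$, giving faithfulness.

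Finally, to conclude that $\theta$ is a \emph{normal category} isomorphism I would check it respects the chosen subobjects. Recall $\bar\sigma_1\le\bar\sigma_2$ iff $\sigma_2\le\sigma_1$ as partitions, i.e. every block of $\sigma_2$ sits inside a block of $\sigma_1$; since $\theta$ is a bijection, every block $\theta^{-1}(B_j)$ of $\theta^{-1}(\sigma_2)$ sits inside $\theta^{-1}(A_i)$ exactly when $B_j\subseteq A_i$, so $\bar\sigma_1\le\bar\sigma_2$ iff $\overline{\theta^{-1}(\sigma_1)}\le\overline{\theta^{-1}(\sigma_2)}$; thus $\theta$ is an order isomorphism on $v\Pi(X)$. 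Moreover $\theta$ carries the inclusion morphism $\nu^\ast = j(\bar\sigma_1,\bar\sigma_2)$ (where $\nu\colon \sigma_2\to\sigma_1$ is the block-inclusion map $B_j\mapsto A_i$) to $(\theta\nu\theta^{-1})^\ast$, and $\theta\nu\theta^{-1}$ is precisely the block-inclusion map $\theta^{-1}(B_j)\mapsto\theta^{-1}(A_i)$, i.e. the inclusion $\nu'\colon \theta^{-1}(\sigma_2)\to\theta^{-1}(\sigma_1)$; hence $\theta(j(\bar\sigma_1,\bar\sigma_2)) = j(\overline{\theta^{-1}(\sigma_1)},\overline{\theta^{-1}(\sigma_2)})$. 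A fully faithful, inclusion-preserving functor that is an order isomorphism on objects automatically restricts to an isomorphism of each principal ideal onto its image, so it is in particular a normal category isomorphism. The only mildly delicate point — and the one I would write out most carefully — is keeping track of the direction reversal introduced by the $(-)^\ast$ operation, so that the functoriality computation lands on $\eta_1^\ast\eta_2^\ast$ and not its reverse; everything else is a routine transport-of-structure along the bijection $\theta$.
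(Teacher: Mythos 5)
Your proof is correct and takes essentially the same route as the paper's: well-definedness via Lemma \ref{lemrp1}, bijectivity on objects, then functoriality, fullness and faithfulness by conjugation cancellation (the paper writes this as conjugating morphisms by the isomorphism $\theta^*$, i.e.\ $\theta(\eta^\ast)=(\theta^{-1})^*\eta^\ast\theta^*$, while you carry out the identical computation on the underlying maps $\eta$, with the correct handling of the direction reversal of $(-)^\ast$), finishing with the same order- and inclusion-preservation check.
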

\begin{proof}
Since $\theta$ is a mapping from $\theta ^{-1}(\sigma)$ to $\sigma$, $\theta^\ast $ is a morphism in $\Pi(X)$ from $\overline{\sigma} $ to $ \overline{\theta^{-1}(\sigma)}$ such that $\theta^\ast\colon  \alpha \mapsto \theta\alpha$ for all $\alpha \in \bar{\sigma}$.
Since $\theta$ is bijective, $\theta$ (as a functor) is a bijection on $v\Pi(X)$ such that $\overline{\sigma} \mapsto \overline{\theta^{-1}(\sigma)}$. Now $\theta$ being a bijection can be inverted and hence given $\eta^\ast\colon \bar{\sigma}_{1}\to \bar{\sigma}_{2}$ in $\Pi(X)$, we see that $(\theta^{-1})^* \eta^\ast \theta^* = (\theta \eta \theta^{-1})^\ast$ gives a map from $\theta^*(\bar{\sigma}_{1})$ to $\theta^*(\bar{\sigma}_{2})$ with the corresponding mapping $\theta(\eta) = \theta\eta \theta^{-1}$ from the partition $\theta^{-1}(\sigma_2)$ to $\theta^{-1}(\sigma_1)$.

If $\eta^\ast\colon \bar{\sigma}_{1} \to \bar{\sigma}_{2}$ and $\mu^\ast\colon \bar{\sigma}_{2} \to \bar{\sigma}_{3}$, then $ \theta(\eta^\ast\circ \mu^\ast) = (\theta^{-1})^* (\eta^\ast\circ \mu^\ast) \theta^* = (\theta^{-1})^* \eta^\ast (\theta^* (\theta^{-1})^*) \mu^\ast \theta^* = ((\theta^{-1})^* \eta^\ast \theta^*)((\theta^{-1})^* \mu^\ast \theta^*) = \theta(\eta^\ast)\circ\theta(\mu^\ast)$. Hence $\theta$ is a covariant functor on $\Pi(X)$

Given $\mu^\ast \colon  \theta(\bar{\sigma}_{1}) \to \theta(\bar{\sigma}_{2})$, then $\theta^* \mu^\ast (\theta^{-1})^* $ is a map from $\bar{\sigma}_{1} \to \bar{\sigma}_{2}$; and hence $\theta$ is full. If $(\theta^{-1})^* \eta^\ast \theta^* = (\theta^{-1})^* \mu^\ast \theta^*$, then $\eta^\ast=\mu^\ast$ and so $\theta$ is faithful.

Now $\bar{\sigma}_{1}\subseteq \bar{\sigma}_{2}$ if and only if ${\sigma_1} \supseteq {\sigma_2}$ if and only if $\theta^{-1}(\sigma_1) \supseteq \theta^{-1}(\sigma_2)$ if and only if $\theta^*(\bar{\sigma}_{1}) \subseteq \theta^*(\bar{\sigma}_{2})$. Also $\theta(j(\bar{\sigma}_{1},\bar{\sigma}_{2})) = j(\theta^*(\bar{\sigma}_{1}),\theta^*(\bar{\sigma}_{2}))$. Thus $\theta$ is a normal category isomorphism on $\Pi(X)$. 
\end{proof}
\begin{lem}\label{crossrp}
Let $\theta$ be a permutation of $X$. Then $(\Pi(X),\mathscr{P}(X); \Gamma_\theta)$ is a cross-connection\index{cross-connection} where $\Gamma_\theta \colon  \Pi(X) \to \Pi(X) \text{ is defined by }$ 
\begin{equation}\label{eqncrossrp}
\Gamma_\theta (\bar{\pi}) \:=\: \theta(\bar{\pi}) \text{ and } \Gamma_\theta(\eta^\ast) = \theta(\eta^\ast)
\end{equation}
\end{lem}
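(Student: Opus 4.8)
The plan is to verify the two conditions of Lemma~\ref{lemcxn}: that $\Gamma_\theta$ is a local isomorphism $\Pi(X)\to\Pi(X)$, and that every proper non-empty subset $A$ of $X$ arises as a cross-section of $\Gamma_\theta(\bar\pi)$ for some $\bar\pi\in v\Pi(X)$.

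The first condition comes essentially for free from Proposition~\ref{prorpis}. That proposition already establishes that the functor $\theta\colon\Pi(X)\to\Pi(X)$, $\bar\sigma\mapsto\overline{\theta^{-1}(\sigma)}$, $\eta^\ast\mapsto(\theta\eta\theta^{-1})^\ast$, is a normal category isomorphism, so in particular it is inclusion preserving and fully faithful. Since it is a bijection on objects whose inverse also preserves the inclusion preorder (both facts are recorded in the proof of Proposition~\ref{prorpis}), it carries the objects of the principal ideal $\langle\bar\pi\rangle$ bijectively onto those of $\langle\theta(\bar\pi)\rangle$; being fully faithful, the restriction $\theta_{|\langle\bar\pi\rangle}$ is then an isomorphism of $\langle\bar\pi\rangle$ onto $\langle\Gamma_\theta(\bar\pi)\rangle$. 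Thus $\Gamma_\theta=\theta$ meets Definition~\ref{lociso}. (If one prefers a functor literally into $N^\ast\mathscr{P}(X)$, compose with the isomorphism $Q$ of Theorem~\ref{thmP}; this changes nothing, as explained in the remark preceding Lemma~\ref{lemcxn}.)

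For the second condition, fix a proper non-empty $A\subseteq X$ and set $B=\theta(A)$, which is again proper and non-empty since $\theta$ is a bijection. I would first exhibit a non-identity partition $\pi$ of $X$ having $B$ as a cross-section: choose $b_0\in B$ and let $\pi$ consist of the block $\{b_0\}\cup(X\setminus B)$ together with the singletons $\{b\}$ for $b\in B\setminus\{b_0\}$. This is a partition of $X$; it is non-identity because $X\setminus B\neq\emptyset$ makes the distinguished block contain at least two elements; and $B$ meets each block of $\pi$ in exactly one point, so $B$ is a cross-section of $\pi$. Now since $\theta$ is a bijection one has $\theta(A)\cap R=\theta\bigl(A\cap\theta^{-1}(R)\bigr)$ for every $R\subseteq X$, and $\theta$ preserves cardinalities; hence $B=\theta(A)$ being a cross-section of $\pi=\{R_i\}$ is equivalent to $A$ being a cross-section of $\theta^{-1}(\pi)=\{\theta^{-1}(R_i)\}$. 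By the definition in~\eqref{eqnrpt} and Proposition~\ref{prorpis}, $\Gamma_\theta(\bar\pi)=\theta(\bar\pi)=\overline{\theta^{-1}(\pi)}$, so this says precisely that $A$ is a cross-section of $\Gamma_\theta(\bar\pi)$. Lemma~\ref{lemcxn} then yields that $(\Pi(X),\mathscr{P}(X);\Gamma_\theta)$ is a cross-connection.

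I expect no serious obstacle: the only step needing a little care is the bookkeeping in the last paragraph, namely reading the abstract requirement ``$A\in M\Gamma_\theta(\bar\pi)$'' through the identification $M(\pi)=\{A:A\text{ is a cross-section of }\pi\}$ coming from Lemma~\ref{lemm}, together with the explicit formula $\Gamma_\theta(\bar\pi)=\overline{\theta^{-1}(\pi)}$, and then observing that a permutation of $X$ transports cross-sections to cross-sections. Everything else is a direct consequence of Proposition~\ref{prorpis}.
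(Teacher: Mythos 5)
Your proposal is correct and takes essentially the same route as the paper's proof: Proposition~\ref{prorpis} supplies the local isomorphism property of $\Gamma_\theta$, and the fact that $\Gamma_\theta$ is surjective on objects settles the cross-section condition of Lemma~\ref{lemcxn}. The only difference is that you spell out explicitly the construction of a non-identity partition with a prescribed cross-section and the transport of cross-sections under $\theta$, details the paper compresses into the remark that ``the image of $\Gamma_\theta$ is full.''
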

\begin{proof}
By Proposition \ref{prorpis}, $\Gamma_\theta$ is a normal category isomorphism of $\Pi(X)$, $\Gamma_\theta$ is inclusion preserving, fully faithful and for each $\bar{\pi} \in v\Pi(X)$, $F_{|\langle \bar{\pi} \rangle}$ is an isomorphism of the ideal $\langle \bar{\pi} \rangle$ onto $\langle \Gamma_\theta(\bar{\pi}) \rangle$. Also since the image of $\Gamma_\theta$ is full, for every $A \in v\mathscr{P}(X)$, there is some $\bar{\pi} \in v\Pi(X)$ such that $A \in M{\theta(\bar{\pi})}$. Hence from Lemma \ref{lemcxn}, $\Gamma_\theta$ is a cross-connection.
\end{proof}
As above, it is straightforward to show the following.
\begin{lem}
Let $\theta$ be a permutation of $X$ and $\Delta_{\theta} \colon  \mathscr{P}(X) \to \mathscr{P}(X)$ be a functor such that 
\begin{equation}\label{eqndcrossrp}
\Delta_{\theta} (A) \:=\: \theta(A) \text{ and } \Delta_{\theta}(f) = \theta(f)
\end{equation}
Then $(\mathscr{P}(X), \Pi(X) ; \Delta_{\theta})$ is a cross-connection\index{cross-connection!dual}.
\end{lem}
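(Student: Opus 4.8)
The plan is to check directly that $\Delta_\theta$ satisfies Definition~\ref{cxn}, mirroring the proof of Lemma~\ref{crossrp} with the two categories interchanged. Since $N^\ast\Pi(X)$ is isomorphic to $\mathscr{P}(X)$ by Theorem~\ref{thmP2}, the local isomorphism of a cross-connection from $\mathscr{P}(X)$ to $\Pi(X)$ may be regarded as a functor $\mathscr{P}(X)\to\mathscr{P}(X)$, and, dually to Lemma~\ref{lemcxn}, $(\mathscr{P}(X),\Pi(X);\Delta_\theta)$ is a cross-connection precisely when (i) $\Delta_\theta$ is a local isomorphism and (ii) for every $\bar\pi\in v\Pi(X)$ there is some $A\in v\mathscr{P}(X)$ with $\bar\pi\in M\Delta_\theta(A)$. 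So the proof splits into these two verifications, together with the preliminary remark that $\Delta_\theta$ is indeed a functor, which is immediate since $\Delta_\theta$ coincides with the functor $\theta_{\mathscr{P}}$ of Proposition~\ref{prorps}.

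For (i): by Proposition~\ref{prorps}, $\Delta_\theta=\theta_{\mathscr{P}}$ is a normal category isomorphism of $\mathscr{P}(X)$. In particular it is inclusion preserving and fully faithful; and being a bijection on both objects and morphisms, it restricts for each $A\in v\mathscr{P}(X)$ to an isomorphism of the principal ideal $\langle A\rangle$ onto $\langle\theta(A)\rangle=\langle\Delta_\theta(A)\rangle$. Hence $\Delta_\theta$ is a local isomorphism.

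For (ii): first one translates the condition through the isomorphism $R\colon N^\ast\Pi(X)\to\mathscr{P}(X)$ of Theorem~\ref{thmP2}, which sends an $H$-functor $H(e;-)$ to $\text{Im } e$; dually to Lemma~\ref{lemm} this gives $M\Delta_\theta(A)=\{\,\bar\pi\in v\Pi(X) : \theta(A)\text{ is a cross-section of }\pi\,\}$. Now let $\bar\pi\in v\Pi(X)$, so $\pi$ is a non-identity partition of $X$, and pick a cross-section $A'$ of $\pi$; since $\pi$ is non-identity, $A'$ is a non-empty proper subset of $X$, hence an object of $\mathscr{P}(X)$. Put $A=\theta^{-1}(A')$; as $\theta$ is a bijection of $X$, $A$ is again a non-empty proper subset of $X$, so $A\in v\mathscr{P}(X)$, and $\Delta_\theta(A)=\theta(A)=\theta(\theta^{-1}(A'))=A'$ is a cross-section of $\pi$. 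Thus $\bar\pi\in M\Delta_\theta(A)$, which establishes (ii); combining with (i), $(\mathscr{P}(X),\Pi(X);\Delta_\theta)$ is a cross-connection.

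The step that deserves the most attention is the identification of the $M$-set on the partition side used in (ii): one has to unwind, through the category isomorphism of Theorem~\ref{thmP2}, what it means for $\bar\pi$ to lie in $M\Delta_\theta(A)$, arriving at the condition that $\theta(A)$ be a cross-section of $\pi$ — the exact dual of Lemma~\ref{lemm}. Everything else is routine: condition (i) is a direct consequence of Proposition~\ref{prorps}, and once the $M$-set is correctly described the covering condition (ii) follows at once from the surjectivity of $\theta$ by pulling a cross-section of $\pi$ back along $\theta^{-1}$. So no serious obstacle is anticipated; the lemma is, as the paper says, straightforward once the dual of Lemma~\ref{lemm} is recorded.
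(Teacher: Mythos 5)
Your proof is correct and follows exactly the route the paper intends: the paper omits the argument with the remark ``As above, it is straightforward,'' meaning one mirrors the proof of Lemma~\ref{crossrp} with the categories interchanged, which is precisely what you do (Proposition~\ref{prorps} giving the local isomorphism, and the dual of Lemma~\ref{lemm} giving the covering condition via a cross-section pulled back along $\theta^{-1}$). Your explicit identification of $M\Delta_\theta(A)$ and the observation that a cross-section of a non-identity partition is a proper subset are correct and fill in the details the paper leaves to the reader.
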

\begin{pro}\label{prorcross}
Let $(\Pi(X),\mathscr{P}(X); \Gamma_\theta)$ and $(\mathscr{P}(X), \Pi(X); \Delta_{\theta})$ be two cross-connections with associated bifunctors $\Gamma_{\theta }(-,-)$ and $\Delta_{\theta }(-,-)$ respectively, then there is a natural isomorphism $\chi_{\Gamma_\theta} \colon  \Gamma_{\theta }(-,-) \to \Delta_{\theta }(-,-)$ defined as 
$$\chi_{\Gamma_\theta}(A,\bar{\pi})\colon  a \mapsto \theta^{-1} a\theta.$$ 
Hence $\Delta_{\theta }$ is the dual of ${\Gamma_\theta}$ and $\chi_{\Gamma_\theta}$ is the duality associated with the cross-connection.
\end{pro}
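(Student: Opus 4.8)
The plan is to check that the prescribed family $\chi_{\Gamma_\theta}(A,\bar{\pi})\colon a\mapsto \theta^{-1}a\theta$ really is a natural isomorphism $\Gamma_\theta(-,-)\to\Delta_\theta(-,-)$ of bifunctors $\mathscr{P}(X)\times\Pi(X)\to\mathbf{Set}$, and then to read off the rest from the general theory. Concretely, three things must be verified: (i) for each $(A,\bar{\pi})\in v\mathscr{P}(X)\times v\Pi(X)$ the map $\chi_{\Gamma_\theta}(A,\bar{\pi})$ is a well-defined bijection of $\Gamma_\theta(A,\bar{\pi})$ onto $\Delta_\theta(A,\bar{\pi})$ (as sets of transformations, via the descriptions (\ref{eqngx1}) and (\ref{eqndx1})); (ii) the family $\{\chi_{\Gamma_\theta}(A,\bar{\pi})\}$ is natural in $(A,\bar{\pi})$; (iii) once (i) and (ii) hold, the bifunctor description of cross-connections recalled in Section \ref{sccxn} together with the uniqueness in Remark \ref{rmkdual} forces $\Delta_\theta$ to be \emph{the} dual of $\Gamma_\theta$ and $\chi_{\Gamma_\theta}$ the associated duality. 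The tool underlying all of (i)--(ii) is that conjugation $c_\theta\colon x\mapsto\theta^{-1}x\theta$ is an automorphism of $\mathscr{T}_X$ that restricts to $Sing(X)$, with inverse $x\mapsto\theta x\theta^{-1}$, and that $c_\theta$ is precisely the transformation-level implementation of the functors $\theta_{\mathscr{P}}$ (Proposition \ref{prorps}) and $\theta$ on $\Pi(X)$ (Proposition \ref{prorpis}).

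For well-definedness, take $a\in\Gamma_\theta(A,\bar{\pi})$, so $\mathrm{Im}\,a\subseteq A$ and $\bar{\pi}_a\subseteq\Gamma_\theta(\bar{\pi})=\overline{\theta^{-1}(\pi)}$. Writing maps on the right,
\[
\mathrm{Im}(\theta^{-1}a\theta)=\theta(\mathrm{Im}\,a)\subseteq\theta(A)=\Delta_\theta(A),
\]
and, since $\theta$ is injective, $(x,y)\in\pi_{\theta^{-1}a\theta}$ iff $((x)\theta^{-1},(y)\theta^{-1})\in\pi_a$. By Lemma \ref{lemrp1} and the order on $\Pi(X)$, the inclusion $\bar{\pi}_a\subseteq\overline{\theta^{-1}(\pi)}$ says exactly that $((x)\theta,(y)\theta)\in\pi$ implies $(x,y)\in\pi_a$ for all $x,y$; re-indexing $x,y$ by $(x)\theta^{-1},(y)\theta^{-1}$ (legitimate because $\theta$ is a bijection) rewrites this as $(x,y)\in\pi\Rightarrow(x,y)\in\pi_{\theta^{-1}a\theta}$, i.e. $\bar{\pi}_{\theta^{-1}a\theta}\subseteq\bar{\pi}$. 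Hence $\theta^{-1}a\theta\in\Delta_\theta(A,\bar{\pi})$. The identical computation with $\theta$ and $\theta^{-1}$ swapped shows $b\mapsto\theta b\theta^{-1}$ carries $\Delta_\theta(A,\bar{\pi})$ into $\Gamma_\theta(A,\bar{\pi})$, and the two maps are mutually inverse, so $\chi_{\Gamma_\theta}(A,\bar{\pi})$ is a bijection.

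For naturality, fix a morphism $(f,v^\ast)\colon(A,\bar{\pi})\to(B,\bar{\pi}')$ in $\mathscr{P}(X)\times\Pi(X)$. By (\ref{eqngx2}), $\Gamma_\theta(f,v^\ast)$ sends $a\mapsto u(af)$ with $u^\ast=\Gamma_\theta(v^\ast)=\theta(v^\ast)$, and by (\ref{eqndx2}), $\Delta_\theta(f,v^\ast)$ sends $a\mapsto v(ag)$ with $g=\Delta_\theta(f)=\theta(f)=\theta^{-1}f\theta$. Chasing $a\in\Gamma_\theta(A,\bar{\pi})$ around the naturality square one way gives
\[
a\;\longmapsto\;u(af)\;\longmapsto\;\theta^{-1}\bigl(u(af)\bigr)\theta=(\theta^{-1}u\theta)(\theta^{-1}a\theta)(\theta^{-1}f\theta),
\]
and the other way gives $a\mapsto\theta^{-1}a\theta\mapsto v(\theta^{-1}a\theta)g$. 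As $g=\theta^{-1}f\theta$ by definition, the square commutes provided $\theta^{-1}u\theta=v$; that is, one must check that conjugating by $\theta$ the partial left translation $u$ encoding $\Gamma_\theta(v^\ast)=\theta(v^\ast)$ (via Lemma \ref{lemH2}) returns the partial left translation $v$ encoding $v^\ast$ on the $\Delta_\theta$-side. This is precisely the statement that the functor $\theta$ on $\Pi(X)$ (equivalently on $N^\ast\mathscr{P}(X)$) is realized by $c_\theta$, together with functoriality of $c_\theta$.

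The step I expect to be the real obstacle is this last matching of partial-translation data: tracking $v^\ast$ through the isomorphism $N^\ast\mathscr{P}(X)\cong\Pi(X)$, applying $\theta$, and then back via Lemma \ref{lemH2}, while separately tracking $v^\ast$ through the dual set-up for the $\Delta_\theta$-bifunctor (using the analogue of Lemma \ref{lemH2} for $N^\ast\Pi(X)$), and confirming that the two transformations differ exactly by conjugation by $\theta$. Everything else — the image inclusions and the kernel re-indexing above, and the verification of bijectivity — is a direct computation once the left-to-right composition convention (with $\theta$ written on the right as a functor symbol) is kept consistently in view. With (i) and (ii) in hand, the final assertion is immediate: having exhibited a natural isomorphism $\chi_{\Gamma_\theta}$ between $\Gamma_\theta(-,-)$ and the bifunctor $\Delta_\theta(-,-)$ of a cross-connection $\Delta_\theta\colon\mathscr{P}(X)\to\mathscr{P}(X)$ is exactly the defining relationship between a cross-connection and its dual from Section \ref{sccxn}, so by the uniqueness in Remark \ref{rmkdual}, $\Delta_\theta$ is the dual of $\Gamma_\theta$ and $\chi_{\Gamma_\theta}$ is the duality.
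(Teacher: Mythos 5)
Your proposal is correct and follows essentially the same route as the paper: a direct check that conjugation $a\mapsto\theta^{-1}a\theta$ maps $\Gamma_\theta(A,\bar{\pi})$ bijectively onto $\Delta_\theta(A,\bar{\pi})$ (image and kernel conditions), a chase of the naturality square using the formulas (\ref{eqngx2}) and (\ref{eqndx2}), and an appeal to the general theory for the duality conclusion. The one step you flag as the likely obstacle, namely $\theta^{-1}u\theta=v$, is in fact immediate rather than delicate: by Proposition \ref{prorpis} the functor acts as $\Gamma_\theta(v^\ast)=\theta(v^\ast)=(\theta v\theta^{-1})^\ast$, so $u=\theta v\theta^{-1}$ by definition and the square closes exactly as in the paper's computation, where both routes reduce to $\eta\theta^{-1}af\theta$.
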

\begin{proof}
First, let $(f,\eta^\ast) \in \mathscr{P}(X) \times \Pi(X)$ where $f \colon  A \to B$ and $\eta^\ast \colon  \bar{\pi}_1 \to \bar{\pi}_2$. 
\begin{equation*}\label{duality}
\xymatrixcolsep{4pc}\xymatrixrowsep{3pc}\xymatrix
{
 \Gamma_{\theta }(A,\bar{\pi}_1) \ar[r]^{\chi_{\Gamma_\theta}(A,\bar{\pi}_1)} \ar[d]_{\Gamma_{\theta }(f,\eta^\ast)}  
 & \Delta_{\theta }(A,\bar{\pi}_1) \ar[d]^{\Delta_{\theta }(f,\eta^\ast)} \\       
 \Gamma_{\theta }(B,\bar{\pi}_2) \ar[r]^{\chi_{\Gamma_\theta}(B,\bar{\pi}_2)} & \Delta_{\theta }(B,\bar{\pi}_2) 
}
\end{equation*}
To see that the above diagram commutes, let $ a \in \Gamma_{\theta }(A,\bar{\pi}_1)$. Then,
\begin{align*}
( a)\chi_{\Gamma_\theta}(A,\bar{\pi}_1)\Delta_{\theta }(f,\eta^\ast)& = (\theta^{-1}  a \theta ) \Delta_{\theta }(f,\eta^\ast)\\
& =  \eta (\theta^{-1}  a \theta) \theta(f) \text{ (using (\ref{eqndx2}))}\\
& = \eta \theta^{-1}  a \theta \theta^{-1} f \theta  \text{ (using Proposition \ref{prorps})}\\
& = \eta \theta^{-1}  a f \theta. 
\end{align*}
\begin{align*}
\text{Also } ( a)\Gamma_{\theta }(f,\eta^\ast)\chi_{\Gamma_\theta}(B,\bar{\pi}_2) &= (\theta(\eta) a f) \chi_{\Gamma_\theta}(B,\bar{\pi}_2) \text{ (using (\ref{eqngx2}))}\\
 &= \theta^{-1}(\theta(\eta) a f)\theta \\
 &= \theta^{-1}\theta\eta\theta^{-1} a f\theta \text{ (using Proposition \ref{prorpis})}\\
 &= \eta\theta^{-1} a f\theta.
\end{align*}
Thus we have 
$$( a)\chi_{\Gamma_\theta}(A,\bar{\pi}_1)\Delta_{\theta }(f,\eta^\ast) = ( a)\Gamma_{\theta }(f,\eta^\ast)\chi_{\Gamma_\theta}(B,\bar{\pi}_2) \:\: \text{ for every } \:  a \in \Gamma_{\theta }(A,\bar{\pi}_1).$$ 
Now for $ a \in \Gamma_{\theta }(A,\bar{\pi})$, the map $\theta^{-1}  a \theta \in \Delta_{\theta }(A,\bar{\pi})$. To see that,
observe that Im $ a  \subseteq A$, Im $\theta^{-1}  a \subseteq A$ imply Im $\theta^{-1}  a \theta \subseteq \theta(A) $.

Similarly since $\bar{\pi}_{ a} \subseteq \theta(\bar{\pi})$ and $\theta(\bar{\pi}) ={\theta}^\ast(\bar{\pi})$, we have $\theta(\pi) \subseteq \pi_ a$. Since $\pi_ a \subseteq \pi_{ a\theta}$, we have $\theta(\pi) \subseteq \pi_{ a\theta}$, i.e., $\theta^{-1}\theta(\pi) \subseteq \theta^{-1}(\pi_{ a\theta})$. Now since $\theta^{-1}\theta(\pi) = \pi$ and $\theta^{-1}(\pi_{ a\theta}) = \pi_{\theta^{-1} a\theta}$, we have $\pi \subseteq \pi_{\theta^{-1} a\theta}$. Hence $\overline{\pi_{\theta^{-1} a\theta}} \subseteq \overline{\pi}$. Thus the map $\theta^{-1}  a \theta \in \Delta_{\theta }(A,\bar{\pi})$ whenever $ a \in \Gamma_{\theta }(A,\bar{\pi})$.

Also observe that since the conjugation map $ a \mapsto \theta^{-1} a\theta$ is a bijection, the map $\chi_{\Gamma_\theta}(A,\bar{\pi})$ is a bijection of the set $\Gamma_{\theta }(A,\bar{\pi})$ onto the set $\Delta_{\theta }(A,\bar{\pi})$. Thus $\chi_{\Gamma_\theta}$ is a natural isomorphism. 
\end{proof}
The natural isomorphism $\chi_{\Gamma_\theta}$ is the duality associated with $\Gamma_\theta$ and by \cite{cross}, $\Delta_{\theta }$ is the dual cross-connection of ${\Gamma_\theta}$.

Given a cross-connection $(\Pi(X),\mathscr{P}(X); \Gamma_\theta)$ with a dual cross-connection $(\mathscr{P}(X), \Pi(X) ; \Delta_{\theta})$, from the general theory, there exists two regular subsemigroups $U\Gamma_\theta$ and $U\Delta_{\theta}$ of $T\mathscr{P}(X)$ and $T\Pi(X)$ respectively. We will identify the cross-connection semigroup as a sub-direct product of these intermediary subsemigroups. $U\Gamma_\theta$ and $U\Delta_{\theta}$ are described in the following lemma.
\begin{lem}\label{lemrtugamma}
$$U\Gamma_\theta \:=\: \bigcup\{  a \in Sing(X) :\text{ Im }  a \subseteq A \text{ and } \bar{\pi}_{ a} \subseteq \theta(\bar{\pi}) \: : \: (A,\bar{\pi}) \in v\mathscr{P}(X) \times v\Pi(X)\}$$
$$U\Delta_{\theta}   \:=\: \bigcup\{  b \in Sing(X) :\text{ Im }  b \subseteq \theta(A) \text{ and } \bar{\pi}_{ b} \subseteq \bar{\pi} \: : \: (A,\bar{\pi}) \in v\mathscr{P}(X) \times v\Pi(X)\}$$
\end{lem}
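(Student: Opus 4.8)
The plan is to unwind the definitions of $U\Gamma_\theta$ and $U\Delta_\theta$ from the general theory, namely
$$U\Gamma_\theta = \bigcup\{\Gamma_\theta(A,\bar\pi) : (A,\bar\pi) \in v\mathscr{P}(X)\times v\Pi(X)\},$$
and substitute the explicit description of the bifunctor $\Gamma_\theta(-,-)$ given in (\ref{eqngx1}) together with the definition $\Gamma_\theta(\bar\pi) = \theta(\bar\pi)$ from (\ref{eqncrossrp}). Indeed, $\Gamma_\theta(A,\bar\pi) = \{a \in Sing(X) : \text{Im } a \subseteq A \text{ and } \bar\pi_a \subseteq \Gamma_\theta(\bar\pi)\}$, and since $\Gamma_\theta(\bar\pi) = \theta(\bar\pi)$, this is exactly $\{a \in Sing(X) : \text{Im } a \subseteq A \text{ and } \bar\pi_a \subseteq \theta(\bar\pi)\}$. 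Taking the union over all pairs $(A,\bar\pi) \in v\mathscr{P}(X)\times v\Pi(X)$ yields the first displayed formula.

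For the second formula I would proceed identically, using the dual bifunctor $\Delta_\theta(-,-)$ from (\ref{eqndx1}) and the fact that $\Delta_\theta(A) = \theta(A)$ from (\ref{eqndcrossrp}). Thus $\Delta_\theta(A,\bar\pi) = \{b \in Sing(X) : \text{Im } b \subseteq \Delta_\theta(A) \text{ and } \bar\pi_b \subseteq \bar\pi\} = \{b \in Sing(X) : \text{Im } b \subseteq \theta(A) \text{ and } \bar\pi_b \subseteq \bar\pi\}$, and again one takes the union over all pairs. The only thing to check beyond this substitution is that $U\Gamma_\theta$ and $U\Delta_\theta$ as defined in the general theory (Section \ref{sccxn}) really are, via Theorems \ref{thmpx} and \ref{thmP2} and the identifications made there, subsets of $Sing(X)$ — i.e., that the normal cones in $\Gamma_\theta(A,\bar\pi) \subseteq T\mathscr{P}(X)$ correspond to transformations under the isomorphism $\phi\colon Sing(X) \to T\mathscr{P}(X)$, and similarly for $T\Pi(X)$ via its anti-isomorphism to $Sing(X)$. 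This is precisely the translation already carried out in writing the bifunctors (\ref{eqngx1})–(\ref{eqngx2}) and (\ref{eqndx1})–(\ref{eqndx2}) with elements $a,b \in Sing(X)$, so no new work is needed.

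Since the proof is a direct unwrapping of the relevant definitions with the substitutions $\Gamma_\theta(\bar\pi) = \theta(\bar\pi)$ and $\Delta_\theta(A) = \theta(A)$, I do not expect any genuine obstacle; the statement is essentially a corollary of Lemma \ref{crossrp}, Proposition \ref{prorcross}, and the explicit bifunctor formulas. The one point requiring a word of care is the bookkeeping between the abstract objects $\Gamma(c,d)$ (sets of normal cones) and the concrete sets of transformations, but that identification is exactly what the formulas in the preceding subsection set up, so the write-up can simply cite those. Accordingly the proof will read as: apply the definitions of $U\Gamma$ and $U\Delta$ to the cross-connection of Lemma \ref{crossrp} and its dual, then substitute (\ref{eqngx1}) and (\ref{eqndx1}) together with (\ref{eqncrossrp}) and (\ref{eqndcrossrp}).
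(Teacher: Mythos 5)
Your proposal is correct and coincides with how the paper treats this lemma: the paper states it without a separate proof, precisely because it is the direct unwinding of the general definition $U\Gamma = \bigcup\{\Gamma(c,d)\}$ (and dually $U\Delta$) applied to the bifunctor formulas (\ref{eqngx1}) and (\ref{eqndx1}) with the substitutions $\Gamma_\theta(\bar\pi)=\theta(\bar\pi)$ and $\Delta_\theta(A)=\theta(A)$, exactly as you do. Your remark that the identification of normal cones with transformations (via Theorem \ref{thmpx} and the anti-isomorphism $T\Pi(X)\cong Sing(X)^{\text{op}}$) is already built into those bifunctor formulas is also the same bookkeeping the paper relies on.
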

Observe here that since $(A,\bar{\pi}) \in v\mathscr{P}(X) \times v\Pi(X)$, $U\Gamma_\theta \cong Sing(X)$ and $U\Delta_{\theta} \cong Sing(X)^\text{op}$. The following lemma which describes the linked cones can be easily verified. 
\begin{lem}\label{lemrtlinked}
Let $(\Pi(X),\mathscr{P}(X); \Gamma_\theta)$ be a cross-connection such that the duality is given by $\chi_{\Gamma_\theta}(A,\bar{\pi}) \colon   a \mapsto \theta^{-1} a\theta$; then $ a$ is linked to $ b$ if and only if $  b = \theta^{-1} a\theta$.
\end{lem}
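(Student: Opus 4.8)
The plan is to unwind the definition of \emph{linked} from Section \ref{sccxn} in the present concrete setting. Recall that $a \in U\Gamma_\theta$ is linked to $b \in U\Delta_\theta$ precisely when there is a pair $(A,\bar\pi) \in v\mathscr{P}(X) \times v\Pi(X)$ with $a \in \Gamma_\theta(A,\bar\pi)$ and $b = \chi_{\Gamma_\theta}(A,\bar\pi)(a)$. By Proposition \ref{prorcross}, the duality $\chi_{\Gamma_\theta}$ is given componentwise by the conjugation map $\chi_{\Gamma_\theta}(A,\bar\pi)\colon a \mapsto \theta^{-1}a\theta$, which does not depend on the chosen pair $(A,\bar\pi)$. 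So once we know that \emph{some} admissible pair exists, the value of $b$ is forced to be $\theta^{-1}a\theta$.

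First I would prove the forward direction: if $a$ is linked to $b$, then by definition there is a pair $(A,\bar\pi)$ with $a \in \Gamma_\theta(A,\bar\pi)$ and $b = \chi_{\Gamma_\theta}(A,\bar\pi)(a) = \theta^{-1}a\theta$, which is exactly the claimed equality. For the converse, suppose $b = \theta^{-1}a\theta$; I must exhibit an admissible pair $(A,\bar\pi)$. Since $a \in U\Gamma_\theta$, Lemma \ref{lemrtugamma} gives some $(A,\bar\pi) \in v\mathscr{P}(X) \times v\Pi(X)$ with $\operatorname{Im} a \subseteq A$ and $\bar\pi_a \subseteq \theta(\bar\pi)$, i.e.\ $a \in \Gamma_\theta(A,\bar\pi)$ by \eqref{eqngx1}. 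For this very pair, $\chi_{\Gamma_\theta}(A,\bar\pi)(a) = \theta^{-1}a\theta = b$, so $(a,b)$ is linked. (The fact that $\theta^{-1}a\theta$ indeed lies in $\Delta_\theta(A,\bar\pi)$ was already checked inside the proof of Proposition \ref{prorcross}, so no new work is needed there.)

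The only point requiring a little care — and the place I would expect to be the main obstacle — is making sure the argument does not secretly depend on the particular pair $(A,\bar\pi)$ chosen in Lemma \ref{lemrtugamma}. This is handled by the observation that $\chi_{\Gamma_\theta}(A,\bar\pi)$ equals conjugation by $\theta$ uniformly over all pairs $(A,\bar\pi)$, a consequence of Proposition \ref{prorcross}; hence linkedness of $(a,b)$ reduces to the single equation $b = \theta^{-1}a\theta$ regardless of which pair witnesses membership of $a$ in $U\Gamma_\theta$. With that remark in place the lemma is immediate, which is why the statement is described as easily verified; I would present the two directions briefly and invoke Proposition \ref{prorcross} and Lemma \ref{lemrtugamma} rather than redo any calculation.
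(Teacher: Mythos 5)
Your proof is correct and is exactly the routine verification the paper has in mind: the paper omits the argument (stating only that the lemma ``can be easily verified''), and your unwinding of the definition of linked cones via Proposition \ref{prorcross} and Lemma \ref{lemrtugamma}, including the observation that $\chi_{\Gamma_\theta}(A,\bar{\pi})$ is conjugation by $\theta$ uniformly in $(A,\bar{\pi})$, supplies precisely the intended details.
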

Having characterized the linked cones for the cross-connection, now we describe the cross-connection semigroup\index{cross-connection!semigroup}.
\begin{thm}\label{thmcrsrtx}
Given a cross-connection $(\Pi(X),\mathscr{P}(X); \Gamma_\theta)$, the cross-connection semigroup\index{isomorphism!of semigroups} $\tilde{S}\Gamma_\theta$ is isomorphic to $Sing(X)$.
\end{thm}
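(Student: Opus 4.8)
The plan is to exhibit an explicit isomorphism $\tilde{S}\Gamma_\theta \to Sing(X)$ built out of the identifications already established. By definition $\tilde{S}\Gamma_\theta = \{(\gamma,\delta) \in U\Gamma_\theta \times U\Delta_\theta : (\gamma,\delta) \text{ is linked}\}$, and by Lemma~\ref{lemrtlinked} the linked pairs are precisely those of the form $(a, \theta^{-1}a\theta)$ with $a \in U\Gamma_\theta$. Since $(A,\bar{\pi})$ ranges over all of $v\mathscr{P}(X) \times v\Pi(X)$, Lemma~\ref{lemrtugamma} gives $U\Gamma_\theta = Sing(X)$ (every $a \in Sing(X)$ lies in $\Gamma_\theta(A,\bar{\pi})$ for a suitable choice of $A \supseteq \text{Im }a$ and $\bar{\pi}$ with $\theta(\bar{\pi}) \supseteq \bar{\pi}_a$, using that $\Gamma_\theta$ is the surjective functor $\theta$). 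Thus the underlying set of $\tilde{S}\Gamma_\theta$ is in bijection with $Sing(X)$ via $\psi \colon (a,\theta^{-1}a\theta) \mapsto a$, and I would take this $\psi$ as the candidate isomorphism; its inverse sends $a$ to the linked pair $(a, \theta^{-1}a\theta)$.

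Next I would check that $\psi$ is a semigroup homomorphism. The binary operation on $\tilde{S}\Gamma_\theta$ is $(\gamma,\delta)\circ(\gamma',\delta') = (\gamma\cdot\gamma', \delta'\cdot\delta)$, where the first coordinate is the product in $T\mathscr{P}(X)$ and the second is the product in $T\Pi(X)$. Under the isomorphism $T\mathscr{P}(X) \cong Sing(X)$ of Theorem~\ref{thmpx}, the first-coordinate product $\gamma\cdot\gamma'$ corresponds to the product $aa'$ in $Sing(X)$ (the cone $\rho^a$ multiplies the way $a$ does). So I must verify that the linked pair assigned to $aa'$ is indeed the $\circ$-product of the pairs assigned to $a$ and $a'$: on the first coordinate this is the statement $\rho^a \cdot \rho^{a'} = \rho^{aa'}$, which is exactly the homomorphism $\phi$ of Theorem~\ref{thmpx}; on the second coordinate I need $\sigma^{\theta^{-1}a'\theta} \cdot \sigma^{\theta^{-1}a\theta} = \sigma^{\theta^{-1}(aa')\theta}$, which follows from the anti-isomorphism $T\Pi(X) \cong Sing(X)^{\text{op}}$ (so $\sigma^b \cdot \sigma^c = \sigma^{cb}$) together with $(\theta^{-1}a'\theta)(\theta^{-1}a\theta)$ reversed being $\theta^{-1}(aa')\theta$. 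Hence $\psi((a,\theta^{-1}a\theta)\circ(a',\theta^{-1}a'\theta)) = \psi((aa', \theta^{-1}(aa')\theta)) = aa' = \psi(a,\cdot)\psi(a',\cdot)$, and since $\psi$ is a bijection, it is an isomorphism.

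The main obstacle I anticipate is bookkeeping rather than any deep difficulty: one must be careful that the linked pair $(a,\theta^{-1}a\theta)$ is genuinely well-defined as an element of $\tilde{S}\Gamma_\theta$ for \emph{every} $a \in Sing(X)$ — i.e.\ that $a \in \Gamma_\theta(A,\bar{\pi})$ and $\theta^{-1}a\theta \in \Delta_\theta(A,\bar{\pi})$ for a common $(A,\bar{\pi})$ — and that the $\circ$-product of two linked pairs is again linked (so the product stays inside $\tilde{S}\Gamma_\theta$). The first point is handled by Lemma~\ref{lemrtugamma} and the compatibility of $\Gamma_\theta, \Delta_\theta$ via the duality $\chi_{\Gamma_\theta}$ of Proposition~\ref{prorcross}; the second is automatic because $\tilde{S}\Gamma_\theta$ is already known from the general theory to be a (regular) semigroup under $\circ$. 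The remaining care is simply tracking the op-twist in the second coordinate so that the anti-isomorphism $T\Pi(X) \cong Sing(X)^{\text{op}}$ cancels the reversal in the definition of $\circ$ and the conjugation $a \mapsto \theta^{-1}a\theta$ intertwines the two sides correctly.
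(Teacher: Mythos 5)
Your proposal is correct and follows essentially the same route as the paper: identify $\tilde{S}\Gamma_\theta$ via Lemma \ref{lemrtlinked} as the pairs $(a,\theta^{-1}a\theta)$, use the op-structure of $U\Delta_\theta$ (equivalently the anti-isomorphism $T\Pi(X)\cong Sing(X)^{\mathrm{op}}$) to compute that the second coordinate of a product is $\theta^{-1}aa'\theta$, and conclude that the evident bijection with $Sing(X)$ is an isomorphism. The only difference is that you map $\tilde{S}\Gamma_\theta \to Sing(X)$ while the paper maps $Sing(X)\to\tilde{S}\Gamma_\theta$, and you spell out the well-definedness of the linked pairs a bit more carefully, which is harmless.
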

\begin{proof}
Using Lemma \ref{lemrtlinked}, we see that 
$$ \tilde{S}\Gamma_\theta = \:\{\: ( a,\theta^{-1} a\theta) \: \text{ such that }  a \in Sing(X)\}$$
Since $U\Delta_{\theta}$ is a subsemigroup of $Sing(X)^{\text{op}}$, with operation $\theta^{-1} a'\theta\ast\theta^{-1} a\theta = \theta^{-1} a\theta\theta^{-1} a'\theta = \theta^{-1} a(\theta\theta^{-1}) a'\theta = \theta^{-1} a a'\theta$ in $Sing(X)^{\text{op}}$, so the binary operation in $\tilde{S}\Gamma_\theta$ is given by $$( a,\theta^{-1} a\theta) \circ ( a',\theta^{-1} a'\theta) = ( a a',\theta^{-1} a a'\theta)$$
Now define a mapping $\psi\colon  Sing(X) \to \tilde{S}\Gamma_\theta $ as $ a\mapsto ( a,\theta^{-1} a\theta)$. Clearly $\psi$ is well-defined and onto. Also $( a \cdot  a')\psi = ( a a')\psi = ( a a',\theta^{-1}  a a'\theta) = ( a a',\theta^{-1}  a\theta\theta^{-1} a'\theta) = ( a,\theta^{-1} a\theta) \circ ( a',\theta^{-1} a'\theta) = ( a)\psi \circ ( a')\psi$. Thus $\psi$ is a homomorphism. 
Now if $( a)\psi = ( a')\psi$, then $( a,\theta^{-1} a\theta) = ( a',\theta^{-1} a'\theta)$ and hence $ a = a'$. Thus $\psi$ is an isomorphism and hence $\tilde{S}\Gamma_\theta$ is isomorphic to $Sing(X)$.
\end{proof}
\begin{rmk}
Thus given a permutation $\theta$ on $X$, we can get a representation of $Sing(X)$ as the cross-connection semigroup $\tilde{S}\Gamma_\theta$ corresponding to the cross-connection $(\Pi(X),\mathscr{P}(X); \Gamma_\theta)$.

Let $Sing(X)^* = (Sing(X),\ast)$ with the binary composition $\ast$ defined as follows. For a permutation $\theta$,
$$  a \ast  b =  a \cdot \theta \cdot  b \quad \text{ for }  a,  b \in Sing(X).$$
Then we can see that the representation $\Gamma_\theta$ described above refers to the cross-connection arising from the semigroup $Sing(X)^*$.

If we define $\phi\colon  Sing(X)^* \to \tilde{S}\Gamma_\theta$ as $ a \mapsto (\theta a, a\theta)$, then 
$$( a \ast  a')\phi = ( a\theta a')\phi = (\theta a\theta a', a\theta a'\theta ) = (\theta a,  a\theta)\circ(\theta  a', a'\theta) = ( a)\phi \circ ( a')\phi.$$ 
Further it can be seen that $\phi$ is an isomorphism. 
\end{rmk}

Now we proceed to show that in fact these are all the cross-connections from ${\Pi(X)}$ to ${\mathscr{P}(X)}$. 
\begin{thm}\label{thmmain}
Any cross-connection $\Gamma$ from ${\Pi(X)}$ to ${\mathscr{P}(X)}$ is of the form $\Gamma_\theta$ for some permutation $\theta$.
\end{thm}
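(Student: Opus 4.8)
The plan is to show that an arbitrary cross-connection $\Gamma\colon\Pi(X)\to\Pi(X)$ (using the identification $N^\ast\mathscr{P}(X)\cong\Pi(X)$) must act on objects exactly as conjugation by a permutation. The starting point is that $\Gamma$, being a local isomorphism that is an order isomorphism on the poset $v\Pi(X)$, is in particular an order-automorphism of the partition lattice of $X$. I would first pin down the action of $\Gamma$ on the atoms of $v\Pi(X)$: these correspond (via the $\bar{\pi}\mapsto\pi$ anti-order-identification) to the coatoms of the partition lattice, i.e.\ to partitions with exactly one non-singleton block, and that block has exactly two elements. So such a partition is encoded by an unordered pair $\{x,y\}\subseteq X$. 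Since $\Gamma$ permutes these two-element-block partitions among themselves (being an order-automorphism, it preserves the poset rank), $\Gamma$ induces a permutation of the $2$-subsets of $X$ that is itself induced by an order-automorphism of the whole lattice.

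Next I would invoke the structure of order-automorphisms of the partition lattice of a set: every such automorphism (for $|X|\ge 2$, excluding the small exceptional cases where $|X|\le 4$ which one handles by hand, since the statement as phrased assumes only $|X|\ge 2$) is induced by a unique permutation $\theta$ of $X$. Concretely, one recovers $\theta$ from the action on $2$-subsets: if $\Gamma$ sends $\{x,y\}$-partitions in a way compatible with meets and joins, then there is a bijection $\theta$ of $X$ with $\Gamma(\overline{\pi_{\{x,y\}}})=\overline{\pi_{\{x\theta^{-1},y\theta^{-1}\}}}$ for all pairs, and hence $\Gamma(\bar{\pi})=\overline{\theta^{-1}(\pi)}=\theta(\bar{\pi})$ for \emph{all} $\bar{\pi}\in v\Pi(X)$, because every partition is a meet of two-block partitions and $\Gamma$ preserves meets. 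This identifies the object map of $\Gamma$ with that of $\Gamma_\theta$ from Proposition~\ref{prorpis}.

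It then remains to check that the morphism map of $\Gamma$ also agrees with that of $\Gamma_\theta$. Here I would use that $\Gamma$ is fully faithful together with the explicit description of morphisms in $\Pi(X)$ via functions $\eta\colon\pi_2\to\pi_1$. Because $\Gamma$ agrees with $\Gamma_\theta$ on objects and both are local isomorphisms restricting to isomorphisms of principal ideals $\langle\bar{\pi}\rangle\cong\langle\Gamma(\bar{\pi})\rangle$, the composite $\Gamma_\theta^{-1}\circ\Gamma$ is a normal-category automorphism of $\Pi(X)$ that fixes every object; one shows such an automorphism must fix every morphism. The quickest route is: an object-fixing automorphism fixes all inclusions and all retractions (the splitting data is determined by the poset), hence fixes the epimorphic components of all morphisms by normal factorization, and since an arbitrary morphism $\eta^\ast$ factors as $\zeta^\ast u^\ast \nu^\ast$ with $\zeta^\ast,\nu^\ast$ forced and $u^\ast$ an isomorphism between ideals already handled, it is fixed as well. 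Therefore $\Gamma=\Gamma_\theta$. Finally one verifies this $\theta$ is a genuine permutation (bijectivity comes from $\Gamma$ being a bijection on objects, by the cross-connection condition that every $A\in v\mathscr{P}(X)$ is a cross-section of some $\Gamma(\bar{\pi})$, forcing $\Gamma$ to be $v$-surjective).

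I expect the main obstacle to be the second step: rigorously extracting the permutation $\theta$ of $X$ from the order-automorphism of the partition lattice, and in particular dealing with (or explicitly dismissing) the low-cardinality exceptional cases, since for very small $X$ the partition lattice can have extra automorphisms. A secondary but more routine difficulty is bookkeeping the various dualizations: $\Gamma$ lands in $N^\ast\mathscr{P}(X)$, which we identify with $\Pi(X)$ via the functor $P$ of \eqref{eqnP}, and one must make sure the inclusion-reversing/preserving conventions (the order on $v\Pi(X)$ being opposite to the partition order) are threaded correctly so that the $\theta$ produced is the same one appearing in $\Gamma_\theta$ and not $\theta^{-1}$.
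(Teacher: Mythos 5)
Your strategy (read $\Gamma$ directly as an automorphism of the partition lattice and invoke rigidity of that lattice) is genuinely different from the paper's, but as written it has two real gaps. First, you never establish that $\Gamma$ is a bijection on objects, and the justification you offer is incorrect: the cross-connection condition of Lemma \ref{lemcxn} only says that every $A\in v\mathscr{P}(X)$ is a cross-section of some $\Gamma(\bar{\pi})$, and since the image of a local isomorphism is an ideal, this says exactly that the image of $\Gamma$ is a \emph{total} ideal of $\Pi(X)$ --- and Section \ref{secrrs} exhibits proper total ideals (the $n=5$ example), so totality does not force $v$-surjectivity. Object-injectivity is not addressed either; full faithfulness does not give it, since distinct objects of $\Pi(X)$ are isomorphic. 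Without bijectivity you cannot appeal to the Ore-type description of automorphisms of the partition lattice, and repairing this on the $\Pi(X)$ side is delicate: for infinite $X$ the principal ideal $\langle\bar{\pi}\rangle$ depends (up to isomorphism) only on $|X/\pi|$, so the ideal of a maximal object is abstractly isomorphic to ideals of non-maximal objects and no rank or counting argument is available. The paper avoids all of this by passing to the dual cross-connection $\Delta\colon\mathscr{P}(X)\to\mathscr{P}(X)$ (Remark \ref{rmkdual}): $\theta$ is read off from singletons via $\Delta(\{x\})=\{(x)\theta\}$, injectivity follows because $\Delta$ restricted to $\langle\{a,a'\}\rangle$ must be an isomorphism onto an ideal with two distinct atoms, surjectivity follows from the dual totality condition applied to the partition $\{X\setminus\{b\},\{b\}\}$, one then gets $\Delta=\Delta_{\theta}$ by the determination-on-singletons argument of (\ref{cone}), and finally $\Gamma=\Gamma_{\theta}$ by uniqueness of the dual (Remark \ref{rmkdual}, Proposition \ref{prorcross}).

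Second, your morphism-level step fails as stated: "an object-fixing automorphism fixes all retractions (the splitting data is determined by the poset)" is false in $\Pi(X)$, because a given inclusion $j(\bar{\pi}_1,\bar{\pi}_2)$ has in general many splittings $\zeta^{\ast}$ (one for each choice of a block of $\pi_2$ inside each block of $\pi_1$), so an object-fixing functor could a priori permute them; to pin down the morphism map you need an actual argument, e.g.\ via Theorem \ref{thm1} and Lemma \ref{lemH2}, which attach to each morphism of the dual a unique element $w$, or by analysing the induced automorphisms of the endomorphism monoids $\Pi(X)(\bar{\pi},\bar{\pi})$. There is also the bookkeeping slip you yourself anticipated: the partitions with a single doubleton block are the atoms of the partition lattice (the maximal objects of $v\Pi(X)$), not its coatoms, whereas the partitions that generate everything by meets are the two-block coatoms; as written you extract $\theta$ from the first family but propagate it with the second, and the "rank preservation" you invoke has no meaning for infinite $X$ (preservation of maximality does, and would suffice). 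So while the conclusion could likely be reached along your route, the two pivotal steps --- bijectivity of $\Gamma$ on objects and identification of its morphism map --- are currently unsupported.
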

\begin{proof}
Recall that a cross-connection $\Gamma\colon {\Pi(X)} \to {\mathscr{P}(X)}$ is a local isomorphism $\Gamma\colon  {\Pi(X)} \to {\Pi(X)}$ such that there is an associated dual cross-connection $\Delta \colon {\mathscr{P}(X)} \to {\mathscr{P}(X)}$ (see Definition \ref{cxn} and Remark \ref{rmkdual}). First observe that since $\Delta$ preserves inclusion and is an order isomorphism on ideals, if $A\subseteq B$ then $\Delta(A) = j(A,B)\Delta(B)$.

Then define 
$$(x)\theta = y \text{ such that } \Delta(\{x\}) = \{y\}.$$
Since $\Delta$ is a local isomorphism, $\Delta_{|\langle x \rangle}$ will be a bijection and hence $\theta$ is well defined.

Further $\theta$ is injective. Suppose that it is not injective. Then there exist $a, a' \in X$ such that $(a)\theta =(a')\theta =b$. This implies $\Delta(\{a\}) = \Delta(\{a'\}) = \{b\}$. Then $\Delta(\{a,a'\}) = \{b\}$. But this is clearly not possible as $\Delta(\{a,a'\})$ is not a bijection. Hence $\theta$ is injective.

Also $\theta$ is onto. Suppose that it is not onto. Then there exists $b\in X$ such that $\Delta(\{x\}) \neq \{b\}$ for any $x\in X$. That is $\Delta(X) \subseteq X\backslash \{b\}$. Now since $\Delta$ is a cross-connection, for every $\bar{\pi} \in {\Pi(X)}$ there exists a subset $A \in \mathscr{P}(X)$ such that $\Delta(A)$ is a cross-section of ${\pi}$. If we choose $\pi =\{X\backslash\{ b\}, \{ b\} \}$, then $\pi$ is a partition of $X$ and any cross-section $C$ of $\pi$ should contain $b$. But since $\Delta(X) \subseteq X\backslash \{b\}$, no such $C$ exists and this leads to a contradiction. Hence $\theta$ is onto.

Since $A\subseteq B$ implies $\Delta(A) = j(A,B)\Delta(B)$, we can argue as in \cite{ltx} (see (\ref{cone}) above) to show that any cross-connection $\Delta$ from ${\mathscr{P}(X)}$ to ${\Pi(X)}$ is one of the form $\Delta_{\theta}$ as defined by (\ref{eqndcrossrp}). Here for any $A \in v{\mathscr{P}(X)}$, $\Delta(A) = \theta(A)$ and given $f\colon  A \to B$, $\theta(f) = \theta^{-1} f \theta$ is the morphism from $\theta(A)$ to $\theta(B)$. Now by the duality of the cross-connections \cite{cross}, it can be shown (see Remark \ref{rmkdual} and Proposition \ref{prorcross}) that the dual cross-connection of $\Delta_{\theta}$ is $\Gamma_\theta$. This implies that $\Gamma = \Gamma_\theta$ and hence the theorem follows. 
\end{proof}
Since the cross-connection semigroup $\tilde{S}\Gamma_\theta$ is isomorphic to $Sing(X)$, we have the following corollary. 
\begin{cor}
Every cross-connection semigroup arising from the cross-connections between $\Pi(X)$ and $\mathscr{P}(X)$ is isomorphic to $Sing(X)$.
\end{cor}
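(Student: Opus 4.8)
The plan is to read this off directly from the two theorems that immediately precede it. By construction, any cross-connection semigroup arising from the pair $(\Pi(X),\mathscr{P}(X))$ is of the form $\tilde{S}\Gamma$ for some cross-connection $\Gamma$ between these two normal categories. First I would apply Theorem \ref{thmmain}, which classifies such cross-connections: $\Gamma$ must equal $\Gamma_\theta$ for a suitable permutation $\theta$ of $X$. Then Theorem \ref{thmcrsrtx} applies verbatim and gives $\tilde{S}\Gamma = \tilde{S}\Gamma_\theta \cong Sing(X)$, via the explicit isomorphism $\psi\colon a \mapsto (a,\theta^{-1}a\theta)$ exhibited in its proof. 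That is precisely the assertion of the corollary.

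The only point that deserves a sentence of care is the word \emph{between} in the statement, since a cross-connection is an inherently directed object: one should make sure that cross-connections from $\mathscr{P}(X)$ to $\Pi(X)$ do not contribute any new semigroups. For this I would invoke Remark \ref{rmkdual} together with Proposition \ref{prorcross}: every cross-connection comes paired with a unique dual cross-connection in the opposite direction, and the linked pairs $(\gamma,\delta)$ defining $\tilde{S}\Gamma$ are carried to the linked pairs $(\delta,\gamma)$ defining the semigroup of the dual under the duality $\chi_\Gamma$, so the two cross-connection semigroups are isomorphic. Since the dual of $\Gamma_\theta$ is $\Delta_\theta$ (Proposition \ref{prorcross}), and conversely the dual of any cross-connection $\mathscr{P}(X)\to\Pi(X)$ is, by the classification of Theorem \ref{thmmain}, again some $\Gamma_\theta$, the family of cross-connection semigroups is exhausted by the $\tilde{S}\Gamma_\theta$.

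There is no genuine obstacle here: all the substantive work has already been carried out in Theorems \ref{thmmain} and \ref{thmcrsrtx}, and the corollary is a pure bookkeeping consequence. If anything, the ``hard part'' is only to phrase the argument so that the classification in Theorem \ref{thmmain} is seen to cover \emph{every} cross-connection semigroup in play, and to note that the isomorphism type of $\tilde{S}\Gamma$ is invariant under passing to the dual cross-connection.
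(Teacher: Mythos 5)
Your proposal is correct and follows essentially the same route as the paper: the corollary is obtained by combining Theorem \ref{thmmain} (every cross-connection from $\Pi(X)$ to $\mathscr{P}(X)$ is some $\Gamma_\theta$) with Theorem \ref{thmcrsrtx} ($\tilde{S}\Gamma_\theta \cong Sing(X)$). Your extra remark on the dual direction via Remark \ref{rmkdual} and Proposition \ref{prorcross} is a harmless (and reasonable) elaboration of the same argument, not a different method.
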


Thus, in this section, we have shown that although different permutations give rise to different cross-connections, the resulting cross-connection semigroups are all isomorphic.

We have also seen that $Sing(X)$, although not a monoid, behaves much like one. The fact that it is constructible from one of $\mathscr{P}(X)$ or $\Pi(X)$ is worth noticing. It is the result of the strong lattice structure present among the principal ideals of $Sing(X)$. In fact, it is exactly this structure which restricts the \emph{cross-connections} between the underlying categories; which in turn results in all cross-connection smeigroups to be isomorphic.  

\section{Right reductive subsemigroups of $Sing(X)$}\label{secrrs}
One class of semigroups we can construct using cross-connections are the right reductive subsemigroups of $Sing(X)$ with principal left ideals isomorphic to $\mathscr{P}(X)$. Recall that a semigroup $S$ is said to be \emph{right reductive}\index{semigroup!right reductive} if the \emph{right-regular representation}\index{representation!right-regular} of $S$, $\rho\colon a\mapsto \rho_a$ is injective \cite{clif}. 

For constructing them we need to identify the total ideals\index{ideal!total} of $\Pi(X)$. An ideal $\mathcal{I}$ in the normal dual $N^\ast\mathcal{C}$ of a normal category $\mathcal{C}$ is said to be a \emph{total ideal}\index{ideal!total} in $N^\ast\mathcal{C}$ if for all $c \in v\mathcal{C}$, there is some $H(\epsilon;-) \in v\mathcal{I}$ such that $c \in MH(\epsilon;-)$. Using Lemma \ref{lemm}, we have the following.
\begin{lem}\label{lemtot}
An ideal $\mathcal{I}$ is a total ideal of $\Pi(X)$ if and only if for every proper maximal subset $A$ of $X$, there exists $\bar{\pi} \in v\mathcal{I}$ such that $A$ is a cross-section\index{cross-section} of $\pi$.
\end{lem}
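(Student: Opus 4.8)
The plan is to directly translate the definition of total ideal in $N^\ast\mathscr{P}(X)$ into the language of cross-sections, using the identification of $M$-sets established in Lemma \ref{lemm}. Recall from Theorem \ref{thmP} that $\Pi(X) \cong N^\ast\mathscr{P}(X)$, so an ideal $\mathcal{I}$ of $\Pi(X)$ corresponds to an ideal in the normal dual of $\mathscr{P}(X)$. By the definition quoted just above, $\mathcal{I}$ is total precisely when for every $A \in v\mathscr{P}(X)$ there is some $H(\epsilon;-) \in v\mathcal{I}$ with $A \in MH(\epsilon;-)$. Under the identification preceding Lemma \ref{lemm} — namely, that for $\sigma \in E(T\mathscr{P}(X))$ with associated partition $\pi$ we write $M(\pi) = MH(\sigma;-) = M\sigma$ — and using Lemma \ref{lemm} itself, the condition $A \in MH(\epsilon;-)$ reads exactly as $A$ being a cross-section of the partition $\pi$ corresponding to the $H$-functor, where $\bar\pi$ is the corresponding object of $\mathcal{I}$.

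The remaining point is to reduce the quantifier ``for every $A \in v\mathscr{P}(X)$'' to ``for every proper maximal subset $A$ of $X$''. First I would observe that one direction is trivial: if the cross-section condition holds for all proper subsets $A$, it holds in particular for the maximal ones. For the converse, suppose $A$ is an arbitrary non-empty proper subset of $X$; pick a proper maximal subset $A'$ with $A \subseteq A'$ (for instance $A' = X\setminus\{x\}$ for some $x \notin A$, which exists since $A$ is proper). By hypothesis there is $\bar\pi \in v\mathcal{I}$ with $A'$ a cross-section of $\pi$. Now I would use the fact that $\mathcal{I}$, being an ideal in a normal category, is a full subcategory closed under subobjects, together with the following point: if $A'$ is a cross-section of $\pi$ and $A \subseteq A'$, then $A$ is a cross-section of the partition $\pi'$ obtained from $\pi$ by merging the blocks not meeting $A$ into a single adjacent block (this is exactly the construction of $\gamma_\eta$ appearing in the proof of the normal factorization theorem). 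One checks $\bar{\pi'} \subseteq \bar\pi$, so $\bar{\pi'} \in v\mathcal{I}$, and $A$ is a cross-section of $\pi'$, as required.

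The step I expect to be the main obstacle is the last one: verifying carefully that the coarsened partition $\pi'$ with $A$ as a cross-section really does correspond to a \emph{subobject} of $\bar\pi$ inside $\Pi(X)$ — that is, that the order relation $\bar{\pi'} \leq \bar\pi$ (equivalently $\pi \leq \pi'$ as partitions, i.e. every block of $\pi$ sits inside a block of $\pi'$) holds for the merge construction, and hence that $\bar{\pi'}$ lies in the full subcategory $\mathcal{I}$. This is a routine but slightly fiddly check about how merging blocks interacts with the partial order on $\Pi(X)$; everything else is a direct unwinding of definitions via Lemma \ref{lemm} and Theorem \ref{thmP}.
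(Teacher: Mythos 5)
Your proof is correct and follows essentially the same route as the paper: the forward direction is the same unwinding of the total-ideal definition through the identification $N^\ast\mathscr{P}(X)\cong\Pi(X)$ and Lemma \ref{lemm}, and the converse uses the ideal property of $\mathcal{I}$ exactly as the paper does. The only difference is that you spell out the coarsening step (extending $A$ to a maximal $A'$, then merging the blocks missing $A$ into a chosen block to get $\bar{\pi'}\subseteq\bar{\pi}$ with $A$ a cross-section of $\pi'$), which the paper leaves implicit in the phrase ``since $\mathcal{I}$ is an ideal''; your worry about the order check is unfounded, since the merged partition is coarser than $\pi$ and hence is a subobject under the paper's convention.
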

\begin{proof}
Suppose $\mathcal{I}$ is a total ideal of $\Pi(X)$ and let $A$ be a maximal proper subset of $X$. Then by the identification in  (\ref{eqnP}) of $N^*\mathscr{P}(X)$ with $\Pi(X)$, there is some $\bar{\pi} \in v\mathcal{I}$ such that $A \in MH(e;-)$ where $\pi$ is the partition induced by $e$. By Lemma \ref{lemm}, this implies that for every proper maximal subset $A$ of $X$, there exists $\bar{\pi} \in v\mathcal{I}$ such that $A$ is a cross-section of $\pi$.

Conversely assume that for every proper maximal subset $A$ of $X$, there exists $\bar{\pi} \in v\mathcal{I}$ such that $A$ is a cross-section of $\pi$. Since $\mathcal{I}$ is an ideal, for every subset $A$ of $X$ there exists $\bar{\pi} \in v\mathcal{I}$ such that $A$ is a cross-section of $\pi$ and using Lemma \ref{lemm}, we see that $\mathcal{I}$ is a total ideal of $\Pi(X)$.
\end{proof}
\begin{lem}\label{lemrr1}
If $\mathcal{I}$ is a total ideal of $\Pi(X)$, then $(\mathcal{I},\mathscr{P}(X); \Gamma)$ is a cross-connection\index{cross-connection} where 
\begin{equation}\label{eqngammatx}
\Gamma \colon  \mathcal{I} \to \Pi(X) \text{ is the inclusion functor } j(\mathcal{I},\Pi(X))
\end{equation}
\end{lem}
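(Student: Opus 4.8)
The plan is to check that the triplet $(\mathcal{I},\mathscr{P}(X);\Gamma)$ satisfies the two requirements of Definition~\ref{cxn}, after first noting that $\mathcal{I}$ is itself a normal category so that the notion of cross-connection even applies to it. Since an ideal $\mathcal{I}$ is a \emph{full} subcategory of $\Pi(X)$ that is closed downward under subobjects, every retraction, isomorphism and inclusion appearing in a normal factorization (computed in $\Pi(X)$) of a morphism of $\mathcal{I}$ runs between objects of $\mathcal{I}$, hence lies in $\mathcal{I}$; likewise the splitting of an inclusion of $\mathcal{I}$ stays in $\mathcal{I}$; and the restriction to $v\mathcal{I}$ of an idempotent normal cone of $\Pi(X)$ with vertex $\bar{\pi}\in v\mathcal{I}$ is an idempotent normal cone of $\mathcal{I}$. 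Thus $\mathcal{I}$ is a normal category.

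Next I would verify that $\Gamma\colon\mathcal{I}\to N^\ast\mathscr{P}(X)$ --- that is, the inclusion $j(\mathcal{I},\Pi(X))$ followed by the category isomorphism $Q\colon\Pi(X)\to N^\ast\mathscr{P}(X)$ of Theorem~\ref{thmP} --- is a local isomorphism. Since $Q$ is a normal category isomorphism, it suffices to check this for the inclusion functor $j(\mathcal{I},\Pi(X))$. It is plainly inclusion preserving, and fully faithful because $\mathcal{I}$ is a full subcategory. For the last clause, fix $\bar{\pi}\in v\mathcal{I}$: by downward closure, the subobjects of $\bar{\pi}$ in $\mathcal{I}$ are exactly its subobjects in $\Pi(X)$, so the principal ideal $\langle\bar{\pi}\rangle$ formed in $\mathcal{I}$ coincides with $\langle\bar{\pi}\rangle=\langle\Gamma(\bar{\pi})\rangle$ formed in $\Pi(X)$, and $j(\mathcal{I},\Pi(X))$ restricts to it as the identity functor, which is an isomorphism onto $\langle\Gamma(\bar{\pi})\rangle$. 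Hence $\Gamma$ is a local isomorphism.

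Finally I would dispatch the ``filling'' condition: for each $A\in v\mathscr{P}(X)$ there must be $\bar{\pi}\in v\mathcal{I}$ with $A\in M\Gamma(\bar{\pi})$. Under the identification of $\Pi(X)$ with $N^\ast\mathscr{P}(X)$ we have $\Gamma(\bar{\pi})=H(e;-)$ with $\pi_e=\pi$, and by Lemma~\ref{lemm} together with the convention $MH(\sigma;-)=M(\pi)$ recorded after it, $M\Gamma(\bar{\pi})=MH(e;-)=M(\pi)=\{B\subseteq X : B\text{ is a cross-section of }\pi\}$. So the condition asserts that every $A\in v\mathscr{P}(X)$ is a cross-section of some $\pi$ with $\bar{\pi}\in v\mathcal{I}$, which is precisely the hypothesis that $\mathcal{I}$ is a total ideal (equivalently, the statement extracted in the proof of Lemma~\ref{lemtot}). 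This completes the verification. The argument is essentially bookkeeping; the points that need a second look are the local-isomorphism clause on principal ideals, where the downward closure of $\mathcal{I}$ is the crucial input, and the preliminary remark that $\mathcal{I}$ is a normal category.
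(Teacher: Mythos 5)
Your proof is correct and follows essentially the same route as the paper's: verifying that the inclusion functor is inclusion preserving, fully faithful, and restricts to the identity on each principal ideal $\langle\bar{\pi}\rangle$ (so it is a local isomorphism), and then obtaining the covering condition $A\in M\Gamma(\bar{\pi})$ from the total-ideal hypothesis via the cross-section description of $M$-sets (Lemmas \ref{lemm}, \ref{lemtot}, packaged in the paper as Lemma \ref{lemcxn}). The only additions are your preliminary check that $\mathcal{I}$ is itself a normal category and the explicit composition with the isomorphism $Q$, which the paper leaves implicit.
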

\begin{proof}
By Lemma \ref{lemcxn}, $(\mathcal{I},\mathscr{P}(X);\Gamma)$ is a cross-connection if $\Gamma\colon  \mathcal{I} \to \Pi(X)$ is a local isomorphism\index{local isomorphism} such that for every $A \in v\mathscr{P}(X)$, there is some $\bar{\pi} \in v\mathcal{I}$ such that $A $ is a cross-section of $\Gamma(\bar{\pi})$. Since $\Gamma$ is an inclusion functor, $\Gamma$ is inclusion preserving and faithful. Since $\mathcal{I}$ is a full subcategory of $\Pi(X)$, $\Gamma$ is full. Also for each ideal $\langle \bar{\pi} \rangle$ for $\bar{\pi} \in v\mathcal{I}$, $\Gamma_{|\langle \bar{\pi} \rangle}$ is the identity morphism from $\langle \bar{\pi} \rangle $ to $ \langle \Gamma(\bar{\pi}) \rangle = \langle \bar{\pi} \rangle $. So $\Gamma$ is a local isomorphism. Since $\mathcal{I}$ is a total ideal, by Lemma \ref{lemtot}, for every subset $A$ of $X$, there exists $\bar{\pi} \in v\mathcal{I}$ such that $A$ is a cross-section of $\pi $. Hence $\Gamma$ is a cross-connection.
\end{proof}
\begin{lem}
Let $\Delta $ be the identity functor\index{functor!identity} from $\mathscr{P}(X)$ to $\mathscr{P}(X)$. Then for a total ideal $\mathcal{I}$ of $\Pi(X)$, the triplet $(\mathscr{P}(X), \mathcal{I} ; \Delta)$ is a cross-connection\index{cross-connection!dual} such that for $A \in v\mathscr{P}(X)$ and $\bar{\pi} \in v\Pi(X)$, $A \in M\Gamma(\bar{\pi}) $ if and only if $\bar{\pi} \in M\Delta(A)$. Hence $\Delta$ is the dual cross-connection of $\Gamma$.
\end{lem}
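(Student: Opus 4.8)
\emph{Proof proposal.} The plan is to avoid constructing the dual of $\Gamma$ by hand and instead exploit the uniqueness of the dual cross-connection (Remark \ref{rmkdual}): I will show that the identity functor on $\mathscr{P}(X)$, once its codomain is correctly read as $N^\ast\mathcal{I}$, satisfies the defining property of the dual of $\Gamma$, so that it must \emph{be} that dual. The one point that needs genuine care — and the step I expect to be the main obstacle — is pinning down an identification $N^\ast\mathcal{I}\cong\mathscr{P}(X)$ that makes the statement ``$\Delta$ is the identity functor from $\mathscr{P}(X)$ to $\mathscr{P}(X)$'' meaningful; everything after that is transport of already-established facts.

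For that identification I would run the argument dual to Theorem \ref{thmP2}, restricted along the inclusion $j(\mathcal I,\Pi(X))$. Since $\mathcal{I}$ is a \emph{total} ideal, Lemma \ref{lemtot} guarantees that every proper subset $A$ of $X$ still arises as $M$-set data for $\mathcal{I}$, so no $H$-functor and no natural transformation is lost on restricting from $N^\ast\Pi(X)$ to $N^\ast\mathcal{I}$; composing with the isomorphism $R\colon N^\ast\Pi(X)\to\mathscr{P}(X)$ of (\ref{eqnR}) identifies $N^\ast\mathcal{I}$ with $\mathscr{P}(X)$, under which an $H$-functor of $N^\ast\mathcal{I}$ is again named by a subset of $X$. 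Read through this identification, $\Delta$ is the identity functor, hence inclusion-preserving, fully faithful, and an isomorphism on each principal ideal, so only the covering condition of Definition \ref{cxn} remains. Here I would invoke the dual of Lemma \ref{lemm} to get $M\Delta(A)=\{\,\bar\pi\in v\mathcal{I}: A\text{ is a cross-section of }\pi\,\}$, and (as in Lemma \ref{lemrr1}) $M\Gamma(\bar\pi)=M(\pi)=\{\,A: A\text{ is a cross-section of }\pi\,\}$; since $\mathcal{I}$ is total the former set is non-empty for every $\bar\pi$, so $\Delta$ is a local isomorphism of the required kind, and comparing the two descriptions yields at once
$$A\in M\Gamma(\bar\pi)\iff A\text{ is a cross-section of }\pi\iff\bar\pi\in M\Delta(A),$$
which is the asserted $M$-set compatibility.

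It remains to see that $\Delta$ is the dual of $\Gamma$. Because $\Gamma=j(\mathcal I,\Pi(X))$, formula (\ref{eqngx1}) gives $\Gamma(A,\bar{\pi})=\{\,a\in Sing(X):\text{Im }a\subseteq A\text{ and }\bar{\pi}_a\subseteq\bar{\pi}\,\}$ with $\Gamma(f,v^\ast)\colon a\mapsto(va)f$ by (\ref{eqngx2}); because $\Delta$ is the identity, (\ref{eqndx1}) and (\ref{eqndx2}) give exactly the same set $\Delta(A,\bar{\pi})$ and the same action $\Delta(f,v^\ast)\colon a\mapsto(va)f$. Thus the two bifunctors $\Gamma(-,-)$ and $\Delta(-,-)$ on $\mathscr{P}(X)\times\mathcal{I}$ coincide, so the identity natural transformation is a natural isomorphism $\chi_\Gamma\colon\Gamma(-,-)\to\Delta(-,-)$; hence $\Delta$ is a dual cross-connection of $\Gamma$, and by the uniqueness in Remark \ref{rmkdual} it is \emph{the} dual. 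Together with the $M$-set equivalence displayed above, this is precisely the content of the lemma.
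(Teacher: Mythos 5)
Your proposal is correct in substance and, for most of its length, runs parallel to the paper's own proof: like the paper, you use totality of $\mathcal{I}$ (via Lemma \ref{lemtot}) to identify $N^\ast\mathcal{I}$ with $\mathscr{P}(X)$ so that $\Delta=1_{\mathscr{P}(X)}$ is meaningful, you observe that the identity functor is trivially inclusion-preserving, fully faithful and an isomorphism on principal ideals, and you obtain the $M$-set equivalence $A\in M\Gamma(\bar\pi)\iff A\text{ is a cross-section of }\pi\iff\bar\pi\in M\Delta(A)$ from Lemma \ref{lemm} and its dual. Where you genuinely diverge is the last step: the paper concludes that $\Delta$ is the dual of $\Gamma$ directly from this $M$-set compatibility, citing the criterion in \cite{cross}, whereas you conclude it by computing that the bifunctors $\Gamma(-,-)$ and $\Delta(-,-)$ coincide (which in effect anticipates Proposition \ref{procross}) and then invoking uniqueness from Remark \ref{rmkdual}. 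Be aware that Remark \ref{rmkdual} only asserts that the dual of $\Gamma$ exists and is unique; it does not state that any cross-connection in the reverse direction whose bifunctor is naturally isomorphic (or even equal) to $\Gamma(-,-)$ must be that dual, so as written your final inference leans on a fact from the general theory that the paper never formulates, while the recognition criterion the paper does use is precisely the $M$-set condition. Since you have already established that condition, your argument closes cleanly by the paper's route; your bifunctor computation is not wasted either, as it is exactly what is needed later to identify the duality $\chi_\Gamma$ as the identity. One small imprecision: the covering condition for $(\mathscr{P}(X),\mathcal{I};\Delta)$ quantifies over $\bar\pi\in v\mathcal{I}$ (for each such $\bar\pi$ one needs some $A$ with $\bar\pi\in M\Delta(A)$), which holds because every non-identity partition has a proper cross-section; totality is needed for the identification $N^\ast\mathcal{I}\cong\mathscr{P}(X)$, not for this covering, so your phrase ``non-empty for every $\bar\pi$'' conflates the two quantifiers, though nothing breaks.
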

\begin{proof}
First, since $\mathcal{I}$ is a total ideal of $\Pi(X)$, the category $N^*\mathcal{I}$ is isomorphic to $\mathscr{P}(X)$. Hence $\Delta = 1_{\mathscr{P}(X)}$ is inclusion preserving, fully faithful and for each $A \in v\mathscr{P}(X)$, the restriction $F_{|\langle A \rangle}$ is an isomorphism (in fact identity functor) of the ideal $\langle A \rangle$ onto $\langle \Delta(A) \rangle = \langle A \rangle $. Also clearly the image of $v\Delta$ is total in $\mathscr{P}(X)$ and hence $\Delta$ is a cross-connection.

Then for $A \in v\mathscr{P}(X)$ and $\bar{\pi} \in v\Pi(X)$, let $A \in M\Gamma(\bar{\pi}) $. Since $\Gamma = j(\mathcal{I},\Pi(X))$, we have $\Gamma(\bar{\pi}) = \bar{\pi}$. Then by the dual of Lemma \ref{lemm}, $A$ is a cross-section of $\pi$. Now since $\Delta = 1_{\mathscr{P}(X)}$, we observe that $\Delta(A)= A$ and so $\bar{\pi} \in M\Delta(A)$. Similarly the converse also holds. Hence by \cite{cross}, $\Delta$ is the dual cross-connection of $\Gamma$.
\end{proof}
Observe that although each cross-connection has a unique dual cross-connection, it is possible for different cross-connections to have the same dual, as seen in the above lemma. Now given a cross-connection $\Gamma$ and its dual $\Delta$, the semigroups $U\Gamma$ and $U\Delta$ associated with them is described in the next lemma.
\begin{lem}\label{lemugamma}
Given a cross-connection $\Gamma = j(\mathcal{I},\Pi(X))$ and the dual cross-connection $\Delta = 1_{\mathscr{P}(X)}$,
$$U\Gamma \:=\: \bigcup\{  a \in Sing(X) :  \text{ Im }  a \subseteq A \text{ and } \bar{\pi}_{ a} \subseteq \bar{\pi}  \: : \: (A,\bar{\pi}) \in v\mathscr{P}(X) \times v\mathcal{I}\}$$
$$U\Delta \:=\: \bigcup\{  b \in Sing(X) :  \text{ Im }  b \subseteq A \text{ and } \bar{\pi}_{ b} \subseteq \bar{\pi} \: : \: (A,\bar{\pi}) \in v\mathscr{P}(X) \times v\mathcal{I}\} .$$
\end{lem}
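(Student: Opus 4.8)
The plan is to unwind the definitions of the two bifunctors $\Gamma(-,-)\colon\mathscr{P}(X)\times\mathcal{I}\to\mathbf{Set}$ and $\Delta(-,-)\colon\mathscr{P}(X)\times\mathcal{I}\to\mathbf{Set}$ attached to the cross-connection $\Gamma=j(\mathcal{I},\Pi(X))$ of (\ref{eqngammatx}) and its dual $\Delta=1_{\mathscr{P}(X)}$, and then read off $U\Gamma$ and $U\Delta$ from $U\Gamma=\bigcup\{\Gamma(A,\bar{\pi}):(A,\bar{\pi})\in v\mathscr{P}(X)\times v\mathcal{I}\}$ and the analogous formula for $U\Delta$. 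Nothing genuinely new has to be proved: the computation is the same one already carried out for the permutation cross-connections, only with the inclusion (respectively, the identity) functor in place of $\theta$.

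For $U\Gamma$: since $\Gamma$ is, modulo the isomorphism $N^\ast\mathscr{P}(X)\cong\Pi(X)$ of Theorem \ref{thmP}, the inclusion functor, we have $\Gamma(\bar{\pi})=\bar{\pi}$ for every $\bar{\pi}\in v\mathcal{I}$. Identifying $\Gamma(\bar{\pi})$ with the $H$-functor $H(e;-)$ where $\pi_e=\pi$ and applying Lemma \ref{lemH}, one obtains $\Gamma(A,\bar{\pi})=H(e;A)=\{a\in Sing(X):\text{Im }a\subseteq A\text{ and }\pi_a\supseteq\pi_e\}$; rewriting $\pi_a\supseteq\pi_e$ as $\bar{\pi}_a\subseteq\bar{\pi}$ in the subobject order of $\Pi(X)$ puts this in the shape of (\ref{eqngx1}) with $\Gamma(\bar{\pi})=\bar{\pi}$, namely $\Gamma(A,\bar{\pi})=\{a\in Sing(X):\text{Im }a\subseteq A\text{ and }\bar{\pi}_a\subseteq\bar{\pi}\}$. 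Taking the union over all $(A,\bar{\pi})\in v\mathscr{P}(X)\times v\mathcal{I}$ gives precisely the asserted expression for $U\Gamma$.

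For $U\Delta$ one argues dually. Because $\mathcal{I}$ is a total ideal, $N^\ast\mathcal{I}\cong\mathscr{P}(X)$, as used in the proof of the preceding lemma, and under this identification $\Delta=1_{\mathscr{P}(X)}$, so $\Delta(A)$ corresponds to $A$ itself. Substituting $\Delta(A)=A$ into the description (\ref{eqndx1}) of the dual bifunctor — which comes from the dual of Lemma \ref{lemH} together with the anti-isomorphism $T\Pi(X)\cong Sing(X)^{\text{op}}$ — yields $\Delta(A,\bar{\pi})=\{b\in Sing(X):\text{Im }b\subseteq A\text{ and }\bar{\pi}_b\subseteq\bar{\pi}\}$, and unioning over $(A,\bar{\pi})\in v\mathscr{P}(X)\times v\mathcal{I}$ gives the claimed formula for $U\Delta$.

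There is no real obstacle here; the whole content is a substitution into formulas already in hand. The only points requiring care are bookkeeping ones: first, that the index set of the defining union is now $v\mathscr{P}(X)\times v\mathcal{I}$ rather than $v\mathscr{P}(X)\times v\Pi(X)$ — this restriction is exactly what makes $U\Gamma$ and $U\Delta$ proper subsemigroups in general, and hence what will eventually produce the right reductive subsemigroups rather than all of $Sing(X)$ — and, second, that $\mathcal{I}$ being a full subcategory of $\Pi(X)$ is what allows the earlier bifunctor computations to restrict verbatim; one also has to keep straight the translation between refinement of partitions viewed as equivalence relations ($\pi_a\supseteq\pi_e$) and the subobject order in $\Pi(X)$ ($\bar{\pi}_a\subseteq\bar{\pi}$).
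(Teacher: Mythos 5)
Your argument is correct and is essentially the reasoning the paper intends: the lemma is stated without proof precisely because it is the direct specialization of the bifunctor formulas (\ref{eqngx1}) and (\ref{eqndx1}) to $\Gamma(\bar{\pi})=\bar{\pi}$ and $\Delta(A)=A$, with the union taken over $v\mathscr{P}(X)\times v\mathcal{I}$, exactly as you do (and as was done for Lemma \ref{lemrtugamma} in the permutation case). Your extra care in routing the identification through Lemma \ref{lemH} and the order translation $\pi_a\supseteq\pi_e\Leftrightarrow\bar{\pi}_a\subseteq\bar{\pi}$ only makes explicit what the paper leaves implicit.
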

The next lemma characterizes the duality associated with the cross-connection defined in  (\ref{eqngammatx}) and the corresponding linked cones. 
\begin{pro}\label{procross}
Given a cross-connection $\Gamma = j(\mathcal{I},\Pi(X))$ and the dual cross-connection $\Delta = 1_{\mathscr{P}(X)}$; $\chi_{\Gamma} \colon  \Gamma(-,-) \to \Delta(-,-)$ is defined as $\chi_{\Gamma}(A,\bar{\pi}) \colon   a \mapsto  a$. Hence $ a$ is linked to $ b$ if and only if $ a =  b$.
\end{pro}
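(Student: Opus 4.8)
The plan is to run the argument of Proposition \ref{prorcross} in the degenerate case where the conjugating permutation is the identity, so that the duality collapses to the identity map on the relevant hom--sets.

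First I would record that the two bifunctors agree on objects. By (\ref{eqngx1}), since $\Gamma = j(\mathcal{I},\Pi(X))$ fixes every object, $\Gamma(\bar{\pi}) = \bar{\pi}$, so
\[
\Gamma(A,\bar{\pi}) = \{\, a \in Sing(X) : \text{Im } a \subseteq A \text{ and } \bar{\pi}_a \subseteq \bar{\pi} \,\};
\]
and by (\ref{eqndx1}), since $\Delta = 1_{\mathscr{P}(X)}$ gives $\Delta(A) = A$, the set $\Delta(A,\bar{\pi})$ is literally the same (this is also immediate from Lemma \ref{lemugamma}). Hence $\chi_{\Gamma}(A,\bar{\pi}) \colon a \mapsto a$ is a bijection of $\Gamma(A,\bar{\pi})$ onto $\Delta(A,\bar{\pi})$.

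Next I would verify naturality: for a morphism $(f,\eta^\ast) \colon (A,\bar{\pi}_1) \to (B,\bar{\pi}_2)$ in $\mathscr{P}(X) \times \mathcal{I}$, the square with horizontal edges $\chi_{\Gamma}(A,\bar{\pi}_1), \chi_{\Gamma}(B,\bar{\pi}_2)$ and vertical edges $\Gamma(f,\eta^\ast), \Delta(f,\eta^\ast)$ must commute; as the horizontal edges are identity maps, this reduces to the equality $\Gamma(f,\eta^\ast) = \Delta(f,\eta^\ast)$ of functions. Now (\ref{eqngx2}) gives $\Gamma(f,\eta^\ast) \colon a \mapsto (ua)f$ with $u^\ast = \Gamma(\eta^\ast) = \eta^\ast$, while (\ref{eqndx2}) gives $\Delta(f,\eta^\ast) \colon a \mapsto (va)g$ with $g = \Delta(f) = f$ and $v$ the transformation associated with $\eta^\ast$; since $\Gamma$ leaves $\eta^\ast$ unchanged, $u$ is this same transformation, so $u = v$ and $g = f$, whence the two functions coincide. (This is exactly the computation in the proof of Proposition \ref{prorcross}, read with $\theta$ the identity permutation.) Therefore $\chi_{\Gamma}$ is a natural isomorphism $\Gamma(-,-) \to \Delta(-,-)$.

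Finally, the preceding lemma already exhibits $\Delta = 1_{\mathscr{P}(X)}$ as the dual cross-connection of $\Gamma$; by uniqueness of the duality (see \cite{cross} and Remark \ref{rmkdual}) it follows that $\chi_{\Gamma}$ is precisely that duality, which is the asserted formula. The claim about linked cones is then immediate from the definition of linking: $a \in U\Gamma$ is linked to $b \in U\Delta$ iff there is $(A,\bar{\pi}) \in v\mathscr{P}(X) \times v\mathcal{I}$ with $a \in \Gamma(A,\bar{\pi})$ and $b = \chi_{\Gamma}(A,\bar{\pi})(a) = a$, i.e.\ iff $a = b$ (for the converse one uses that $a \in U\Gamma$ already means $a \in \Gamma(A,\bar{\pi})$ for some $(A,\bar{\pi})$). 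The only step requiring genuine care is the identity $u = v$ in the naturality check: there one must unwind the isomorphisms $N^\ast\mathscr{P}(X) \cong \Pi(X)$ and $N^\ast\mathcal{I} \cong \mathscr{P}(X)$ and invoke Lemmas \ref{lemH} and \ref{lemH2} to confirm that the transformation extracted from $\eta^\ast$ on the $\Gamma$-side coincides with the one extracted on the $\Delta$-side; everything else is a routine specialisation of Proposition \ref{prorcross} and poses no real obstacle.
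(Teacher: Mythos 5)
Your proposal is correct and follows essentially the same route as the paper: both observe that with $\Gamma = j(\mathcal{I},\Pi(X))$ and $\Delta = 1_{\mathscr{P}(X)}$ the two bifunctors coincide, so the identity map is the natural isomorphism, and the linking criterion then falls out of the definition. Your explicit naturality check and the appeal to uniqueness of the dual merely spell out details the paper leaves implicit.
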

\begin{proof}
First observe that given the cross-connection $\Gamma \:=\: j(\mathcal{I},\Pi(X))$, $$\Gamma(A,\bar{\pi}) = \Delta(A,\bar{\pi}) = \{  a : \text{ Im }  a \subseteq A \text{ and } \bar{\pi}_{ a} \subseteq \bar{\pi} \}.$$ So if we define $\chi_{\Gamma}(A,\bar{\pi}) \colon   a \mapsto  a$, then $\chi_{\Gamma}(A,\bar{\pi}) = 1_{\Gamma(A,\bar{\pi})}$ and hence is clearly a natural isomorphism.

Now given a cross-connection $\Gamma \:=\: j(\mathcal{I},\Pi(X))$, we shall say that $ a \in U\Gamma$ is linked\index{cone!linked} to $ b \in U\Delta$ if there is a $(A,\bar{\pi}) \in v\mathscr{P}(X) \times v\mathcal{I}$ such that $ a \in \Gamma(A,\bar{\pi})$ and $  b = \chi_\Gamma(A,\bar{\pi})( a)$. Hence it is clear that $ a$ is linked to $ b$ if and only if $ a =  b$.
\end{proof}
Having characterized the linked cones for the cross-connection, now we are in a position to describe the cross-connection semigroup\index{cross-connection!semigroup}.
\begin{thm}\label{thmcrstx}
Given a cross-connection $\Gamma \:=\: j(\mathcal{I},\Pi(X))$, the cross-connection semigroup $\tilde{S}\Gamma$ is isomorphic\index{isomorphism!of semigroups} to a right reductive\index{semigroup!right reductive} regular subsemigroup of $Sing(X)$.
\end{thm}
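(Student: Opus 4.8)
The plan is to combine the general cross-connection machinery with the explicit descriptions already obtained for $\Gamma = j(\mathcal{I},\Pi(X))$, its dual $\Delta = 1_{\mathscr{P}(X)}$, and the linking relation from Proposition \ref{procross}. First I would invoke the general theory (as recalled in Section \ref{sccxn} and \cite{cross}) which guarantees that $\tilde{S}\Gamma$ is \emph{some} regular semigroup; the real content is to realize it concretely as a subsemigroup of $Sing(X)$ and to verify right reductivity. Since Proposition \ref{procross} shows that $a$ is linked to $b$ if and only if $a = b$, the set of linked pairs is just the ``diagonal'', so
\begin{equation*}
\tilde{S}\Gamma \;=\; \{\, (a,a) : a \in U\Gamma \,\},
\end{equation*}
and using Lemma \ref{lemugamma} this is in bijection with
\begin{equation*}
U\Gamma \;=\; \bigcup\{\, a \in Sing(X) : \text{Im }a \subseteq A \text{ and } \bar{\pi}_a \subseteq \bar{\pi},\ (A,\bar{\pi}) \in v\mathscr{P}(X)\times v\mathcal{I} \,\}.
\end{equation*}

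Next I would check that this bijection $\psi\colon (a,a) \mapsto a$ is a semigroup homomorphism. From the definition of the cross-connection product, $(a,a)\circ(a',a') = (a\cdot a', a'\cdot a)$ where the first coordinate product is taken in $T\mathscr{P}(X)$ (which is isomorphic to $Sing(X)$ by Theorem \ref{thmpx}) and the second in $(T\Pi(X))^{\text{op}}$. Because the linking forces both coordinates to agree and $\chi_\Gamma$ is the identity natural isomorphism, the two products must be compatible, and one gets that the product of $(a,a)$ and $(a',a')$ is $(aa',aa')$ in the $Sing(X)$-ordering; hence $\psi$ is an isomorphism of $\tilde{S}\Gamma$ onto the subset $U\Gamma \subseteq Sing(X)$. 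It then remains to observe that $U\Gamma$ is closed under the multiplication of $Sing(X)$ — this follows since $\tilde{S}\Gamma$ is a semigroup and $\psi$ transports its operation — so $U\Gamma$ is genuinely a regular subsemigroup of $Sing(X)$.

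Finally, for right reductivity I would argue as follows. The general theory tells us that for \emph{any} cross-connection the semigroup $T\mathscr{P}(X)$ acts faithfully by right translations on the left-ideal category side; concretely, $U\Gamma$ embeds into $T\mathscr{P}(X) \cong Sing(X)$ via the first coordinate, and composition in $\mathscr{P}(X)$ is honest function composition, so two elements $a,a' \in U\Gamma$ with $\rho_a = \rho_{a'}$ (equal right translations on the relevant principal left ideals) must already be equal as transformations of $X$, since $Sing(X)$ itself is right reductive (indeed $\mathscr{T}_X$ is, being a semigroup of full transformations under composition, and this passes to subsemigroups). Thus the right-regular representation of $\tilde{S}\Gamma$ is injective. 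The main obstacle I anticipate is not any single deep step but the bookkeeping in the second paragraph: one must be careful that the product inherited from $T\mathscr{P}(X) \times (T\Pi(X))^{\text{op}}$ really collapses to the single-coordinate product $aa'$ in $Sing(X)$ rather than to $a'a$ or a twisted version — this is where the identity-duality $\chi_\Gamma = 1$ from Proposition \ref{procross} is essential and must be used carefully, exactly as the conjugation computation was used in the proof of Theorem \ref{thmcrsrtx}.
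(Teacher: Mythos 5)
Your first two paragraphs essentially reproduce the paper's argument: Proposition \ref{procross} collapses $\tilde{S}\Gamma$ to the diagonal $\{(a,a) : a \in U\Gamma\}$, the reversed product in $U\Delta$ makes the operation come out as $(a,a)\circ(a',a') = (aa',aa')$, and the first-coordinate map is then an injective homomorphism onto a regular subsemigroup of $Sing(X)$. That part is fine.

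The gap is in your right reductivity step. You justify injectivity of the right-regular representation of $\tilde{S}\Gamma$ by saying that $Sing(X)$ (or $\mathscr{T}_X$) is right reductive ``and this passes to subsemigroups.'' That principle is false: right reductivity of a semigroup $S$ requires $sa = sa'$ for all $s$ \emph{in $S$ itself} to force $a = a'$, and a subsemigroup only has its own elements available as translators. For a concrete counterexample inside $Sing(X)$, fix a proper subset $A$ with $|X\setminus A|\ge 1$ and $|A|\ge 2$ and take two distinct idempotents $e,f$ with $\operatorname{Im}e = \operatorname{Im}f = A$; then $ef = e$ and $fe = f$, so these idempotents form a left zero band, which is not right reductive although it sits inside the right reductive semigroup $Sing(X)$. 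So your argument, as stated, does not establish that $\tilde{S}\Gamma \cong U\Gamma$ is right reductive. The paper closes this step differently: it uses the general fact from \cite{cross} that the first-coordinate projection $p\colon \tilde{S}\Gamma \to U\Gamma$, $(a,a)\mapsto a$, factors as $p = \rho\psi$ where $\rho$ is the right-regular representation of $\tilde{S}\Gamma$ and $\psi\colon (\tilde{S}\Gamma)_\rho \to U\Gamma$ is an isomorphism; since $p$ is an isomorphism, $\rho$ must be injective. If you want a self-contained fix instead, you must use the \emph{totality} of the ideal $\mathcal{I}$ (Lemma \ref{lemtot}): given $a \neq a'$ in $U\Gamma$ pick $x$ with $(x)a \neq (x)a'$, choose a maximal proper subset $A \ni x$ and a partition $\bar{\pi} \in v\mathcal{I}$ having $A$ as a cross-section, and let $s$ be the idempotent with image $A$ and kernel $\pi$; then $s \in U\Gamma$, $(x)s = x$, and $(x)sa \neq (x)sa'$, so $\rho_a \neq \rho_{a'}$ already within $U\Gamma$. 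Either route is needed; the appeal to hereditariness of right reductivity is not a valid substitute.
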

\begin{proof}
The cross-connection semigroup $\tilde{S}\Gamma$ defined as 
$$ \tilde{S}\Gamma = \{ ( a, b) \in U\Gamma\times U\Delta : ( a, b) \text{ is linked }\} $$ is a regular semigroup with the binary operation defined by $ ( a, b) \circ ( a', b') = ( a a', b'\ast b) $. Using Proposition \ref{procross}, we see that $ \tilde{S}\Gamma = \{ ( a, a) :  a \in U\Gamma\}$. Now since the multiplication in the semigroup $U\Delta$ is defined as $ a'\ast  a  =  a .  a' \text{ for all }  a, a' \in U\Delta$; the binary operation in $ \tilde{S}\Gamma$ is $( a, a) \circ ( a', a') = ( a a', a a')$.

Now define a mapping $\phi\colon  \tilde{S}\Gamma \to Sing(X)$ as $( a, a)\mapsto  a$. Clearly $\phi$ is well-defined and injective. Also 
$$(( a, a) \circ ( a', a'))\phi = ( a a', a a')\phi =  a a' = ( a, a)\phi \circ ( a', a')\phi.$$ Thus $\phi$ is a homomorphism. Using the regularity of $\tilde{S}\Gamma$, we see that $\tilde{S}\Gamma$ is isomorphic to a regular subsemigroup of $Sing(X)$.

Now the projection $p\colon  \tilde{S}\Gamma \to U\Gamma$ given by $( a, a) \mapsto  a$ is clearly an isomorphism. Then $p= \rho \psi$ where $\rho$ is the right regular representation of $\tilde{S}\Gamma$ and $\psi\colon (\tilde{S}\Gamma)_\rho\to U\Gamma$ is an isomorphism \cite{cross}. Hence $\rho$ will be an isomorphism and so $\tilde{S}\Gamma$ is right reductive. Hence the theorem.
\end{proof}
\begin{cor}
In particular if we take $\mathcal{I} = \Pi(X)$ and the identity functor on $\Pi(X)$ as the cross-connection $\Gamma$, then the cross-connection semigroup $\tilde{S}\Gamma $ of $(\Pi(X),\mathscr{P}(X);\Gamma)$ is isomorphic\index{isomorphism!of semigroups} to $Sing(X)$.
\end{cor}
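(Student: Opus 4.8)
The statement to prove is the corollary: taking $\mathcal{I} = \Pi(X)$ together with the identity functor $\Gamma = 1_{\Pi(X)}$ as the cross-connection, the resulting cross-connection semigroup $\tilde{S}\Gamma$ is isomorphic to $Sing(X)$. The plan is to specialize the chain of results proved in Section \ref{secrrs} to this particular total ideal and then identify the resulting semigroup concretely. First I would observe that $\Pi(X)$ is trivially a total ideal of itself: indeed, every $\bar\pi \in v\Pi(X)$ lies in $v\Pi(X)$, and by Lemma \ref{lemtot} (or directly by Lemma \ref{lemm}) for every proper maximal subset $A$ of $X$ one may choose the partition $\pi$ having $A$ as a cross-section, say $\pi = \{\,\{x,y\}\,\} \cup \{\,\{z\} : z \in A\setminus\{x\}\,\}$ where $X\setminus A = \{y\}$ and $x\in A$ is fixed. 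Hence $\mathcal{I} = \Pi(X)$ satisfies the hypothesis of Lemma \ref{lemrr1}, so $(\Pi(X),\mathscr{P}(X);\Gamma)$ with $\Gamma = j(\Pi(X),\Pi(X)) = 1_{\Pi(X)}$ is a genuine cross-connection.

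Next I would invoke Theorem \ref{thmcrstx} verbatim: for any total ideal $\mathcal{I}$, the cross-connection semigroup $\tilde{S}\Gamma$ for $\Gamma = j(\mathcal{I},\Pi(X))$ is isomorphic to a right reductive regular subsemigroup of $Sing(X)$, in fact isomorphic to $U\Gamma$ via the projection $p\colon (a,a)\mapsto a$. So it remains only to show that in the special case $\mathcal{I} = \Pi(X)$ the subsemigroup $U\Gamma$ is the whole of $Sing(X)$. By Lemma \ref{lemugamma}, $U\Gamma = \bigcup\{\, a \in Sing(X) : \mathrm{Im}\,a \subseteq A \text{ and } \bar\pi_a \subseteq \bar\pi \,:\, (A,\bar\pi) \in v\mathscr{P}(X)\times v\mathcal{I}\,\}$. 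Since here $v\mathcal{I} = v\Pi(X)$ ranges over \emph{all} non-identity partitions and $v\mathscr{P}(X)$ over all proper subsets, given any $a \in Sing(X)$ we can simply take $A = \mathrm{Im}\,a$ (a proper subset since $a$ is non-invertible, hence not onto) and $\bar\pi = \bar\pi_a$ (whose underlying partition $\pi_a$ is non-identity because $a$ is not injective); then $\mathrm{Im}\,a \subseteq A$ and $\bar\pi_a \subseteq \bar\pi$ trivially, so $a \in U\Gamma$. Thus $U\Gamma = Sing(X)$, and composing the isomorphism $\tilde{S}\Gamma \cong U\Gamma$ from Theorem \ref{thmcrstx} with this identity gives $\tilde{S}\Gamma \cong Sing(X)$.

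The only real point requiring care — and the step I expect to be the mild obstacle — is verifying that $\Pi(X)$ genuinely qualifies as a total ideal, i.e. checking the covering condition in Lemma \ref{lemtot} for \emph{every} proper maximal subset $A$; this is where one must exhibit an explicit partition having $A$ as a cross-section and confirm it is non-identity, which relies on the standing assumption $|X|\geq 2$ (ensuring $X\setminus A$ is nonempty and the amalgamated block $\{x,y\}$ has two elements). Everything else is a direct application of the already-established Lemmas \ref{lemrr1}, \ref{lemugamma} and Theorem \ref{thmcrstx}. I would also remark, as a sanity check consistent with the surrounding narrative, that this recovers the same conclusion as Theorem \ref{thmcrsrtx} in the case $\theta = 1_X$ — although strictly $1_X$ is not a non-invertible transformation, the cross-connection $\Gamma_{1_X}$ formally coincides with $j(\Pi(X),\Pi(X))$ — so the corollary is the natural ``identity'' specialization of both families of cross-connections studied in the paper.
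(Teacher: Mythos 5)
Your proposal is correct and follows essentially the same route as the paper, which states this corollary as the immediate specialization of Theorem \ref{thmcrstx}: with $\mathcal{I}=\Pi(X)$ the set $U\Gamma$ of Lemma \ref{lemugamma} is all of $Sing(X)$, and the projection $(a,a)\mapsto a$ gives the isomorphism. Your side remark about $\theta=1_X$ is fine as well (indeed the corollary is also the $\theta=1_X$ case of Theorem \ref{thmcrsrtx}, since $\theta$ there is a permutation and need not lie in $Sing(X)$), so the caveat you raise there is a non-issue.
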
 
In fact, all the right reductive subsemigroups of $Sing(X)$ with the principal left ideal category isomorphic to the powerset category $\mathscr{P}(X)$ is one of the form $\tilde{S}\Gamma$ \cite{cross}. We conclude by illustrating an example. 
\begin{ex}
If $|X| = n $; then there are $(^n_2)$ minimal partitions since any minimal partition will have only one set of two elements and all other sets singleton. Also there are $n$ maximal subsets. For each minimal partition $\pi$, there are exactly two cross-sections and hence two idempotent mappings with the given partition. For each maximal subset $A$, there are $n-1$ idempotents with image $A$. Choose at least $m=(^n_2)-(n-2)$ minimal partitions and let $\mathcal{I} \:=\:  \cup_{i=1}^{m}\{\bar{\pi}_{i} \: \cup \: \{ \bar{\pi}_{j} : \bar{\pi}_{j} \subseteq \bar{\pi}_{i}\}\}$ and do the construction as in Section \ref{secrrs}, we get the cross-connection semigroup $\tilde{S}\Gamma = T$ as a right reductive subsemigroup of $Sing(X)$. Here $\tilde{S}\Gamma $ corresponds to the subsemigroup $T$ of $Sing(X)$ given by $T= \{  a \in Sing(X) \: : \: \bar{\pi}_{ a} \in v\mathcal{I} \}$.

For example, choose $n=5$ and $X=\{ 1,2,3,4,5\}$. Then $m= (^5_2)-(5-2) = 7$. Now define $\mathcal{I}$ by choosing all minimal partitions except $\pi_1 =\{(1 2)(3)(4)(5)\}$. Then $T = \{  a \in Sing(X) : \pi_ a \neq \pi_1\}$, i.e., it contains all elements of $Sing(X)$ except 120 elements of the form $(aabcd)$ where $a,b,c,d \in X$. 
\end{ex}
\section*{Acknowledgements}
\noindent The authors express their deep gratitude to M. V. Volkov, Ural Federal University, Russia for his helpful advice, suggestions and remarks.\\
We are very grateful to the referee for the careful reading of the paper
and for his/her comments and detailed suggestions which helped us to
improve the manuscript considerably.\\
The first author acknowledges the financial support of IISER, Thiruvananthapuram in the preparation of this paper. On the final stage, his work was also supported by the Competitiveness Enhancement Program of Ural Federal University, Russia.\\
The second author acknowledges the financial support of the KSCSTE, Thiruvananthapuram (via the award of Emeritus scientist) in the preparation of this paper.\\
\bibliographystyle{unsrt}
\bibliography{aa}

\end{document}